\newtheorem{theorem}{Theorem}[section]
\theoremstyle{definition}
\newtheorem{definition}[theorem]{Definition}
\theoremstyle{remark}
\numberwithin{equation}{section}
\newcommand{\R}{{\rm I\!R}}  
\newcommand{\M}{{\rm I\!M}}  
\newcommand*\interior[1]{\mathring{#1}}
\newcommand\norm[1]{\left\lVert#1\right\rVert}
\newcommand\Nsteps{N_t}
\DeclareMathOperator*{\argmin}{argmin}
\newtheoremstyle{noIndent}
  {}
  {}
  {\itshape}
  {}
  {\bfseries}
  {.}
  {.5em}
  {}
\theoremstyle{noIndent}
\newtheorem{problem}{Problem}[section]
\begin{document}

\title[UMBERTO EMIL MORELLI et al]{Novel Methodologies for Solving the Inverse Unsteady Heat Transfer Problem of Estimating the Boundary Heat Flux in Continuous Casting Molds}
\thanks{Funded by the European Union's Horizon 2020 research and innovation programme under the Marie
Skaodowska-Curie Grant Agreement No. 765374.
It also was partially supported by the Ministry of Economy, Industry and Competitiveness through the Plan Nacional de I+D+i (MTM2015-68275-R), by the Agencia Estatal de Investigacion through project [PID2019-105615RB-I00/ AEI / 10.13039/501100011033], by the European Union Funding for Research and Innovation - Horizon 2020 Program - in the framework of European Research Council Executive Agency:  Consolidator Grant H2020 ERC CoG 2015 AROMA-CFD project 681447 "Advanced Reduced Order Methods with Applications in Computational Fluid Dynamics" and  INDAM-GNCS project "Advanced intrusive and non-intrusive model order reduction techniques and applications", 2019.}

\author{Umberto Emil Morelli\textsuperscript{1,2,3*}}
\email{umbertoemil.morelli@usc.es}

\author{Patricia Barral\textsuperscript{1,2}}
\author{Peregrina Quintela\textsuperscript{1,2}}
\author{Gianluigi Rozza\textsuperscript{3}}
\author{Giovanni Stabile\textsuperscript{3}}

\address{\textsuperscript{1}Universidade de Santiago de Compostela, Santiago de Compostela, Spain}
\address{\textsuperscript{2} Technological Institute for Industrial Mathematics (ITMATI), Santiago de Compostela, Spain}
\address{\textsuperscript{3}Scuola Internazionale Superiore di Studi Avanzati (SISSA), Trieste, Italy}

\date{}

\dedicatory{}

\begin{abstract}
    In this work, we investigate the estimation of the transient mold-slab heat flux in continuous casting molds given some thermocouples measurements in the mold plates.
    Mathematically, we can see this problem as the estimation of a Neumann boundary condition given pointwise state observations in the interior of the domain.
    We formulate it in a deterministic inverse problem setting.
    After introducing the industrial problem, we present the mold thermal model and related assumptions.
    Then, we formulate the boundary heat flux estimation problem in a deterministic inverse problem setting using a sequential approach according to the sequentiality of the temperature measurements.
    We consider different formulations of the inverse problem.
    For each one, we develop novel direct methodologies exploiting a space parameterization of the heat flux and the linearity of the mold model.
    We construct these methods to be divided into a computationally expensive offline phase that can be computed before the process starts, and a cheaper online phase to be performed during the casting process.
    To conclude, we test the performance of the proposed methods in two benchmark cases.
\end{abstract}
\keywords{Inverse Problem, Heat Transfer, Continuous Casting, Optimal Control, Data Assimilation, Boundary Condition Estimation}

\maketitle

\section{Introduction}
\label{section:introduction}
Most of the steel produced everyday worldwide is made by Continuous Casting (CC).\cite{WorldSteel2018}
Continuous casters have been around for many decades now and a long sequence of improvements have increased through the years their productivity (i.e. the casting speed) and the quality of the casted products.

To motivate and contextualize this research, we provide a brief overview on the CC process.
It starts by tapping the liquid metal from the ladle into the tundish.
In the tundish, the metal flow is regulated and smoothed.
Through the Submerged Entry Nozzle~(SEN), the metal is drained into a mold.
The role of the mold is to cool down the steel until it has a solid skin which is thick and cool enough to be supported by rollers in the secondary cooling region.

At the outlet of the mold, the metal is still molten in its inner region.
Supported by rollers, it is cooled until complete solidification by directly spraying water over it.
At the end of this secondary cooling region, the casting is completed.
This is just a brief overview on the CC process.
We refer the interested reader to Irving's monograph on the subject.\cite{Irving1993}

In this work, we focus on CC of thin slabs, i.e. slabs with rectangular cross section with thickness smaller than 70~mm and width between 1 and 1.5~m.
Thanks to the small thickness, the solidification in the slab is relatively fast, consequently the casting speed is generally high, between 7 and 14 meters per minute.

Thin slab molds are made of four different plates: two wide plates and two lateral plates, all made of copper (see Figure~\ref{fig:castingSchematic}).
In general, lateral plates can be moved or changed to modify the slab section dimensions.
The geometry of these plates is more complex than one can expect: they have drilled channels where the cooling water flows, slots in the outside face for thermal expansion, thermocouples, and fastening bolts.
To compensate the shrinkage of the slab with the cooling and minimize the gap, the molds are tapered.
Moreover, the upper portion of the mold forms a funnel to accommodate the SEN.

\begin{figure}[!htb]
    \centering
    \includegraphics[width=0.6\textwidth]{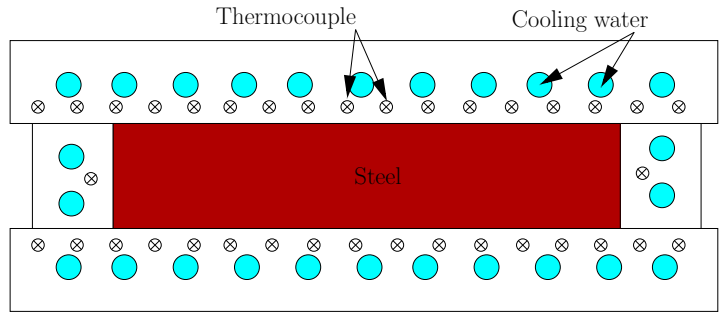}
    \caption{Schematic of a horizontal section of the mold (the casting direction is perpendicular to the image).}
\label{fig:castingSchematic}
\end{figure}

Due to the high casting speed and the related strong thermal gradient, several complex and coupled phenomena related to steel flow, solidification, mechanics, and heat transfer appear in the mold region.
This complexity makes the mold the most critical part of the CC process.
Here, safety and productivity issues must be addressed.

For example, a common issue is the sticking of the steel to the mold.
In this case, it is essential to quickly detect the problem and reduce the casting speed, otherwise it can lead to dangerous events that could force the shutdown of the caster.
Less frequent but more catastrophic events are the liquid break-out and the excessive increase of the mold temperature.
The former is due to a non-uniform cooling of the metal with the skin being so thin to break.
The latter is generally considered the most dangerous event in a casting plant.
In fact, if the mold temperature is high enough to cause the boiling of the cooling water, we have a dramatic decrease in the heat extraction.
Then, the temperature in the mold quickly rises, that could cause the melting of the mold itself.
Both these incidents are very dangerous and costly.
In fact, they generally require the shutdown of the caster, the substitution of expensive components and an extended turnaround.

For all these reasons, the early detection of problems in the mold is crucial for a safe and productive operation of continuous casters.
Their detection becoming more difficult as casting speed (thus productivity) of the casters increases.

Until now, operators faced all these problems by equipping the molds with sensors.
Among other parameters, they measure the pointwise temperature of the mold by thermocouples (see Figure~\ref{fig:castingSchematic}) and the cooling water temperature as well as its flow at the inlet and outlet of the cooling system.
On one hand, thermocouples temperatures are used to have insight of the mold temperature field.
On the other, the water temperature rise is used to approximate the heat extracted from the steel.

This approach allowed to run continuous casters for decades.
Nevertheless, it has several drawbacks: it relies on the experience of operators, gives very limited information about the heat flux at the mold-slab interface, and is customized for each geometry so it requires new effort to be applied to new designs.
So, with the always increasing casting speed of modern casters, a new and more reliable tool for analyzing the mold behavior is necessary.

According to CC operators and designers, knowing the local heat flux between mold and slab is the most important information in monitoring the mold.
Moreover, we should estimate it in real-time for the early detection of issues and a proper monitoring.
By considering the mold plates to be our domain and focusing our interest on its thermal behavior, the mold-slab heat flux can be seen as a Neumann Boundary Condition (BC) in the model.
To compute its value, we pose the following inverse problem: given the temperature measurements provided by the thermocouples, estimate the boundary heat flux at the mold-slab interface.
In a previous publication~\cite{Morelli2021}, we developed a novel methodology for the solution of this problem using a steady-state mold model.
The present work is an extension of the previous one considering the more challenging unsteady-state case.

After deriving the mold heat transfer model in Section~\ref{section:directProblem_assumptions}, we discuss in Section~\ref{section:inverse} the steel-mold heat flux estimation problem and propose novel methodologies for its solution.
Finally, we design in Section~\ref{section:numerical} some numerical benchmark test cases that we use to study the performance of the proposed inverse solvers.

\section{Mathematical Model}\label{section:directProblem_assumptions}

A detailed description of the physical phenomena that occur in the mold region of a caster can be found in our previous work\cite{Morelli2021}.
Here, we only mention that the physical phenomena happening in the interior of the mold are extremely complex and tightly coupled (thermodynamic reactions, multiphase flow, free liquid surfaces and interfaces, solidification, etc.).
Then, monitoring the casting by simulating all of them from the SEN to the secondary cooling region would be extremely complex and computationally expensive to deal with, especially for real-time applications.

According to CC operators, to monitor the mold behavior it is sufficient to know the mold-slab heat flux.
Then, given the mold plate physical properties, its geometry and the cooling water temperature, our approach is to solve an inverse problem having as input data the temperature measurements made by the thermocouples that are buried inside the mold plates.

As mentioned, the mold solid plates are the computational domain while the mold-slab heat flux is a Neumann BC on a portion of its boundary to be determined as solution of an inverse problem.
Then, the direct problem corresponds to a  model for the heat transfer in the mold plates.
In the rest of this section, we describe the mold thermal model that we use in the present investigation and the related assumptions.

In modeling the thermal behavior of the mold, we consider the following well established assumptions\cite{Morelli2021}:
\begin{itemize}
  \item The copper mold is assumed a homogeneous and isotropic solid material.
  \item The thermal expansion of the mold and its mechanical distortion are negligible.
  \item The material properties are assumed constant.
  \item The boundaries in contact with air are assumed adiabatic.
  \item The heat transmitted by radiation is neglected.
  \item The cooling water temperature is known at the inlet and outlet of the cooling system. Moreover, it is assumed to be constant in time and linear with respect to the $z$ coordinate (see Figure \ref{fig:directProblem_schematicDomain}).
  \item No boiling in the water is assumed.
\end{itemize}
We refer to our previous work\cite{Morelli2021} for the motivations related to these assumptions.

According to these assumptions, we consider in the following an unsteady-state three-dimensional heat conduction model posed on the (solid) copper mold, with a convective BC in the portion of the boundary in contact with the cooling water, a Neumann BC in the portion of the boundary in contact with the steel, and adiabatic BC in the portion of the boundary in contact with air.

After introducing some notation and the computational domain, we devote the present section to the formulation of the mold model.
As common when dealing with inverse problems, we refer to it as the direct problem.
We conclude this section by discussing its numerical discretization.

\subsection{Computational Domain and Notation}

Consider a solid domain, $\Omega$, which is assumed to be an open Lipschitz bounded subset of $\R^3$,  with smooth boundary $\Gamma$ (see Figure~\ref{fig:directProblem_schematicDomain}).
Let $\Gamma = \Gamma_{s_{in}}\cup\Gamma_{s_{ex}} \cup \Gamma_{sf}$ where $\interior{\Gamma}_{s_{in}}$, $\interior{\Gamma}_{s_{ex}}$ and $\interior{\Gamma}_{sf}$ are disjoint sets.
Moreover, given $t_f\in\R^+$, we consider the time domain $(0, t_f]$.
The Eulerian Cartesian coordinate vector is denoted by $\mathbf{x}\in \Omega$ and $\mathbf{n}(\mathbf{x})$ represents the unit normal vector that is directed outwards from $\Omega$ at point $\mathbf{x}\in \Gamma$.

\begin{figure}[htb]
    \centering
    \includegraphics[width=0.6\textwidth]{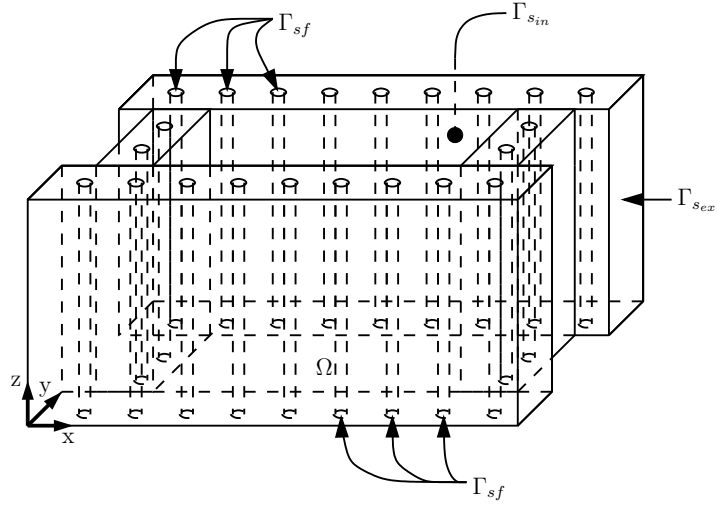}	
    \caption{Schematic of the mold domain, $\Omega$, and its boundaries (images taken from Morelli et al.\cite{Morelli2021}).}
\label{fig:directProblem_schematicDomain}
\end{figure}

In this setting, $\Omega$ corresponds to the region of the space occupied by the mold.
The interface between the mold and the cooling system is denoted by $\Gamma_{sf}$.
While $\Gamma_{s_{in}}$ is the portion of the mold boundary in contact with the solidifying steel.
Finally, we denote the remaining part of the mold boundary with $\Gamma_{s_{ex}}$.

\subsection{Direct Problem}
We shall assume all along the following assumptions on the data:
\begin{enumerate}[start=1,label={(H\arabic*)}]
        \setcounter{enumi}{0}
    \item\label{ass:unsteadyHeatConduction_conductivity} The thermal conductivity is constant and strictly positive: $k_s \in \R^+$.
    \item There is no heat source inside the mold domain.
    \item\label{ass:unsteadyHeatConduction_constantFlowDensitySpecHeat} The density and specific heat are constant and strictly positive: $\rho\in \R^+$, $C_p\in \R^+$.
    \item\label{ass:unsteadyHeatConduction_heatTransferCoeff} The heat transfer coefficient on $\Gamma_{sf}$ is constant and strictly positive: $h\in \R^+$.
    \item\label{ass:unsteadyHeatConduction_knownCoolingTemperature} The cooling water temperature, $T_f$, is known, constant in time, and belongs to $L^q(\Gamma_{sf})$.
    \item\label{ass:unsteadyHeatConduction_initialCondition} The initial temperature, $T_0$, is known and belongs to $L^2(\Omega)$.
    \item\label{ass:unsteadyHeatConduction_boundaryHeatFlux} The steel-mold heat flux, $g$, belongs to $L^r(0,t_f;L^q(\Gamma_{s_{in}}))$.
\end{enumerate}
In \ref{ass:unsteadyHeatConduction_knownCoolingTemperature}, \ref{ass:unsteadyHeatConduction_boundaryHeatFlux} we assume that $r,q \in (2, + \infty)$ and
\begin{equation}
    \frac{1}{r} + \frac{1}{q} < \frac{1}{2}.
    \label{eq:rq_inequality}
\end{equation}
Notice that it implies $r,q > 2$.

Under the assumptions \ref{ass:unsteadyHeatConduction_conductivity}-\ref{ass:unsteadyHeatConduction_boundaryHeatFlux}, we propose the following three-dimensional, unsteady-state, heat conduction model
\begin{problem}{}
Find $T$ such that
  \begin{equation}
     \rho C_{p}\frac{\partial T}{\partial t} - k_s \Delta T = 0, \quad      \text{in }\Omega \times (0,t_f],
     \label{eq:3DhcModelUnsteady_laplacian}
  \end{equation}
    with BCs and Initial Condition (IC)
    \begin{equation}
    \label{wams}
    \begin{cases}
        -k_s \nabla T \cdot \mathbf{n} = g             &    \text{on } \Gamma_{s_{in}} \times (0,t_f]\\
        -k_s \nabla T \cdot \mathbf{n} = 0             &    \text{on } \Gamma_{s_{ex}} \times (0,t_f],\\
        -k_s \nabla T \cdot \mathbf{n} = h(T -  T_f)   &    \text{on } \Gamma_{sf} \times (0,t_f],\\
          T(\cdot,0) = T_{0}                           &    \text{in } \Omega.
    \end{cases}
    \end{equation}
  \label{prob:directUnsteady}
\end{problem}

A weak solution is now defined by testing against a smooth function and formally integrating by parts.
\begin{definition}{}
    We say that a function $T \in C([0,t_f]; L^2(\Omega)) \cap L^2(0, t_f; H^1(\Omega))$ is a \textit{weak solution} of Problem~\ref{prob:directUnsteady} on $[0,t_f]$ for some $t_f>0$ if
    \begin{equation}
        \begin{aligned}
            -\rho C_{p} \int_0^{t_f} \int_{\Omega} T(\mathbf{x}, t) \frac{\partial \psi(\mathbf{x}, t)}{\partial t} d\mathbf{x}dt + k_s \int_0^{t_f} \int_{\Omega} \nabla T(\mathbf{x}, t) \nabla \psi(\mathbf{x}, t) d\mathbf{x}dt + \int_0^{t_f} \int_{\Gamma_{sf}} h T (\mathbf{x}, t) \psi(\mathbf{x}, t) d\Gamma dt = \\
            \rho C_p\int_{\Omega} T_0(\mathbf{x}) \psi(\mathbf{x}, 0) d\mathbf{x} - \int_0^{t_f} \int_{\Gamma_{s_{in}}} g(\mathbf{x}, t) \psi(\mathbf{x}, t) d\Gamma dt  + \int_0^{t_f} \int_{\Gamma_{sf}} h T_f(\mathbf{x}) \psi(\mathbf{x}, t)  d \Gamma dt, \\ 
        \end{aligned}
    \end{equation}
    for all $\psi \in H^1(0,t_f; H^1(\Omega))$ that satisfy $\psi(\cdot, t_f) = 0$ in  $\Omega$.
\end{definition}

%

Thanks to Nittka\cite{Nittka2011b} (its Theorem 2.11 and Corollary 2.13), we have
\begin{theorem}
    Let assumptions \ref{ass:unsteadyHeatConduction_conductivity}-\ref{ass:unsteadyHeatConduction_boundaryHeatFlux} and (\ref{eq:rq_inequality}) hold.
    Then, there exists a unique weak solution of Problem~\ref{prob:directUnsteady} on $[0, t_f ]$.
\end{theorem}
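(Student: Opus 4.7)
The plan is to recast Problem~\ref{prob:directUnsteady} as an instance of the abstract parabolic boundary-value problem treated by Nittka and then invoke his Theorem 2.11 and Corollary 2.13 directly. Concretely, I would associate with the PDE the sesquilinear form on $H^1(\Omega)$ given by
$a(u,v) = k_s \int_\Omega \nabla u \cdot \nabla v \, d\mathbf{x} + \int_{\Gamma_{sf}} h\, u\, v \, d\Gamma,$
which, thanks to assumptions~\ref{ass:unsteadyHeatConduction_conductivity} and~\ref{ass:unsteadyHeatConduction_heatTransferCoeff} together with the fact that $|\Gamma_{sf}|>0$, is continuous and coercive on $H^1(\Omega)$ (the Robin boundary contribution combined with a Poincaré-type trace inequality on the Lipschitz domain $\Omega$ controls the $L^2(\Omega)$ norm even in the absence of a zeroth-order term). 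This form encodes $-k_s\Delta$ together with homogeneous Neumann conditions on $\Gamma_{s_{in}}\cup\Gamma_{s_{ex}}$ and Robin conditions on $\Gamma_{sf}$, and therefore fits Nittka's framework of form-associated parabolic operators on a Lipschitz domain with mixed boundary conditions.

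Next, I would encode the inhomogeneous data as a single boundary functional: the Neumann flux $g$, extended by zero from $\Gamma_{s_{in}}$ to all of $\Gamma\setminus\Gamma_{sf}$, together with the Robin source $hT_f$ on $\Gamma_{sf}$. By assumptions~\ref{ass:unsteadyHeatConduction_knownCoolingTemperature} and~\ref{ass:unsteadyHeatConduction_boundaryHeatFlux}, both ingredients belong to $L^r(0,t_f;L^q(\Gamma))$ with $r,q>2$ and $\frac{1}{r}+\frac{1}{q}<\frac{1}{2}$, which is precisely the Stampacchia-type integrability hypothesis underlying Nittka's results. Coupled with the initial datum $T_0\in L^2(\Omega)$ from~\ref{ass:unsteadyHeatConduction_initialCondition}, this places us exactly in the setting of Nittka's Theorem 2.11, which supplies existence of a weak solution in the class $C([0,t_f];L^2(\Omega))\cap L^2(0,t_f;H^1(\Omega))$. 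Uniqueness then follows from Corollary 2.13 applied to the difference of two candidate solutions, whose boundary data and initial value both vanish.

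The main obstacle will be essentially translational bookkeeping: checking that the weak formulation stated above coincides with Nittka's variational identity (in particular, that admitting test functions $\psi\in H^1(0,t_f;H^1(\Omega))$ with $\psi(\cdot,t_f)=0$ is equivalent to his class of test functions after an integration-by-parts in time), verifying that the disjointness of the interiors $\interior{\Gamma}_{s_{in}}$, $\interior{\Gamma}_{s_{ex}}$ and $\interior{\Gamma}_{sf}$ together with the Lipschitz regularity of $\Gamma$ permits the use of the mixed-boundary-condition version of his theorem, and confirming that the constants $\rho$ and $C_p$ appearing in front of $\partial_t T$ may be absorbed either by a time rescaling $\tau = t/(\rho C_p)$ or equivalently by working with the renormalized form $\tilde a = a/(\rho C_p)$, without affecting the integrability of the data. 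Once these identifications are in place, the claimed existence and uniqueness follow immediately from the cited results of Nittka.
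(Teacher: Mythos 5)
Your proposal is correct and follows essentially the same route as the paper, which establishes this theorem purely by invoking Nittka's Theorem 2.11 and Corollary 2.13; the paper gives no further argument. Your write-up simply makes explicit the bookkeeping (the form $a(u,v)=k_s\int_\Omega\nabla u\cdot\nabla v\,d\mathbf{x}+\int_{\Gamma_{sf}}h\,u\,v\,d\Gamma$, the packaging of $g$ and $hT_f$ as boundary data in $L^r(0,t_f;L^q(\Gamma))$ under (\ref{eq:rq_inequality}), and the absorption of $\rho C_p$) that the paper leaves implicit.
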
 
Finally, we recall Theorem~3.3 in Nittka\cite{Nittka2011b}
\begin{theorem}
    Let assumptions \ref{ass:unsteadyHeatConduction_conductivity}-\ref{ass:unsteadyHeatConduction_boundaryHeatFlux} and (\ref{eq:rq_inequality}) hold.
    Then, the weak solution $T$ of Problem~\ref{prob:directUnsteady}  is in $C([0,t_f]; C(\overline{\Omega}))$.
    So, in particular, $T(\mathbf{x},t) \rightarrow T_0(\mathbf{x})$ uniformly on $\overline{\Omega}$ as $t\rightarrow 0$.
\end{theorem}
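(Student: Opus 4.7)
The plan is to combine a uniform $L^\infty$ bound with boundary H\"older regularity, and then upgrade the initial-time continuity from $L^2$ to uniform. Throughout I rely on the previous theorem, which already furnishes a unique weak solution $T \in C([0,t_f]; L^2(\Omega)) \cap L^2(0,t_f; H^1(\Omega))$; the task is only to improve this regularity to $C([0,t_f]; C(\overline{\Omega}))$.

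First I would prove $T \in L^\infty(\Omega \times (0,t_f))$ by De Giorgi/Stampacchia truncation: testing the weak formulation against $(T-k)_+$ for levels $k \ge \|T_0\|_\infty \vee \|T_f\|_\infty$, the Robin term on $\Gamma_{sf}$ contributes with the right sign and can be discarded, while the Neumann datum produces a boundary integral $\int_0^{t_f}\int_{\Gamma_{s_{in}}} g\,(T-k)_+\,d\Gamma\,dt$ that is controlled via a parabolic trace interpolation of the form
\[
\|v\|_{L^{r'}(0,t_f;L^{q'}(\partial\Omega))} \le C \|v\|_{L^\infty(0,t_f;L^2(\Omega))}^{1-\theta}\|v\|_{L^2(0,t_f;H^1(\Omega))}^{\theta},
\]
with $r',q'$ the H\"older conjugates of $r,q$. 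The hypothesis $\frac{1}{r}+\frac{1}{q}<\frac{1}{2}$ is exactly the condition under which the interpolation exponents close a Moser iteration on super-level sets, yielding an $L^\infty$ bound on $T$ in terms of the data. With the $L^\infty$ bound in hand, I would next establish parabolic H\"older regularity on $\overline{\Omega} \times [\varepsilon, t_f]$ for every $\varepsilon>0$ by the De Giorgi-Nash-Moser method: in the interior this is classical for constant-coefficient heat equations, at a point of $\Gamma_{s_{ex}}$ or $\Gamma_{s_{in}}$ one flattens the boundary locally and reflects evenly (with $g$ kept as a source with the same integrability), and at a point of $\Gamma_{sf}$ the Robin term $h(T-T_f)$ is a bounded lower-order perturbation thanks to the previous step.

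The final step is continuity down to $t=0$. The existence theorem already gives $T(\cdot,t)\to T_0$ in $L^2(\Omega)$ as $t\to 0^+$. Rerunning the boundary H\"older argument on the slab $\Omega \times (0,\varepsilon]$ produces a spatial H\"older seminorm bound uniform in $t$, and this equicontinuity combined with the $L^2$ convergence upgrades to uniform convergence on $\overline{\Omega}$; as a byproduct $T_0$ must agree almost everywhere with a continuous function, which is implicitly part of the statement. The main obstacle is the boundary H\"older estimate near the edges where $\Gamma_{s_{in}}$, $\Gamma_{s_{ex}}$ and $\Gamma_{sf}$ meet: at such corners of a merely Lipschitz domain the reflection trick breaks down, and the simple perturbation argument for the Robin part is no longer sufficient. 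This is where one must invoke the more delicate localized variational argument developed by Nittka, which treats all three boundary pieces uniformly without any smoothness assumption on $\partial\Omega$ and is the actual content of the cited theorem.
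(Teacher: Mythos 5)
The paper does not actually prove this theorem: it is quoted verbatim from Nittka (Theorem~3.3 of the cited reference), so the only fair comparison is with that result. Your first two steps are a reasonable reconstruction of the underlying strategy -- a global $L^\infty$ bound by truncation/Moser iteration in which the exponent condition $\frac{1}{r}+\frac{1}{q}<\frac{1}{2}$ is exactly what closes the iteration, followed by De Giorgi--Nash--Moser H\"older estimates up to the boundary on $\overline{\Omega}\times[\varepsilon,t_f]$ -- and you correctly flag that the genuinely hard part (Lipschitz boundary, three boundary conditions meeting along edges) is the actual content of Nittka's localized argument rather than something the reflection trick can deliver.

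The genuine gap is in your final step. You claim that rerunning the boundary H\"older argument on $\Omega\times(0,\varepsilon]$ gives a spatial H\"older seminorm bound \emph{uniform in} $t$ down to $t=0$, and that equicontinuity plus $L^2$ convergence then upgrades to uniform convergence, with continuity of $T_0$ falling out ``as a byproduct.'' This cannot work for general $T_0\in L^2(\Omega)$, which is all that hypothesis \ref{ass:unsteadyHeatConduction_initialCondition} provides: for the pure heat equation with $T_0$ the indicator of a set, the solution is smooth for every $t>0$ but its modulus of continuity degenerates as $t\downarrow 0$ and uniform convergence to $T_0$ fails, so no $t$-uniform H\"older bound on $(0,\varepsilon]$ can exist. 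Interior and boundary De Giorgi--Nash--Moser estimates on $\Omega\times[\varepsilon,t_f]$ necessarily blow up as $\varepsilon\to 0$ unless the initial datum is itself (uniformly) continuous. Continuity of $T_0$ is therefore a hypothesis that must be imposed, not a conclusion that can be derived; Nittka's Theorem~3.3 indeed requires the initial datum to lie in (the relevant closed subspace of) $C(\overline{\Omega})$, and the paper's assumption $T_0\in L^2(\Omega)$ is strictly weaker than what the quoted statement needs. Your sketch inherits this mismatch rather than resolving it, and the purported uniform-in-$t$ equicontinuity is the step that fails.
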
 

Regarding the numerical solution of Problem~\ref{prob:directUnsteady}, we use the finite volume method for its discretization.
Given a tessellation $\mathcal{T}$ of the domain, $\Omega$, we write the discrete unknown $(T_C(t))_{C\in \mathcal{T}} $ as the real vector $\mathbf{T}(t)$, belonging to $\R^{N_h}$ with $N_h = \text{size}(\mathcal{T})$.
Then, we write the spatially discretized problem as
\begin{equation}
    \rho C_p M \frac{d \mathbf{T}(t)}{d t} + A \mathbf{T}(t) = \mathbf{b}(t),\quad  t\in(0, t_f],
\label{eq:benchmark3DhcModelUnsteady_discreteDirectProblemLinSys}
\end{equation}
where $M\in \M^{N_h \times N_h}$ is the mass matrix,   $A\in \M^{N_h \times N_h}$ is the stiffness matrix and $\mathbf{b}\in \R^{N_h}$ the source term.
The value of each element of $M$, $A$ and $\mathbf{b}$ depends on the particular finite volume scheme for the discretization and the mesh used.
Since our problem is a classic diffusion problem, we refer for further details regarding the finite volume discretization to the Eymard's monograph.\cite{Eymard2000}

To discretize (\ref{eq:benchmark3DhcModelUnsteady_discreteDirectProblemLinSys}) in time, we divide the time interval of interest into $N_{T}$ regular steps
\begin{equation}
    t^0 = 0,\quad t^{n+1} = t^n + \Delta t,\quad n=0,\dots,N_T - 1,\quad \Delta t = \frac{t_f}{N_T}.
\end{equation}
From now on, we denote by $f^j$ an approximation of a given function $f(t)$ at time $t^j$.

For the time discretization, we consider the implicit Euler scheme.
It is a first-order implicit scheme.
With this discretization, (\ref{eq:benchmark3DhcModelUnsteady_discreteDirectProblemLinSys}) becomes
\begin{equation}
    (\rho C_p M + \Delta t A) \mathbf{T}^{n+1} =  \rho C_p M \mathbf{T}^n + \Delta t \mathbf{b}^{n+1}, \quad n = 0 , \dots , N_T - 1.
    \label{eq:3DhcModelUnsteady_discreteEuler}
\end{equation}
Notice that, thanks to hypotheses \ref{ass:unsteadyHeatConduction_conductivity}-\ref{ass:unsteadyHeatConduction_boundaryHeatFlux}, matrices $A$ and $M$ are time independent.

\section{Inverse Problem}
\label{section:inverse}
In this section, we discuss the formulation and solution of the boundary heat flux estimation problem.
We state it in an inverse problem setting using data assimilation.
We begin this section with a literature survey, then we discuss the mathematical formulation of the problem and, finally, the methodology that we developed for its solution.

\subsection{State of the Art}

The literature on inverse heat transfer problems is vast\cite{Ling2003, Loulou2006, Jin2007, Huang1996}.
We refer to Alifanov's\cite{Alifanov1988}, Orlande's\cite{Orlande2010}, Beck and Clair's\cite{Beck1985}, and Chang's\cite{Chang2017} works for a detailed review.
In the literature, other researchers also investigated the particular problem of computing the mold-slab heat flux from temperature measurements in the mold\cite{Ahin2006, Ranut2012, Udayraj2017, Mahapatra1991}.
From a mathematical point of view, the present problem fits in the framework of estimating a Neumann BC (the heat flux) having as data pointwise measurements of the state (the temperature) inside the domain.
Such problems were also addressed in investigations not related to heat transfer\cite{Raymond2013, Vitale2012, Huang1998}.

The story of inverse heat transfer problems started in the 50s when aerospace engineers were interested in knowing the thermal properties of heat shields and heat fluxes on the surface of space vehicles during re-entry.
The first approach was purely heuristic, then in the 60s and 70s, researchers moved to a more mathematically formal approach.
In fact, most of the regularization theory that we use nowadays to treat ill-posed problems was developed during these years\cite{Alifanov1988, Tikhonov1963, Beck1968, Beck1970, Chen1976}.

The first approach for estimating the boundary heat flux in CC molds was to select a heat flux profile, and then by trial and error adapt it to match the measured temperatures\cite{Mahapatra1991}.
Pinhero et al.\cite{Pinheiro2000} were the first to use an optimal control framework and regularization methods.
They used a steady-state version of the 2D mold model proposed by Samarasekera and Brimacombe\cite{Samarasekera1982} and parameterized the heat flux with a piecewise constant function.
Finally, they used Tikhonov's regularization for solving the inverse problem and validated the results with experimental measurements.
A similar approach was used more recently by Rauter et al.\cite{Ranut2012, Rauter2008, Ranut2011}.
They estimated the heat flux transferred from the solidifying steel to the mold wall both in a 2D and 3D domain.
They used a steady-state heat conduction model for the mold and parameterized the heat flux with a piecewise linear profile in 2D and symmetric cosine profile in 3D.
For the solution of the inverse problem, they used the Conjugate Gradient Method (CGM) and a mixed GA-SIMPLEX algorithm\cite{Nelder1965} in 2D while in 3D they only used the GA-SIMPLEX algorithm.
Their results were also tested with experimental data.

Using a 3D unsteady-state heat conduction model in the strand and the mold with a Robin condition at the mold-strand interface, Hebi et al.\cite{Man2004, Hebi2006} attempted to estimate the solidification in CC round billets.
Similarly to the present work, they looked for the heat transfer coefficient that minimizes a distance between measured and computed temperatures at the thermocouples' points.
Assuming the heat transfer coefficient to be piecewise constant, they iteratively adapted each piece to match the measured temperature.
However, in the validation with plant measurements, they did not obtain good agreement.
A similar approach was used by Gonzalez et al.\cite{Gonzalez2003} and Wang et al.\cite{Wang2016, ZhangWang2017, Hu2018, Tang2012}, the latter using a Neumann condition at the mold-strand interface.

Udayraj et al.\cite{Udayraj2017} applied the conjugate gradient method with adjoint problem for the solution of the steady-state 2D mold-slab heat flux estimation problem.
This methodology was first proposed by Alifanov\cite{Alifanov1988} for the regularization of boundary inverse heat transfer problems without the need of parameterizing the heat flux.
However, as we also proved in our previous work~\cite{Morelli2021}, this method underestimates the heat flux away from the measurements.
To overcome this issue, Udayraj et al. proposed to average the computed heat flux at each step and use the uniform averaged value as initial estimation for the following step.
However, the obtained results were not satisfying.

Since the real-time requirement is common in industrial applications, real-time methodologies for the solution of these problems have already been investigated in the literature.
In particular, Videcoq et al.\cite{Videcoq2008} used a Branch Eigenmodes Reduced Model\cite{Videcoq2006} for the real-time identification of the heat source strength variations in a 3D  non-linear inverse heat conduction problem.
Later, for solving the same problem, Girault et al.\cite{Girault2010} used the  Modal Identification Method\cite{Girault2005} for generating the reduced model.
Finally, Aguado et al.\cite{Aguado2015} coupled classical harmonic analysis with recent model order reduction techniques (Proper Generalized Decomposition) to solve in real-time the transient heat equation at monitored points, also showing the applicability of their method to inverse problems.

To conclude, also deep learning techniques were investigated.
Wang and Yao\cite{WangYao2011} used the inverse problem solution technique developed by Hebi et al.\cite{Hebi2006} and a set of experimental temperature measurements to train a Neural Network (NN) for on-line computation.
Similarly, Chen et al.\cite{Chen2014} used the fuzzy inference method for estimating the mold heat flux.
In both works, they modeled the mold with a 2D steady-state heat conduction model in the solid and parameterized the boundary heat flux.

Our contribution to the literature is the  development of novel methods for solving the unsteady-state 3D inverse heat transfer problem in CC molds that exploits the parameterization of the heat flux.
We propose different novel direct methodologies that exploit an offline-online decomposition.
In fact, we divide them in a computationally expensive offline phase and an online phase whose computational cost is much smaller.
The advantage is that we compute offline phase once and for all before starting the casting process.
Then, while the machine is running, we only need to solve the cheap online phase.
Moreover, in this work, we design some benchmark cases for this application, and we use them to test the performances of the proposed methodologies.

\subsection{Inverse Problem Formulation}

Before proceeding with the mathematical formulation of the inverse problem, we do some technical considerations that will guide us in the process.
First, the thermocouples measure the temperature at the sampling frequency $f_{samp}$.
This sampling frequency is typically of $1$~Hz and we will assume this value all along this investigation (notice that different values of $f_{samp}$ are compatible with the following discussion).
Second, every sampling period $T_{samp} = 1 / f_{samp} = 1$ s, the thermocouples provide a new set of measurements, so we have a regular sequence of measurements in time.

That said, we consider the problem of estimating the heat flux, $g$, on $\Gamma_{s_{in}}$, in between the last acquired measurement instant and the previous one.
In this way, we follow the sequentiality of the measured data in our solution procedure according to the real-time purpose of this research.

We introduce the following notation.
Let $\Psi:=\{\mathbf{x}_1, \mathbf{x}_2 , \dots, \mathbf{x}_P \}$ be a collection of points in $\Omega$ and  $\Upsilon := \{\tau^0,\tau^1, \dots, \tau^{P_t}\}$ a collection of points in $[0,t_f]$ such that $\tau^k = t^{k\Nsteps}$ (see Figure~\ref{fig:timeline}).
According to the introduced sequential approach, we consider the following restriction of Problem~\ref{prob:directUnsteady} to $(\tau^{k-1}, \tau^k]$, $1 \leq k \leq P_t$, as direct problem
\begin{problem}{}
  Let $1 \leq k \leq P_t$ and $g^k(\mathbf{x},t)$ be a given heat flux on $\Gamma_{s_{in}} \times (\tau^{k-1},\tau^k]$. Find  $T^k$ such that
  \begin{equation}
     \rho C_p\frac{\partial T^k}{\partial t} - k_s \Delta T^k = 0, \quad      \text{in }\Omega \times (\tau^{k-1}, \tau^k],
      \label{eq:sequential_directUnsteady_EQ1}
  \end{equation}
  with BCs and IC
    \begin{equation}
    \label{wams}
    \begin{cases}
      -k_s \nabla T^k \cdot \mathbf{n} = g^k              & \text{on } \Gamma_{s_{in}} \times (\tau^{k-1}, \tau^k],\\
      -k_s \nabla T^k \cdot \mathbf{n} = 0                & \text{on } \Gamma_{s_{ex}} \times (\tau^{k-1}, \tau^k],\\
      -k_s \nabla T^k \cdot \mathbf{n} = h(T^k -  T_f)    & \text{on } \Gamma_{sf}     \times (\tau^{k-1}, \tau^k],\\
      T^k(\cdot,\tau^{k-1}) = T^{k-1}(\cdot, \tau^{k-1})  & \text{in } \Omega,
    \end{cases}
    \end{equation}
  \label{prob:sequential_directUnsteady}
\end{problem}
where $T^0(\cdot, \tau^{0}) = T_0$, being $T_0$ the initial temperature. 

\begin{figure}[!htb]
    \centering
    \includegraphics[width=0.6\textwidth]{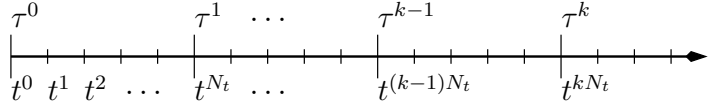}
    \caption{Time line for the inverse problem.}
\label{fig:timeline}
\end{figure}

So basically, we are dividing the time domain into chunks going from one measurement time to the next one in a way that facilitates the definition of the inverse problems below.
Before formulating it, we introduce some further notation.
We define the application $(\mathbf{x}_i, \tau^k) \in \Psi \times \Upsilon \rightarrow \hat{T}(\mathbf{x}_i, \tau^k)\in \R^+$, $1\leq i \leq P, 1 \leq k \leq P_t$, $\hat{T}(\mathbf{x}_i, \tau^k)$ being the experimentally measured temperature at $(\mathbf{x}_i, \tau^k) \in \Psi \times \Upsilon$.
Moreover, to simplify the notation, and if there is no room for error, we denote
\begin{equation}
    \hat{T}^k(\mathbf{x}_i) := \hat{T}(\mathbf{x}_i, \tau^k), \quad 1 \leq i \leq P, \ 1 \leq k \leq P_t,
\end{equation}
and we let $T^k[g]$ represent the solution of Problem~\ref{prob:sequential_directUnsteady} corresponding to heat flux $g$ on $\Gamma_{s_{in}} \times (\tau^{k-1}, \tau^k]$.

At each measurement interval $k$, $1 \leq k \leq P_t$, we propose an iterative procedure, assuming that, for $k \geq 1$, $g^l$ and $T^l[g^l]$, $0 \leq l \leq k-1$, have been computed.
Using a least square, deterministic approach, we state two different inverse problems for Problem~\ref{prob:sequential_directUnsteady}.
In the first one, we consider as functional to be minimized a distance between the measured and computed temperatures at the thermocouples.
Then, we state it as
\begin{problem}{(\textbf{Inverse})}
    Being $g^l$ and $T^l[g^l]$, $1 \leq l \leq k-1$, known, and given the temperature measurements $\hat{T}^k(\mathbf{x}_i)$, $1 \leq i \leq P$, find $g^k \in C(\tau^{k-1}, \tau^k; L^q(\Gamma_{s_{in}}))$ which minimizes the functional
  \begin{equation}
    S_1^k[g^k]=\frac{1}{2} \sum_{i=1}^{P} [T^k[g^k](\mathbf{x}_{i}, \tau^k) - \hat{T}^k(\mathbf{x}_i)]^2.
      \label{eq:unsteadyInverseFunctional1}
  \end{equation}
\label{prob:sequentialUnsteadyInverseProblem}
\end{problem}
Here, we denote $T^0[g^0]=T_0$.

The second inverse problem that we consider includes in the cost functional the $L^2$-norm of the heat flux.
Thus, we write it as
\begin{problem}{(\textbf{Inverse})}
    Being $g^l$ and $T^l[g^l]$, $1 \leq l \leq k-1$, known, and given the temperature measurements $\hat{T}^k(\mathbf{x}_i)$, $1 \leq i \leq P$, find $g^k \in C(\tau^{k-1}, \tau^k; L^q(\Gamma_{s_{in}}))$ which minimizes the functional
  \begin{equation}
      S_2^k[g^k]=\frac{1}{2} \sum_{i=1}^{P} [T^k[g^k](\mathbf{x}_{i}, \tau^k) - \hat{T}^k(\mathbf{x}_i)]^2 + p_g \langle g^k(\tau^k), g^k(\tau^k) \rangle_{L^2(\Gamma_{s_{in}})},
      \label{eq:sequentialUnsteadyInverseProblem_heatNorm_functional}
  \end{equation}
\label{prob:sequentialUnsteadyInverseProblem_heatNorm}
\end{problem}
where $p_g\ \left[ \text{K\textsuperscript{2} / W\textsuperscript{2}} \right]$ is a weight applied to the heat flux norm.

\subsection{Inverse Solver for $S_1^k$}\label{sec:sequentialUnsteadyInverseSolver}

In this section, we discuss a novel methodology for solving Problem~\ref{prob:sequentialUnsteadyInverseProblem}.
In particular, we mimic the methodology developed by the authors for a steady-state mold model~\cite{Morelli2021}, expanding it to the unsteady case.

We exploit a suitable parameterization of the heat flux, $g^k$.
To properly parameterize it, we start by considering that we want to parameterize an unknown function $g^k$ in $L^r(\tau^{k-1}, \tau^k; L^q(\Omega))$, $1 \leq k \leq P_t$.
Then, we notice that in thin slab casting molds, the thermocouples are all located few millimeters inward from $\Gamma_{s_{in}}$.
All together they form a uniform 2D grid on a surface parallel to the $\Gamma_{s_{in}}$ boundary (see Figure~\ref{fig:unsteadyAnalyticalBenchmarkDomainSchematic}(b)).
Thus, a possible choice for the parameterization of $g^k$ is to use Radial Basis Functions (RBFs) centered at the projections of the thermocouples points on $\Gamma_{s_{in}}$\cite{Buhmann2003}.
By using this parameterization, we end up having as many basis functions as thermocouples.
Note that the methodology is very well adapted to the application in use.

In particular, we parameterize $g^k$ by Gaussian RBFs which allow us to separate the time and space dependence.
These are continuous functions with global support in $\Gamma_{s_{in}}$.
However, the following discussion can be applied to other basis functions.

The parameterization of the boundary heat flux reads (see Prando's appendix\cite{Prando2016})
\begin{equation}
    g^k(\mathbf{x}, t) \approx \mathscr{g}^k(\mathbf{x}, t) =  \sum_{i=1}^{P} g_i^k(t) \phi_{i}(\mathbf{x}), \quad \text{ for } t\in(\tau^{k-1}, \tau^k],
    \label{eq:parametrizedSequentialHeatFlux}
\end{equation}
where the $\phi_i(\mathbf{x})$ are $P$ known basis functions, and the $g_i^k(t)$ are the respective time dependent unknown weights.

To define the RBFs, let $\pmb{\xi}_i, 1 \leq i \leq P$, be the projection of the point $\mathbf{x}_i \in \Psi$ on $\Gamma_{s_{in}}$, i.e. such that
\begin{equation}
  \pmb{\xi}_i = \argmin_{\pmb{\xi}\in\Gamma_{s_{in}}} \left\lVert \mathbf{x}_i - \pmb{\xi}  \right\rVert_2, \quad \mathbf{x}_i\in \Psi.
    \label{eq:themocouplesProjectionOnGammaIn}
\end{equation}
By centering the RBFs in these points, their expression is
\begin{equation}
    \phi_j(\mathbf{x}) = e^{- \left( \eta \left\lVert \mathbf{x} - \pmb{\xi}_j  \right\rVert_2 \right)^2},\quad \text{ for } j=1,2,\dots,P,
    \label{eq:GaussianRBF}
\end{equation}
where $\eta$ is the shape parameter of the Gaussian basis.
By increasing (decreasing) its values, the radial decay of the basis slows down (speeds up).

In this work, we explore two different approaches to the time parameterization. 
In the first one, we consider $g_i^k$ independent of time
\begin{equation}
    g_i^k(t)=w_i^k, \quad \text{ for } t\in(\tau^{k-1}, \tau^k],  1 \leq i \leq P,
    \label{eq:heatFluxConstTimeBasis}
\end{equation}
being $w_i^k$ real numbers. 
In this way, the heat flux is assumed to be piecewise constant, i.e. constant between consecutive measurement instants.

The second approach is to consider the heat flux to be continuous piecewise linear in $(0,t_f]$, being a polynomial of degree 1 between the sampling times. 
Then, we assume the weights $g_i^k(t)$ to be linear in time in the interval $(\tau^{k-1}, \tau^k]$.
Moreover, in this second case, the following continuity is assumed
\begin{equation}
    g_i^k(t)|_{t\downarrow \tau^{k-1}}=g_i^{k-1}(t)|_{t\uparrow \tau^{k-1}}.
\end{equation}
In turn, we characterize $g_i^k(t)$ as
\begin{equation}
    g_i^k(t) = w_i^{k-1} + (t - \tau^{k-1}) \frac{w^k_i - w_i^{k-1}}{\tau^k - \tau^{k-1}}, \quad \text{ for } t\in(\tau^{k-1}, \tau^k]. 
    \label{eq:heatFluxLinearTimeBasis}
\end{equation}

Notice that by doing parameterization (\ref{eq:parametrizedSequentialHeatFlux}), we change the problem from estimating a function in an infinite dimensional space at each time interval $(t^{(k - 1) N_t}, t^{k N_t}]=(\tau^{k-1}, \tau^k]$,  to estimating the vector $\mathbf{w}^k =( w_1^k, w_2^k ,\dots, w_P^k)^T$ in $\R^P$, for each $1 \leq k \leq  P_t$.

Now, at each time interval $(\tau^{k-1},\tau^k]$, the objective of the inverse problem is to determine $\mathbf{w}^k$ which identifies $\mathscr{g}^k$ once the elements of the basis $\phi_{i}$, $i = 1,2,\dots,P$ are fixed.
We state the inverse problem as
\begin{problem}{(\textbf{Inverse})}
    Given the temperature measurements $\hat{T}(\Psi, \tau^k)$, find $\hat{\mathbf{w}}^k\in\R^{P}$, $1 \leq k \leq P_t$, which minimizes the functional
  \begin{equation}
     S_1^k[\mathbf{w}^k]=\frac{1}{2}  \sum_{i=1}^{P} [T^k[\mathbf{w}^k](\mathbf{x}_{i}, \tau^k) - \hat{T}^k(\mathbf{x}_i)]^2,
      \label{eq:costFunction}
  \end{equation}
\label{prob:benchmark3DhcModelUnsteady_inverseProblem_parametrizedBC}
\end{problem}
where if there is not room for confusion $T^k[\mathbf{w}^k]$ denotes the temperature $T^k[\mathscr{g}^k]$, with $\mathscr{g}^k$ defined as in (\ref{eq:parametrizedSequentialHeatFlux}) and $g_i^k(t)$  given by (\ref{eq:heatFluxConstTimeBasis}) or (\ref{eq:heatFluxLinearTimeBasis}).

For a later use, we define the general vector $\mathbf{a}^k \in \R^P$ as the vector of the values of a general field $a(\mathbf{x},t)$ at the measurement points and at the measurement time $\tau^k$, such as
\begin{equation}
    (\mathbf{a}^k)_i = a(\mathbf{x}_i, \tau^k).
    \label{eq:unsteadyGeneralVector}
\end{equation}
Moreover, given $\mathbf{w}^k$, we define the residual vector $\mathbf{R}^k[\mathbf{w}^k] \in \R^{P}$ as
\begin{equation}
    (\mathbf{R}^k[\mathbf{w}^k])_{i} := (\mathbf{T}^k[\mathbf{w}^k])_{i} - (\hat{\mathbf{T}}^k)_{i},\quad i = 1,2,\dots,P.
    \label{eq:inverseUnsteady_residualDefinition}
\end{equation}
Thanks to (\ref{eq:inverseUnsteady_residualDefinition}), we rewrite the cost functional (\ref{eq:costFunction}) as
\begin{equation}
  S_1^k[\mathbf{w}^k]=\frac{1}{2} \mathbf{R}^k[\mathbf{w}^k]^T \mathbf{R}^k[\mathbf{w}^k].
\end{equation}

To minimize it, we write the critical point equation
\begin{equation}
    \frac{\partial S_1^k[\hat{\mathbf{w}}^k]}{\partial w_{j}^k} = \sum_{i=1}^{P} (R^k[\hat{\mathbf{w}}^k])_{i} \frac{\partial (\mathbf{T}^k[\hat{\mathbf{w}}^k])_{i}}{\partial w_j^k} = 0,\text{ for } j=1,2,\dots,P.
  \label{eq:benchmark3DhcModelUnsteadySequential_criticalPoint}
\end{equation}
Thus, for each $k$, $1 \leq k \leq P_t$,  the solution of this equation will provide the weights vector $\hat{\mathbf{w}}^k$ corresponding to a critical point of $S_1^k$.

To explicitly obtain from (\ref{eq:benchmark3DhcModelUnsteadySequential_criticalPoint}) an equation for the weights that minimize our functional $S^k_1$, we exploit the linearity of Problem~\ref{prob:sequential_directUnsteady}.
To derive it, we consider separately the piecewise constant (\ref{eq:heatFluxConstTimeBasis}) and the piecewise linear (\ref{eq:heatFluxLinearTimeBasis}) cases.

\subsubsection{Piecewise Constant Approximation of the Heat Flux}

Suppose to have the solutions to the following auxiliary problems
\begin{problem}{}
    For each $i$, $1 \leq i \leq P$, find $T_{\phi_{i}}$ such that
  \begin{equation}
      \rho C_p \frac{\partial T_{\phi_{i}}}{\partial t} - k_s \Delta T_{\phi_{i}} = 0, \quad      \text{in }\Omega \times (\tau^{0}, \tau^1],
      \label{eq:sequential_directUnsteady2_EQ1_const}
  \end{equation}
  with BCs and IC
    \begin{equation}
    \label{wams1}
    \begin{cases}
      -k_s \nabla T_{\phi_{i}} \cdot \mathbf{n} = \phi_{i}          & \text{on } \Gamma_{s_{in}} \times (\tau^{0}, \tau^1],\\
      -k_s \nabla T_{\phi_{i}} \cdot \mathbf{n} = 0                 & \text{on } \Gamma_{s_{ex}} \times (\tau^{0}, \tau^1],\\
      -k_s \nabla T_{\phi_{i}} \cdot \mathbf{n} = hT_{\phi_{i}}     & \text{on } \Gamma_{sf} \times   (\tau^{0}, \tau^1] ,\\
      T_{\phi_{i}}(\cdot,\tau^0) = 0                                & \text{in } \Omega.
    \end{cases}
    \end{equation}
    \label{prob:sequential_basis}
\end{problem}
\begin{problem}{}
    For each $k$, $1 \leq k \leq P_t$, find $T_{IC}^k$ such that
  \begin{equation}
      \rho C_p \frac{\partial T_{IC}^k}{\partial t} - k_s \Delta T_{IC}^k = 0, \quad      \text{in }\Omega \times (\tau^{k-1}, \tau^k],
      \label{eq:TICunsteady_EQ1}
  \end{equation}
  with BCs and IC
    \begin{equation}
    \label{wams2}
    \begin{cases}
      - k_s \nabla T_{IC}^k \cdot \mathbf{n} = 0                                & \text{on } (\Gamma_{s_{in}} \cup \Gamma_{s_{ex}}) \times (\tau^{k-1}, \tau^k],\\
      - k_s \nabla T_{IC}^k \cdot \mathbf{n} = h \left( T_{IC}^k - T_f \right)  & \text{on } \Gamma_{sf} \times (\tau^{k-1}, \tau^k],\\
      T_{IC}^k(\cdot,\tau^{k-1}) = T^{k - 1}(\cdot,\tau^{k-1})                  & \text{in } \Omega,
    \end{cases}
    \end{equation}
\label{prob:Tic}
\end{problem}
with
\begin{equation}
    T_{IC}^1(\cdot,\tau^{0}) = T_0.
    \label{eq:TICunsteady_EQ1_IC}
\end{equation}

Notice that Problem~\ref{prob:sequential_basis} does not depend on the measurement instants index $k$.
Then, we define it only in the first interval $(\tau^{0}, \tau^1]$ and,  if needed, translate it such as
\begin{equation}
    T_{\phi_i}^k (\mathbf{x}, t) = T_{\phi_i}(\mathbf{x}, t - \tau^{k - 1}), \text{ for } t \in (\tau^{k-1}, \tau^k].
    \label{eq:basisTranslation}
\end{equation}

We can now state
\begin{theorem}
    Given $T_{IC}^k$ and $\mathbf{w}^k, 1\leq k \leq P_t$ (and so $\mathscr{g}^k$ defined as (\ref{eq:parametrizedSequentialHeatFlux})) and $T_{\phi_{i}}, 1 \leq i \leq P$,  the function defined as
    \begin{equation}
        T^k[\mathscr{g}^k] = \sum_{i=1}^{P} g_i^k T^k_{\phi_{i}} + T_{IC}^k,
      \label{eq:directProbDecomposition_sequential_const}
    \end{equation}
    is the solution to Problem~\ref{prob:sequential_directUnsteady} associated with the heat flux  $\mathscr{g}(x,t)$ which in each measurement subinterval $(\tau^{k-1}, \tau^k]$ coincides with $\mathscr{g}^k(x,t)$ given by (\ref{eq:parametrizedSequentialHeatFlux}) with $g_i^k(t)$ as in (\ref{eq:heatFluxConstTimeBasis}).
    \label{teo:decomposition_const}
\end{theorem}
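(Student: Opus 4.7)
The plan is a direct superposition argument based on the linearity of Problem~\ref{prob:sequential_directUnsteady} with respect to the heat flux data and the initial condition. Since in the piecewise-constant-in-time setting (\ref{eq:heatFluxConstTimeBasis}) the weights $g_i^k = w_i^k$ are constants on $(\tau^{k-1},\tau^k]$, the candidate function
\[
  T^k := \sum_{i=1}^{P} g_i^k\, T^k_{\phi_i} + T_{IC}^k
\]
is a finite linear combination with time-independent coefficients, and it is enough to verify that it satisfies the PDE, the three boundary conditions, and the initial condition of Problem~\ref{prob:sequential_directUnsteady}; uniqueness then follows from the well-posedness result invoked earlier (Theorem~2.5).

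First I would check the PDE (\ref{eq:sequential_directUnsteady_EQ1}): each $T^k_{\phi_i}$ (via the translation (\ref{eq:basisTranslation}) of Problem~\ref{prob:sequential_basis}) and $T^k_{IC}$ (Problem~\ref{prob:Tic}) individually solves $\rho C_p \partial_t u - k_s \Delta u = 0$ on $\Omega\times(\tau^{k-1},\tau^k]$, so linearity with constant coefficients yields the same equation for $T^k$. Next I would verify the initial condition at $t=\tau^{k-1}$: by construction $T^k_{\phi_i}(\cdot,\tau^{k-1}) = 0$ and $T_{IC}^k(\cdot,\tau^{k-1}) = T^{k-1}(\cdot,\tau^{k-1})$, so
\[
  T^k(\cdot,\tau^{k-1}) = \sum_{i=1}^{P} g_i^k \cdot 0 + T^{k-1}(\cdot,\tau^{k-1}) = T^{k-1}(\cdot,\tau^{k-1}),
\]
as required. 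The Neumann conditions on $\Gamma_{s_{in}}$ and $\Gamma_{s_{ex}}$ are equally transparent: summing the fluxes gives $-k_s\nabla T^k\cdot\mathbf{n} = \sum_{i=1}^{P} g_i^k \phi_i = \mathscr{g}^k$ on $\Gamma_{s_{in}}$ by (\ref{eq:parametrizedSequentialHeatFlux}), and $0$ on $\Gamma_{s_{ex}}$.

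The only step requiring a little attention is the Robin condition on $\Gamma_{sf}$, because $T_f$ appears only in Problem~\ref{prob:Tic} and not in Problem~\ref{prob:sequential_basis}. Computing
\[
  -k_s\nabla T^k\cdot\mathbf{n} = \sum_{i=1}^{P} g_i^k\, h\, T^k_{\phi_i} + h(T^k_{IC} - T_f) = h\Bigl(\sum_{i=1}^{P} g_i^k T^k_{\phi_i} + T_{IC}^k\Bigr) - hT_f = h(T^k - T_f),
\]
which is exactly the Robin BC in Problem~\ref{prob:sequential_directUnsteady}. This nonhomogeneous/homogeneous split is the one structural subtlety of the decomposition; designing Problem~\ref{prob:sequential_basis} with the purely homogeneous Robin term $hT_{\phi_i}$ is precisely what makes the superposition compatible with the inhomogeneity $hT_f$ carried by $T_{IC}^k$. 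Having verified all BCs, the IC, and the PDE, $T^k$ is a weak solution of Problem~\ref{prob:sequential_directUnsteady} with data $\mathscr{g}^k$, and by the uniqueness part of Theorem~2.5 it coincides with $T^k[\mathscr{g}^k]$, concluding the proof.
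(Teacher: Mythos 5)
Your proposal is correct and follows essentially the same route as the paper's proof: direct substitution of the ansatz into the PDE, the three boundary conditions, and the initial condition, using linearity and the time-independence of the coefficients $g_i^k=w_i^k$, with the Robin condition closing exactly because the inhomogeneity $hT_f$ is carried by $T_{IC}^k$ alone while the $T_{\phi_i}$ satisfy the homogeneous Robin condition. The only cosmetic differences are that you explicitly invoke uniqueness of the weak solution at the end (the paper leaves this implicit) and that the paper phrases the initial-condition check as an induction over $k$, separating $k=1$ from $k>1$.
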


\begin{proof}
    Substituting (\ref{eq:directProbDecomposition_sequential_const}) in (\ref{eq:sequential_directUnsteady_EQ1}), taking into account (\ref{eq:heatFluxConstTimeBasis}) so $g_i^k$ is constant in $(\tau^{k-1},\tau^k]$, and considering (\ref{eq:sequential_directUnsteady2_EQ1_const}) and (\ref{eq:TICunsteady_EQ1}), we have
    \begin{equation}
      \begin{aligned}
          \rho C_{p} \frac{\partial \left(\sum_{i=1}^P  g_i^k T^k_{\phi_i} + T_{IC}^k \right)}{\partial t} - k_s \Delta \left(\sum_{i=1}^P  g_i^k T^k_{\phi_i} + T_{IC}^k \right) =  \sum_{i=1}^P  g_i^k \left(\rho C_{p} \frac{\partial T^k_{\phi_i}}{\partial t} - k_s \Delta T^k_{\phi_i} \right) + \\
          \rho C_{p} \frac{\partial T_{IC}^k}{\partial t} - k_s \Delta T_{IC}^k  = 0 \text{ in }\Omega \times (\tau^{k-1}, \tau^k].\\
      \end{aligned}
    \end{equation}
    Similarly, for the BCs we have
    \begin{equation}
        - k_s \nabla \left(\sum_{i=1}^{P} g_i^k T^k_{\phi_i} + T_{IC}^k \right) \cdot \mathbf{n} = \sum_{i=1}^{P} g_i^k \left( - k_s \nabla T^k_{\phi_{i}} \cdot \mathbf{n} \right) =  \sum_{i=1}^{P} g_i^k \phi_{i} = \mathscr{g}^k,\text{ on } \Gamma_{s_{in}} \times (\tau^{k-1}, \tau^k], 
    \end{equation}
    \begin{equation}
        - k_s \nabla \left( \sum_{i=1}^{P} g_i^k  T^k_{\phi_i} + T_{IC}^k \right) \cdot \mathbf{n} = 0,  \text{ on }  \Gamma_{s_{ex}} \times (\tau^{k-1}, \tau^k],
    \end{equation}
    and
    \begin{equation}
            - k_s \nabla \left( \sum_{i=1}^{P} g_i^k  T^k_{\phi_i} + T_{IC}^k \right) \cdot \mathbf{n} = h \left[ \sum_{i=1}^{P} g_i^k T^k_{\phi_{i}} + T_{IC}^k - T_f \right] = h \left( T^k[\mathscr{g}^k] - T_f \right),  \text{ on } \Gamma_{sf} \times (\tau^{k-1}, \tau^k].
    \end{equation}
    With respect to the IC, at each interval we must proceed by induction.
    For $k=1$, thanks to (\ref{eq:TICunsteady_EQ1_IC}),
    \begin{equation}
        T^1[g^1](\cdot,\tau^0) = T^0(\cdot).
    \end{equation}
    For $k>1$,  taking into account (\ref{wams1}) and (\ref{wams2})
    \begin{equation}
            T^k[g^k](\cdot, \tau^{k-1}) = \sum_{i=1}^{P}  g_i^{k} T^k_{\phi_i}(\cdot, \tau^{k-1}) + T_{IC}^k(\cdot, \tau^{k-1}) = T^{k-1}(\cdot,\tau^{k-1}), \text{ in } \Omega.
    \end{equation}
    This ends the proof.
\end{proof}

\paragraph{Solving the minimization problem for $S_1^k$ with piecewise constant approximation of the heat flux}

Thanks to (\ref{eq:basisTranslation}) and (\ref{eq:directProbDecomposition_sequential_const}), (\ref{eq:benchmark3DhcModelUnsteadySequential_criticalPoint}) can be written as
\begin{equation}
    \begin{aligned}
        \sum_{i=1}^{P} (R^k[\hat{\mathbf{w}}^k])_{i} \frac{\partial \left( \sum_{l=1}^{P} g_l^k \mathbf{T}_{\phi_{l}} + \mathbf{T}_{IC}^k \right)_{i}}{\partial w_j^k} = \sum_{i=1}^{P} (R^k[\hat{\mathbf{w}}^k])_{i} \frac{\partial \left( \sum_{l=1}^{P} w_l^k \mathbf{T}_{\phi_{l}} + \mathbf{T}_{IC}^k \right)_{i}}{\partial w_j^k} =& \\
        \mathbf{R}^k[\hat{\mathbf{w}}^k]^T \left(\mathbf{T}_{\phi_{j}} \right) = & \  0,\quad \text{ for } j=1,2,\dots,P,
    \end{aligned}
  \label{eq:benchmark3DhcModelUnsteadySequential_criticalPoint_exp_const}
\end{equation}
where $(\mathbf{T}_{\phi_{l}})_i$ is the vector containing the values of the field $T_{\phi_l}(\mathbf{x}_i, \tau_1)$ at the measurement points.
We recall that $\mathbf{T}_{\phi_{l}}$ is independent of $k$.

Let us define the matrix $\Theta$ in $\M^{P \times P}$ such that
\begin{equation}
    \Theta_{i, j} = T_{\phi_{j}}(\mathbf{x}_i, \tau^1).
    \label{eq:sequentialThetaMatrix_const}
\end{equation}
Equation (\ref{eq:benchmark3DhcModelUnsteadySequential_criticalPoint_exp_const}) can now be written as
\begin{equation}
    \Theta^T\mathbf{R}^k[\hat{\mathbf{w}}^k] = \mathbf{0}.
\end{equation}

Using (\ref{eq:directProbDecomposition_sequential_const}), the vector associated to the solution of the direct problem at the measurement points, $\mathbf{T}^k[\mathbf{w}^k] \in \R^P$, for each $k$, can be written as
\begin{equation}
    \mathbf{T}^k[\hat{\mathbf{w}}^k] = \sum_{j=1}^P w^k_j \mathbf{T}_{\phi_{j}} + \mathbf{T}^k_{IC} = \Theta \hat{\mathbf{w}}^k + \mathbf{T}^k_{IC}.
    \label{eq:benchmark3DhcModelUnsteadySequential_solAtMeasurements_const}
\end{equation}
Recalling the definition of $\mathbf{R}^k$ and (\ref{eq:benchmark3DhcModelUnsteadySequential_solAtMeasurements_const}), we have
\begin{equation}
    \Theta^T \mathbf{R}^k[\hat{\mathbf{w}}^k] = \Theta^T(\Theta \hat{\mathbf{w}}^k  + \mathbf{T}^k_{IC} - \hat{\mathbf{T}}^k) = \mathbf{0}.
\end{equation}
Therefore, for each $k$, $1 \leq k \leq P_t$, a solution of the inverse problem, $\hat{\mathbf{w}}^k$, is obtained by solving the linear system
\begin{equation}
    \Theta^T \Theta \hat{\mathbf{w}}^k = \Theta^T(\hat{\mathbf{T}}^k - \mathbf{T}^k_{IC}).
  \label{eq:linSys_parametrizedBC_sequential_constant}
\end{equation}
It is important to notice that the system matrix, $\Theta^T \Theta$, is $k$-independent. 

Equation (\ref{eq:linSys_parametrizedBC_sequential_constant}) is generally called the normal equation.
By solving this linear system, we obtain the weights, $\hat{\mathbf{w}}^k$, that correspond to a critical point of the functional $S_1^k$, defined by (\ref{eq:costFunction}).
As mentioned, $\Theta$ is constant.
So, we can compute it once and for all in an offline phase.  

The proposed methodology for the solution of the inverse Problem~\ref{prob:sequentialUnsteadyInverseProblem} is summarized in Algorithm~\ref{alg:inverseSolver_constant}.
It is important to notice that, for each time interval $(\tau^{k-1},\tau^k]$, $T_{\phi_i}$ and the related vector do not change but $T_{IC}^k$ does because its IC depends on the temperature field at time $\tau^{k-1}$.

\begin{algorithm}[htb!]
 \caption{Inverse solver for the solution of Problem~\ref{prob:sequentialUnsteadyInverseProblem} with piecewise constant parameterization in time of the heat flux, $g$.}
 \label{alg:inverseSolver_constant}
    \hspace*{\algorithmicindent} \textbf{OFFLINE}\\
    \hspace*{\algorithmicindent} \textbf{Input} RBF shape parameter, $\eta$; thermocouples measurement points and times, $\Psi, \Upsilon$ 
    \begin{algorithmic}[1]
        \State Setup RBF parameterization by (\ref{eq:GaussianRBF})
        \State Compute $T_{\phi_i}$ for $i = 1,2,\dots,P$ by solving Problem~\ref{prob:sequential_basis}
        \State Assemble matrix $\Theta$ by (\ref{eq:sequentialThetaMatrix_const})
    \end{algorithmic} 
    \hspace*{\algorithmicindent} \\
    \hspace*{\algorithmicindent} \textbf{ONLINE}\\
    \hspace*{\algorithmicindent} \textbf{Input} Initial condition, $T_0$  
    \begin{algorithmic}[1]
        \State Set $k = 1$ 
        \While{\texttt{$k \leq P_t$}}
            \State Read the thermocouples measurements, $\hat{\mathbf{T}}^k$
            \State Compute $T_{IC}^k$ by solving Problem~\ref{prob:Tic}
            \State Assemble $\mathbf{T}_{IC}^k$
            \State Compute $\hat{\mathbf{w}}^k$ by solving (\ref{eq:linSys_parametrizedBC_sequential_constant})
            \State Compute $g_i^k(t)$ by (\ref{eq:heatFluxConstTimeBasis})
            \State Compute the heat flux $\mathscr{g}(\mathbf{x}, t)$ for $t\in(\tau^{k-1}, \tau^k]$ by (\ref{eq:parametrizedSequentialHeatFlux}) 
            \State Use (\ref{eq:directProbDecomposition_sequential_const}) to compute $T^k[g^k]$
            \State $k = k + 1$
        \EndWhile
    \end{algorithmic}
\end{algorithm}

Notice that, in this setting, (\ref{eq:linSys_parametrizedBC_sequential_constant}) is an affine map from the observations, $\hat{\mathbf{T}}^k$, to the heat flux weights, $\mathbf{w}^k$.
Consequently, we have that the existence and uniqueness of the solution of the inverse problem depends on the invertibility of the matrix $\Theta^T \Theta$.
We can easily see that the matrix is symmetric and positive semi-definite.
In general, however, we cannot ensure that it is invertible.
In fact, the invertibility depends on the choice of the basis function, the computational domain, and the BCs.

Before moving to the piecewise linear case, we recall the offline-online decomposition of Algorithm~\ref{alg:inverseSolver_constant}.
In the offline phase, we compute $T_{\phi_i}$ for $i=1,2,\dots,P$ by solving Problem~\ref{prob:sequential_basis} and assemble the related matrix $\Theta$.
Then, in the online phase, we input the measurements $\hat{\mathbf{T}}$, solve Problem~\ref{prob:Tic} and the linear system (\ref{eq:linSys_parametrizedBC_sequential_constant}).

For the choice made when selecting the basis functions, this linear system has the dimensions of the number of thermocouples (quite small, in general).
However, solution of Problem~\ref{prob:Tic} involves the solution of a full order model whose computational cost depends on the discretization size.
Consequently, this method is not suitable for real-time as it is.
To achieve real-time performances, we need to apply model order reduction techniques.
This will be the subject of our future work.

As a final remark, we notice, that for the application of this method, linearity of the direct problem is essential.
In fact, it is a necessary condition for Theorem~\ref{teo:decomposition_const}.

\subsubsection{Piecewise Linear Heat Flux}\label{sec:linearInverseSolver}
Suppose to have the solution to
\begin{problem}{}
    For each $i$, $1 \leq i \leq P$, find $T_{d_i}$ such that
    \begin{equation}
      \rho C_p\frac{\partial T_{d_i}}{\partial t} - k_s \Delta T_{d_i} = -\rho C_p  T_{\phi_i}, \quad      \text{in }\Omega \times (\tau^{0}, \tau^1],
  \end{equation}
  with BCs and IC
    \begin{equation}
    \label{wams3}
    \begin{cases}
      -k_s \nabla T_{d_i} \cdot \mathbf{n} = 0          & \text{on } (\Gamma_{s_{in}} \cup \Gamma_{s_{ex}}) \times (\tau^{0}, \tau^1],\\
      -k_s \nabla T_{d_i} \cdot \mathbf{n} = h T_{d_i}  & \text{on } \Gamma_{sf} \times (\tau^{0}, \tau^1],\\
      T_{d_i}(\cdot,\tau^{0}) = 0                       & \text{in } \Omega.
    \end{cases}
    \end{equation}
\label{prob:Td}
\end{problem}
From the solution of this problem, we define
\begin{equation}
    T_{d_i}^k (\mathbf{x}, t) = T_{d_i}(\mathbf{x}, t - \tau^{k - 1}), \quad \text{for } t \in (\tau^{k-1}, \tau^k].
\end{equation}

Then, we state
\begin{theorem}
    Given $T_{IC}^k$ and $\mathbf{w}^k, 1\leq k \leq P_t$ (and so $\mathscr{g}^k$ defined as (\ref{eq:parametrizedSequentialHeatFlux})), $T_{d_i}$ and $T_{\phi_{i}}$, $1\leq i \leq P$,  then the function defined as
    \begin{equation}
        T^k[\mathscr{g}^k] = \sum_{i=1}^{P} \left( g_i^k T^k_{\phi_{i}} + {g_i^k}' T^k_{d_i} \right) +  T_{IC}^k,
      \label{eq:directProbDecomposition_sequential_linear}
    \end{equation}
    is the solution to Problem~\ref{prob:sequential_directUnsteady} associated with the heat flux  $\mathscr{g}(x,t)$ which in each measurement subinterval $(\tau^{k-1}, \tau^k]$ coincides with $\mathscr{g}^k(x,t)$ given by (\ref{eq:parametrizedSequentialHeatFlux}), with $g_i^k(t)$ as in (\ref{eq:heatFluxLinearTimeBasis}).
    \label{teo:decomposition_linear}
\end{theorem}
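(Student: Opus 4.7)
The plan is to mimic closely the proof of Theorem~\ref{teo:decomposition_const}, since the only genuine novelty is that the time-weights $g_i^k(t)$ are now affine rather than constant. First I would substitute the ansatz (\ref{eq:directProbDecomposition_sequential_linear}) into the heat equation (\ref{eq:sequential_directUnsteady_EQ1}), using linearity of $\partial/\partial t$ and $\Delta$. Differentiating $g_i^k(t)\,T_{\phi_i}^k$ in time produces the extra term $(g_i^k)'(t)\,T_{\phi_i}^k$ which was absent in the constant case; since $g_i^k$ is linear on $(\tau^{k-1},\tau^k]$ its derivative $(g_i^k)'(t)$ is a constant, so the time derivative of $(g_i^k)'(t)\,T_{d_i}^k$ reduces to $(g_i^k)'(t)\,\partial T_{d_i}^k/\partial t$.

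Collecting terms yields a sum of three brackets: one multiplied by $g_i^k$ containing $\rho C_p \partial_t T_{\phi_i}^k - k_s \Delta T_{\phi_i}^k$, which vanishes by Problem~\ref{prob:sequential_basis}; one multiplied by $(g_i^k)'$ containing $\rho C_p \partial_t T_{d_i}^k - k_s \Delta T_{d_i}^k + \rho C_p T_{\phi_i}^k$, which vanishes by Problem~\ref{prob:Td}; and a remaining $\rho C_p \partial_t T_{IC}^k - k_s \Delta T_{IC}^k$, which vanishes by Problem~\ref{prob:Tic}. This is precisely why the source term $-\rho C_p T_{\phi_i}$ was engineered into the definition of $T_{d_i}$: it is the exact compensation for the extra term arising from $(g_i^k)' \neq 0$.

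Next I would verify the boundary conditions component by component, exactly as in the constant case. On $\Gamma_{s_{in}}$, the $T_{d_i}^k$ contributions vanish (their Neumann data is zero on this boundary), so the flux reduces to $\sum_i g_i^k(t)\,\phi_i = \mathscr{g}^k$. On $\Gamma_{s_{ex}}$ every summand is zero. On $\Gamma_{sf}$, the convective pieces from $T_{\phi_i}^k$ and $T_{d_i}^k$ combine with $h(T_{IC}^k - T_f)$; factoring out $h$ and recognizing the definition of $T^k[\mathscr{g}^k]$ gives the required $h(T^k[\mathscr{g}^k] - T_f)$.

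Finally, for the initial condition I would argue by induction on $k$, as in the constant case. The key observation is that $T_{\phi_i}(\cdot,\tau^0)=0$ and $T_{d_i}(\cdot,\tau^0)=0$, so via the translation (\ref{eq:basisTranslation}) both $T_{\phi_i}^k$ and $T_{d_i}^k$ vanish at $t=\tau^{k-1}$; hence $T^k[\mathscr{g}^k](\cdot,\tau^{k-1}) = T_{IC}^k(\cdot,\tau^{k-1})$, which equals $T_0$ for $k=1$ and $T^{k-1}(\cdot,\tau^{k-1})$ for $k>1$ by the definition of $T_{IC}^k$. The one place that requires any care is the bookkeeping in the time-derivative step, specifically checking that the new term $\rho C_p (g_i^k)' T_{\phi_i}^k$ is exactly cancelled by the source in Problem~\ref{prob:Td}; everything else is a verbatim repetition of the argument already used for the piecewise constant case.
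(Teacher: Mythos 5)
Your proposal is correct and follows essentially the same route as the paper's proof: substitute the ansatz into the PDE, observe that the extra term $\rho C_p (g_i^k)' T_{\phi_i}^k$ from the product rule is exactly cancelled by the source $-\rho C_p T_{\phi_i}$ built into Problem~\ref{prob:Td}, check the three boundary conditions term by term, and handle the initial condition by induction using the vanishing of $T_{\phi_i}$ and $T_{d_i}$ at the start of each subinterval. No gaps.
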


\begin{proof}
    Substituting (\ref{eq:directProbDecomposition_sequential_linear}) in (\ref{eq:sequential_directUnsteady_EQ1}) and considering (\ref{eq:sequential_directUnsteady2_EQ1_const}) and (\ref{eq:TICunsteady_EQ1}), we have
    \begin{equation}
      \begin{aligned}
          \rho C_{p} \frac{\partial \left[ \sum_{i=1}^P  \left( g_i^k(t) T^k_{\phi_i} + {g_i^k}' T^k_{d_i} \right) + T_{IC}^k \right]}{\partial t} - k_s \Delta \left[\sum_{i=1}^P  \left( g_i^k(t) T^k_{\phi_i} + {g_i^k}' T^k_{d_i} \right) + T_{IC}^k \right] =  \\
          \rho C_{p} \left[ \sum_{i=1}^P  {g_i^k}' T^k_{\phi_i} + \sum_{i=1}^P g_i^k(t) \left( \frac{\partial  T^k_{\phi_i}}{\partial t} - k_s \Delta T^k_{\phi_i} \right) \right]  + \sum_{i=1}^P {g_i^k}' \left( \rho C_{p}  \frac{\partial T^k_{d_i}}{\partial t} - k_s \Delta T^k_{d_i} \right) + \rho C_{p} \frac{\partial T_{IC}^k}{\partial t} - k_s\Delta T_{IC}^k =\\
          \rho C_{p} \sum_{i=1}^P  {g_i^k}' T^k_{\phi_i}  + \sum_{i=1}^P {g_i^k}' \left( \rho C_{p}  \frac{\partial T^k_{d_i}}{\partial t} - k_s \Delta T^k_{d_i} \right)   = 0, \text{ in }\Omega \times (\tau^{k-1}, \tau^k].\\
      \end{aligned}
    \end{equation}
    Similarly, for the BCs we have
    \begin{equation}
        \begin{aligned}
            - k_s \nabla \left( \sum_{i=1}^{P} \left( g_i^k T^k_{\phi_{i}} + {g_i^k}' T^k_{d_i} \right) +  T_{IC}^k \right) \cdot \mathbf{n} = \sum_{i=1}^{P} g_i^k \left( - k_s \nabla T^k_{\phi_{i}} \cdot \mathbf{n} \right) = \sum_{i=1}^{P} g_i^k \phi_{i} = \mathscr{g}^k, \text{ on } \Gamma_{s_{in}} \times (\tau^{k-1}, \tau^k],
        \end{aligned}
    \end{equation}
    \begin{equation}
        - k_s \nabla \left( \sum_{i=1}^{P} \left( g_i^k T^k_{\phi_{i}} + {g_i^k}' T^k_{d_i} \right) +  T_{IC}^k \right) \cdot \mathbf{n} = 0  \text{ on } \Gamma_{s_{ex}} \times (\tau^{k-1}, \tau^k],
    \end{equation}
    and
    \begin{equation}
        \begin{aligned}
            - k_s \nabla \left( \sum_{i=1}^{P} \left( g_i^k T^k_{\phi_{i}} + {g_i^k}' T^k_{d_i} \right) +  T_{IC}^k \right) \cdot \mathbf{n} =  h \left[ \sum_{i=1}^{P} \left( g_i^k T^k_{\phi_{i}} + {g_i^k}' T^k_{d_i} \right) + T_{IC}^k - T_f \right] =  h \left( T^k[\mathscr{g}^k] - T_f \right),\\
             \text{ on } \Gamma_{sf} \times (\tau^{k-1}, \tau^k].
        \end{aligned}
    \end{equation}
    With respect to the IC, at each interval we must proceed by induction.
    For $k=1$,
    \begin{equation}
        T^1[g^1](\cdot,\tau^0) = T^0(\cdot) \text{ in } \Omega.
    \end{equation}
    For $k>1$, thanks to (\ref{wams1}), (\ref{wams2}) and (\ref{wams3}), we deduce,
    \begin{equation}
            T^k[g^k](\cdot, \tau^{k-1}) = \sum_{i=1}^{P} \left(  g_i^{k} (\tau^{k-1}) T^k_{\phi_i}(\cdot, \tau^{k-1}) + {g_i^k}' T^k_{d_i}(\cdot, \tau^{k-1}) \right) + 
            T_{IC}^k(\cdot, \tau^{k-1})  =  T^{k-1}(\cdot,\tau^{k-1}), \text{ in } \Omega.   
    \end{equation}
\end{proof}

\paragraph{Solving the minimization problem for $S_1^k$ with piecewise linear approximation of the heat flux}

Thanks to Theorem~\ref{teo:decomposition_linear}, we have
\begin{equation}
    \begin{aligned}
        \frac{\partial \mathbf{T}^k[\mathbf{w}^k]}{\partial w_j^k} = \frac{\partial \left[ \sum_{l=1}^{P} \left( g_l^k(\tau^k) \mathbf{T}^k_{\phi_{l}}(\tau^k) + {g_l^k}' \mathbf{T}^k_{d_l}(\tau^k) \right) +  \mathbf{T}_{IC}^k(\tau^k) \right]}{\partial w_j^k} = \\
        \frac{\partial \left\{ \sum_{l=1}^{P} \left[ w_l^k \mathbf{T}^k_{\phi_{l}}(\tau^k) + (w_l^k - w_l^{k-1}) f_{samp} \mathbf{T}^k_{d_l}(\tau^k) \right] +  \mathbf{T}_{IC}^k(\tau^k) \right\} }{\partial w_j^k}  = \mathbf{T}^k_{\phi_{j}}(\tau^k) + f_{samp} \mathbf{T}^k_{d_j}(\tau^k) = \mathbf{T}_{\phi_{j}}(\tau^1) + f_{samp} \mathbf{T}_{d_j}(\tau^1).
    \end{aligned}
    \label{eq:dTdw}
\end{equation}
Thanks to (\ref{eq:dTdw}), (\ref{eq:benchmark3DhcModelUnsteadySequential_criticalPoint}) in the linear case is rewritten as
\begin{equation}
    \mathbf{R}^k[\hat{\mathbf{w}}^k]^T (\mathbf{T}_{\phi_{j}}  + f_{samp} \mathbf{T}_{d_j}) = 0,\text{ for all } j=1,2,\dots,P.
  \label{eq:benchmark3DhcModelUnsteadySequential_criticalPoint2_linear}
\end{equation}
Let us define the matrices $\tilde{\Theta}, \Theta_d$ in $\M^{P \times P}$ such that
\begin{equation}
    \left( \Theta_d \right)_{i,j} := f_{samp} T_{d_j}(\mathbf{x}_i,\tau^1), \quad \left( \tilde{\Theta} \right)_{i,j} := \left( \Theta \right)_{i,j} + \left( \Theta_d \right)_{i,j}   ,
    \label{eq:sequentialThetaMatrix_linear}
\end{equation}
both independent of the index $k$.

Using (\ref{eq:sequentialThetaMatrix_linear}) and (\ref{eq:directProbDecomposition_sequential_linear}), we have
\begin{equation}
\begin{aligned}
    \mathbf{T}^k[\hat{\mathbf{w}}^k] =  \sum_{l=1}^{P} \left( g_l^k(\tau^k) \mathbf{T}_{\phi_{l}} + {g_l^k}' \mathbf{T}_{d_l} \right) +  \mathbf{T}_{IC}^k = \sum_{l=1}^{P} \left[ w_l^k \mathbf{T}_{\phi_{l}} + (w_l^k - w_l^{k-1}) f_{samp} \mathbf{T}_{d_l} \right] +  \mathbf{T}_{IC}^k =  \tilde{\Theta} \hat{\mathbf{w}}^k -  \Theta_d \hat{\mathbf{w}}^{k-1} + \mathbf{T}^k_{IC}.
    \label{eq:benchmark3DhcModelUnsteadySequential_solAtMeasurements_linear}
\end{aligned}
\end{equation}
Recalling the definition of $\mathbf{R}^k$, taking into account (\ref{eq:sequentialThetaMatrix_linear}) and (\ref{eq:benchmark3DhcModelUnsteadySequential_solAtMeasurements_linear}), system (\ref{eq:benchmark3DhcModelUnsteadySequential_criticalPoint2_linear}) can be written as
\begin{equation}
    {(\tilde{\Theta})}^T  \mathbf{R}^k[\hat{\mathbf{w}}^k] = {(\tilde{\Theta})}^T\left( \tilde{\Theta} \hat{\mathbf{w}}^k -  \Theta_d \hat{\mathbf{w}}^{k-1} + \mathbf{T}^k_{IC}   - \hat{\mathbf{T}}^k \right) = \mathbf{0}.
\end{equation}
Therefore, a solution of the inverse problem $S_1^k$, considering $g^k(t)$ piecewise linear, is obtained by solving the linear system
\begin{equation}
    {(\tilde{\Theta})}^T \tilde{\Theta} \hat{\mathbf{w}}^k = {(\tilde{\Theta})}^T   \left(\hat{\mathbf{T}}^k + \Theta_d \hat{\mathbf{w}}^{k-1} - \mathbf{T}^k_{IC} \right).
  \label{eq:linSys_parametrizedBC_sequential_linear}
\end{equation}
Notice that as in the piecewise constant case, the system matrix, $(\tilde{\Theta})^T \tilde{\Theta}$, is $k$-independent. 

We summarize the proposed methodology for the solution of the inverse Problem~\ref{prob:sequentialUnsteadyInverseProblem} in Algorithm~\ref{alg:inverseSolver_linear}.
Similarly to the piecewise constant case, for each time interval $(\tau^{k-1},\tau^k]$, $T_{\phi_i}$, $T_{d_i}$ and the related vectors do not change but $T_{IC}^k$ does because its IC depends on the temperature field at time $\tau^{k-1}$.

\begin{algorithm}[htb!]
 \caption{Inverse solver for the solution of Problem~\ref{prob:sequentialUnsteadyInverseProblem} with piecewise linear parameterization in time of the heat flux, $g$.}
 \label{alg:inverseSolver_linear}
    \hspace*{\algorithmicindent} \textbf{OFFLINE}\\
    \hspace*{\algorithmicindent} \textbf{Input} RBF shape parameter, $\eta$; thermocouples measurement points and times, $\Psi, \Upsilon$ 
    \begin{algorithmic}[1]
        \State Setup RBF parameterization by (\ref{eq:GaussianRBF})
        \State Compute $T_{\phi_i}$ for $i = 1,2,\dots,P$ by solving Problem~\ref{prob:sequential_basis}
        \State Compute $T_{d_i}$ for $i = 1,2,\dots,P$ by solving Problem~\ref{prob:Td}
        \State Assemble matrices $\tilde{\Theta}$ and $\Theta_d$
    \end{algorithmic} 
    \hspace*{\algorithmicindent} \\
    \hspace*{\algorithmicindent} \textbf{ONLINE}\\
    \hspace*{\algorithmicindent} \textbf{Input} Initial condition, $T_0$
    \begin{algorithmic}[1]
        \State Set $k = 1$ 
        \While{\texttt{$k \leq P_t$}}
            \State Read the thermocouples measurements, $\hat{\mathbf{T}}^k$
            \State Compute $T_{IC}^k$ by solving Problem~\ref{prob:Tic}
            \State Assemble $\mathbf{T}_{IC}^k$
            \State Compute $\hat{\mathbf{w}}^k$ by solving (\ref{eq:linSys_parametrizedBC_sequential_linear}) 
            \State Compute $g_i^k(t)$ by  (\ref{eq:heatFluxLinearTimeBasis})
            \State Compute the heat flux $\mathscr{g}(\mathbf{x}, t)$ for $t\in(\tau^{k-1}, \tau^k]$ by (\ref{eq:parametrizedSequentialHeatFlux}) 
            \State Use (\ref{eq:benchmark3DhcModelUnsteadySequential_solAtMeasurements_linear}) to compute $T^k[g^k]$
            \State $k = k + 1$
        \EndWhile
    \end{algorithmic}
\end{algorithm}

Also in this setting, (\ref{eq:linSys_parametrizedBC_sequential_linear}) is an affine map from the observations, $\hat{\mathbf{T}}^k$, to the heat flux weights, $\hat{\mathbf{w}}^k$.
Consequently, we have that the existence and uniqueness of the solution of the inverse problem depends on the invertibility of the matrix ${(\tilde{\Theta})}^T \tilde{\Theta}$.
It is symmetric and positive semi-definite.
However, we cannot ensure that it is invertible.
In fact, the invertibility depends on the choice of the basis functions, the computational domain, and the BCs.

We notice that, also in the piecewise linear case, the offline-online decomposition holds.
Moreover, the linear system dimensions are the same of the piecewise constant case and the linearity of the direct problem is still a necessary condition also for Theorem~\ref{teo:decomposition_linear}.

\subsection{Inverse Solver for $S_2^k$}\label{sec:sequentialUnsteadyInverseSolver_heatNorm}

In this section, we discuss the solution of the inverse Problem~\ref{prob:sequentialUnsteadyInverseProblem_heatNorm}.
In particular, we extend the previously developed methodologies adapting them to the cost function $S_2^k$ as defined in (\ref{eq:sequentialUnsteadyInverseProblem_heatNorm_functional}).

As shown in detail in Section~\ref{sec:unsteadyInverseBenchmark_linear}, the piecewise linear inverse solver of Section~\ref{sec:linearInverseSolver} presents instability issues, under certain conditions.
Then, we stated this second inverse problem with the purpose of stabilizing the solution.
To do this, we designed Problem~\ref{prob:sequentialUnsteadyInverseProblem_heatNorm} from Problem~\ref{prob:sequentialUnsteadyInverseProblem} by adding to the cost function (\ref{eq:sequentialUnsteadyInverseProblem_heatNorm_functional}) a term that penalizes the heat flux norm $\langle g^k(\tau^k), g^k(\tau^k) \rangle_{L^2(\Gamma_{s_{in}})}$. 

Also in this case, we exploit the parameterization of the heat flux (\ref{eq:parametrizedSequentialHeatFlux}).
As a consequence, we introduce the inverse problem in terms of $\mathbf{w}^k$ as
\begin{problem}{(\textbf{Inverse})}
    Given the temperature measurements $\hat{T}(\Psi, \Upsilon)$, find $\hat{\mathbf{w}}^k\in\R^{P}$, $1 \leq k \leq P_t$, which minimizes the functional
  \begin{equation}
          S_2^k[\mathbf{w}^k] = \frac{1}{2}  \sum_{i=1}^{P} [T^k[\mathbf{w}^k](\mathbf{x}_{i}, \tau^k) - \hat{T}^k(\mathbf{x}_i)]^2 + p_g \int_{\Gamma_{s_{in}}} \left( \sum_{j=1}^P w_j^k \phi_j(\mathbf{x}) \right)\left( \sum_{q=1}^P w_q^k \phi_q(\mathbf{x}) \right) d\Gamma .
      \label{eq:costFunction_heatNorm}
  \end{equation}
\label{prob:benchmark3DhcModelUnsteady_inverseProblem_parametrizedBC_heatNorm}
\end{problem}
Notice that (\ref{eq:costFunction_heatNorm}) holds true for both the piecewise constant and linear  parameterization of the heat flux since
\begin{equation}
    g_i^k(\tau^k) = w_i^{k},
\end{equation}
for (\ref{eq:heatFluxConstTimeBasis}) as well as for (\ref{eq:heatFluxLinearTimeBasis}). 

Considering the second term of the right hand side of (\ref{eq:costFunction_heatNorm}), we can write
\begin{equation}
        \int_{\Gamma_{s_{in}}} \left( \sum_{l=1}^P w_l^k \phi_l(\mathbf{x}) \right)\left( \sum_{q=1}^P w_q^k \phi_q(\mathbf{x}) \right) d\Gamma = \sum_{l=1}^P \sum_{q=1}^P w_l^k w_q^k \int_{\Gamma_{s_{in}}} \phi_l(\mathbf{x}) \phi_q(\mathbf{x}) d\Gamma.
    \label{eq:costFunction_heatNorm_secondTerm2}
\end{equation}

Let us define the vectors of $\R^{P^2}$
\begin{equation}
    \pmb{\phi}_\phi = 
        \begin{bmatrix}
            \int_{\Gamma_{s_{in}}} \phi_1(\mathbf{x}) \phi_1(\mathbf{x}) d\Gamma & \int_{\Gamma_{s_{in}}} \phi_1(\mathbf{x}) \phi_2(\mathbf{x}) d\Gamma & \cdots & \int_{\Gamma_{s_{in}}} \phi_P(\mathbf{x}) \phi_P(\mathbf{x}) d\Gamma\\
        \end{bmatrix}^T,
\end{equation}
and
\begin{equation}
    \mathbf{a}_w^k = 
        \begin{bmatrix}
            {w_1^k}^2 & w_1^k w_2^k & \cdots & {w_P^k}^2
        \end{bmatrix}^T.
\end{equation}
We can now rewrite (\ref{eq:costFunction_heatNorm_secondTerm2}) as
\begin{equation}
        \sum_{l=1}^P \sum_{q=1}^P w_l^k w_q^k \int_{\Gamma_{s_{in}}} \phi_l(\mathbf{x}) \phi_q(\mathbf{x}) d\Gamma = \pmb{\phi}_\phi^T \mathbf{a}_w^k.
        \label{eq:costFunction_heatNorm_secondTerm3}
\end{equation}

Furthermore, deriving (\ref{eq:costFunction_heatNorm_secondTerm3}) with respect to the weights, we obtain 
\begin{equation}
        \frac{\partial \left[ \int_{\Gamma_{s_{in}}} \left( \sum_{l=1}^P w_l^k \phi_l(\mathbf{x}) \right)\left( \sum_{q=1}^P w_q^k \phi_q(\mathbf{x}) \right) d\Gamma \right]}{\partial w_j^k} = \frac{\partial \pmb{\phi}_\phi^T \mathbf{a}_w^k}{\partial w_j^k} = \pmb{\phi}_\phi^T \frac{\partial \mathbf{a}_w^k}{\partial w_j^k}, \quad \text{for } j = 1,2,\dots,P.
    \label{eq:costFunction_heatNorm_secondTerm4}
\end{equation}
Considering the case $j=1$, we have
\begin{equation}
        \pmb{\phi}_\phi^T \frac{\partial \mathbf{a}_w^k}{\partial w_1^k} = \pmb{\phi}_\phi^T 
        \begin{bmatrix}
            2 w_1^k &  w_2^k  &\cdots & w_P^k  & w_2^k  & 0 &\cdots & 0 & w_3^k & 0 &\cdots & 0 & w_P^k  & 0 & \cdots 
        \end{bmatrix}^T,
\end{equation}
that we can rewrite as
\begin{equation}
        \pmb{\phi}_\phi^T \frac{\partial \mathbf{a}_w^k}{\partial w_1^k} =  2 \pmb{\phi}_{\phi_1} w_1^k + \pmb{\phi}_{\phi_2} w_2^k + \pmb{\phi}_{\phi_3} w_3^k +  \cdots + \pmb{\phi}_{\phi_P} w_P^k + \pmb{\phi}_{\phi_{P + 1}} w_2^k + \pmb{\phi}_{\phi_{2P + 1}} w_3^k + \cdots + \pmb{\phi}_{\phi_{(P-1)P + 1}} w_P^k.
\end{equation}
Now, by noticing that
\begin{equation}
    \pmb{\phi}_{\phi_{(r-1)P + s}} = \int_{\Gamma_{s_{in}}} \phi_r(\mathbf{x}) \phi_s(\mathbf{x}) d\Gamma = \pmb{\phi}_{\phi_{(s-1)P + r}},\quad 1 \leq r,s \leq P,
\end{equation}
we obtain
\begin{equation}
        \pmb{\phi}_\phi^T \frac{\partial \mathbf{a}_w^k}{\partial w_1^k} =  2 \pmb{\phi}_{\phi_1} w_1^k + 2 \pmb{\phi}_{\phi_2} w_2^k + 2 \pmb{\phi}_{\phi_3} w_3^k +  \cdots + 2 \pmb{\phi}_{\phi_P} w_P^k = 2 \pmb{\phi}_{\phi_{1:P}}^T \mathbf{w}^k.
\end{equation}
Similarly, if we consider the general case, we have
\begin{equation}
    \begin{aligned}
        \pmb{\phi}_\phi^T \frac{\partial \mathbf{a}_w^k}{\partial w_j^k} =  2 \pmb{\phi}_{\phi_{(j-1)P + 1}} w_1^k + 2 \pmb{\phi}_{\phi_{(j-1)P + 2}} w_2^k + 2 \pmb{\phi}_{\phi_{(j-1)P + 3}} w_3^k +  \cdots + 2 \pmb{\phi}_{\phi_{(j-1)P + P}} w_P^k = 2 \pmb{\phi}_{\phi_{(j-1)P + 1:(j-1)P + P}}^T \mathbf{w}^k, \quad \text{for } j = 1,2,\dots,P.
    \end{aligned}
        \label{eq:costFunction_heatNorm_secondTerm5}
\end{equation}
Therefore, thanks to (\ref{eq:costFunction_heatNorm_secondTerm4}) and (\ref{eq:costFunction_heatNorm_secondTerm5}), we can write
\begin{equation}
        \frac{\partial \left[ \int_{\Gamma_{s_{in}}} \left( \sum_{l=1}^P w_l^k \phi_l(\mathbf{x}) \right)\left( \sum_{q=1}^P w_q^k \phi_q(\mathbf{x}) \right) d\Gamma \right]}{\partial w_j^k} =  \pmb{\phi}_\phi^T \frac{\partial \mathbf{a}_w^k}{\partial w_j^k} = 2 \pmb{\phi}_{\phi_{(j-1)P + 1:(j-1)P + P}}^T \mathbf{w}^k , \quad \text{for } j = 1,2,\dots,P.
    \label{eq:costFunction_heatNorm_secondTerm6}
\end{equation}

Let us define the matrix $\Phi \in \M^{P \times P}$ such that
\begin{equation}
    \Phi_{r,s} := \int_{\Gamma_{s_{in}}} \phi_r(\mathbf{x}) \phi_s(\mathbf{x}) d\Gamma. 
  \label{eq:costFunction_heatNorm_Phi}
\end{equation}
If we now consider the minimization of $S_2^k$ with respect to the weights, $\mathbf{w}^k$, as in (\ref{eq:benchmark3DhcModelUnsteadySequential_criticalPoint}), we have
\begin{equation}
        \frac{\partial S_2^k[\hat{\mathbf{w}}^k]}{\partial w_{j}^k} =  \sum_{i=1}^{P} (R^k[\hat{\mathbf{w}}^k])_{i} \frac{\partial (\mathbf{T}^k[\hat{\mathbf{w}}^k])_{i}}{\partial w_j^k} + 2 p_g \pmb{\phi}_{\phi_{(j-1)P + 1:(j-1)P + P}}^T \mathbf{w}^k =   0,\text{ for } j=1,2,\dots,P.
  \label{eq:benchmark3DhcModelUnsteadySequential_heatNorm_criticalPoint}
\end{equation}

Considering the piecewise constant case, thanks to (\ref{eq:benchmark3DhcModelUnsteadySequential_solAtMeasurements_const}) and (\ref{eq:costFunction_heatNorm_secondTerm4}), we rewrite (\ref{eq:benchmark3DhcModelUnsteadySequential_heatNorm_criticalPoint}) as
\begin{equation}
    \Theta^T(\Theta \hat{\mathbf{w}}^k  + \mathbf{T}^k_{IC} - \hat{\mathbf{T}}^k) + 2 p_g  \Phi \hat{\mathbf{w}}^k = \mathbf{0},
\end{equation}
being $\Theta$ the matrix defined in (\ref{eq:sequentialThetaMatrix_const}).
Therefore, for each $k$, $1 \leq k \leq P_t$, the solution of the inverse problem, $\hat{\mathbf{w}}^k$, is obtained by solving the linear system
\begin{equation}
    \left( \Theta^T \Theta + 2 p_g  \Phi \right) \hat{\mathbf{w}}^k = \Theta^T(\hat{\mathbf{T}}^k - \mathbf{T}^k_{IC}).
  \label{eq:linSys_parametrizedBC_sequential_heatNorm_constant}
\end{equation}

Similarly, for the piecewise linear case, thanks to (\ref{eq:benchmark3DhcModelUnsteadySequential_solAtMeasurements_linear}) and (\ref{eq:costFunction_heatNorm_secondTerm4}),  we rewrite (\ref{eq:benchmark3DhcModelUnsteadySequential_heatNorm_criticalPoint}) as
\begin{equation}
    {(\tilde{\Theta})}^T\left( \tilde{\Theta} \hat{\mathbf{w}}^k -  \Theta_d \hat{\mathbf{w}}^{k-1} + \mathbf{T}^k_{IC}   - \hat{\mathbf{T}}^k \right) + 2 p_g  \Phi \hat{\mathbf{w}}^k  = \mathbf{0},
\end{equation}
being $\tilde{\Theta}$ and $\Theta_d$ the matrices defined in (\ref{eq:sequentialThetaMatrix_linear}).
Therefore, for each $k$, $1 \leq k \leq P_t$, a solution of the inverse problem, $\hat{\mathbf{w}}^k$, is obtained by solving the linear system
\begin{equation}
    \left( \tilde{\Theta}^T \tilde{\Theta} + 2 p_g  \Phi \right) \hat{\mathbf{w}}^k = \tilde{\Theta}^T(\hat{\mathbf{T}}^k + \Theta_d \hat{\mathbf{w}}^{k-1}- \mathbf{T}^k_{IC}).
  \label{eq:linSys_parametrizedBC_sequential_heatNorm_linear}
\end{equation}

The resulting inverse solvers are straightforward modifications of Algorithm~\ref{alg:inverseSolver_constant} and \ref{alg:inverseSolver_linear}.
Then, we show them in the following Algorithm~\ref{alg:inverseSolver_constant_heatNorm} and \ref{alg:inverseSolver_linear_heatNorm}.

\begin{algorithm}[htb!]
 \caption{Inverse solver for the solution of Problem~\ref{prob:sequentialUnsteadyInverseProblem_heatNorm} with piecewise constant parameterization in time of the heat flux, $g$.}
 \label{alg:inverseSolver_constant_heatNorm}
    \hspace*{\algorithmicindent} \textbf{OFFLINE}\\
    \hspace*{\algorithmicindent} \textbf{Input} RBF shape parameter, $\eta$; thermocouples measurement points and times, $\Psi, \Upsilon$; cost functional parameter, $p_g$ 
    \begin{algorithmic}[1]
        \State Setup RBF parameterization by (\ref{eq:GaussianRBF})
        \State Compute $T_{\phi_i}$ for $i = 1,2,\dots,P$ by solving Problem~\ref{prob:sequential_basis}
        \State Assemble matrix $\Theta$ by (\ref{eq:sequentialThetaMatrix_const})
        \State Assemble matrix $\Phi$ by (\ref{eq:costFunction_heatNorm_Phi})
    \end{algorithmic} 
    \hspace*{\algorithmicindent} \\
    \hspace*{\algorithmicindent} \textbf{ONLINE}\\
    \hspace*{\algorithmicindent} \textbf{Input} Initial condition, $T_0$  
    \begin{algorithmic}[1]
        \State Set $k = 1$ 
        \While{\texttt{$k \leq P_t$}}
            \State Read the thermocouples measurements, $\hat{\mathbf{T}}^k$
            \State Compute $T_{IC}^k$ by solving Problem~\ref{prob:Tic}
            \State Assemble $\mathbf{T}_{IC}^k$
            \State Compute $\hat{\mathbf{w}}^k$ by solving (\ref{eq:linSys_parametrizedBC_sequential_heatNorm_constant})
            \State Compute $g_i^k(t)$ by (\ref{eq:heatFluxConstTimeBasis})
            \State Compute the heat flux $\mathscr{g}(\mathbf{x}, t)$ for $t\in(\tau^{k-1}, \tau^k]$ by (\ref{eq:parametrizedSequentialHeatFlux}) and (\ref{eq:heatFluxConstTimeBasis}) 
            \State Use (\ref{eq:directProbDecomposition_sequential_const}) to compute $T^k[\mathscr{g}^k]$
            \State $k = k + 1$
        \EndWhile
    \end{algorithmic}
\end{algorithm}

\begin{algorithm}[htb!]
 \caption{Inverse solver for the solution of Problem~\ref{prob:sequentialUnsteadyInverseProblem_heatNorm} with piecewise linear parameterization in time of the heat flux, $g$.}
 \label{alg:inverseSolver_linear_heatNorm}
    \hspace*{\algorithmicindent} \textbf{OFFLINE}\\
    \hspace*{\algorithmicindent} \textbf{Input} RBF shape parameter, $\eta$; thermocouples measurement points and times, $\Psi, \Upsilon$; cost functional parameter, $p_g$ 
    \begin{algorithmic}[1]
        \State Setup RBF parameterization by (\ref{eq:GaussianRBF})
        \State Compute $T_{\phi_i}$ for $i = 1,2,\dots,P$ by solving Problem~\ref{prob:sequential_basis}
        \State Compute $T_{d_i}$ for $i = 1,2,\dots,P$ by solving Problem~\ref{prob:Td}
        \State Assemble matrices $\tilde{\Theta}$ and $\Theta_d$
        \State Assemble matrix $\Phi$ by (\ref{eq:costFunction_heatNorm_Phi})
    \end{algorithmic} 
    \hspace*{\algorithmicindent} \\
    \hspace*{\algorithmicindent} \textbf{ONLINE}\\
    \hspace*{\algorithmicindent} \textbf{Input} Initial condition, $T_0$
    \begin{algorithmic}[1]
        \State Set $k = 1$ 
        \While{\texttt{$k \leq P_t$}}
            \State Read the thermocouples measurements, $\hat{\mathbf{T}}^k$
            \State Compute $T_{IC}^k$ by solving Problem~\ref{prob:Tic}
            \State Assemble $\mathbf{T}_{IC}^k$
            \State Compute $\hat{\mathbf{w}}^k$ by solving (\ref{eq:linSys_parametrizedBC_sequential_heatNorm_linear})
            \State Compute $g_i^k(t)$ by  (\ref{eq:heatFluxLinearTimeBasis})
            \State Compute the heat flux $\mathscr{g}(\mathbf{x}, t)$ for $t\in(\tau^{k-1}, \tau^k]$ by (\ref{eq:parametrizedSequentialHeatFlux}) and (\ref{eq:heatFluxLinearTimeBasis}) 
            \State Use (\ref{eq:directProbDecomposition_sequential_linear}) to compute $T^k[\mathscr{g}^k]$
            \State $k = k + 1$
        \EndWhile
    \end{algorithmic}
\end{algorithm}

As a final remark, notice that for $p_g = 0 \frac{K^2}{W^2}$, we end up with the same solution as for $S_1^k$.

\subsection{Regularization}
\label{sec:unsteadyRegularization}

After the development of novel inverse solvers, we provide a brief discussion about regularization.
It is well known that inverse problems as the ones here considered are ill-posed.
This means that for our problem at least one of the following properties does not hold: for all admissible data, a solution exists; for all admissible data, the solution is unique; the solution depends continuously on the data~\cite{Engl1996}.
In our discussion, we turned the infinite dimensional inverse Problem \ref{prob:sequentialUnsteadyInverseProblem} into the solution of the discrete linear systems (\ref{eq:linSys_parametrizedBC_sequential_constant}) and (\ref{eq:linSys_parametrizedBC_sequential_linear}) by making some assumptions on the heat flux (i.e. parameterizing it).
In this new setting, if the matrices $\Theta^T \Theta$ and $\tilde{\Theta}^T \tilde{\Theta}$ are invertible, we have the existence of a unique solution for our inverse problem.

As we will see in the numerical tests section, it turns out that these matrices are very ill-conditioned.
This can cause the matrix to be numerically rank deficient, losing the uniqueness of a solution.
However, this is not the only concern.
We still have the problem of a continuous dependence of the solution on the data.
The ill-conditioning of the linear system causes that, if we have some noise in the data vector (as usual in an industrial measurement equipment), the solution of the linear system diverges from the correct value.

To address both these problems, we require regularization.
There are several techniques available for regularizing a discrete ill-posed problem as the present one.
In general, they are divided into direct methods like Truncated Singular Values Decomposition (TSVD) and Tikhonov regularization, and iterative methods such as the conjugate gradient method.
For a deep discussion of all regularization methods, we refer the interested reader to Hansen's monograph on the subject\cite{Hansen2010}.

In the present investigation, we use TSVD.
To briefly describe this regularization technique, we denote the Singular Values Decomposition (SVD) of a matrix $K$ by
\begin{equation}
    K = U \Sigma V^T = \sum_{i=1}^r \mathbf{u}_i \sigma_i \mathbf{v}_i^T,
\end{equation}
where $\sigma_i$ denotes the i-th singular value of $K$ (numbered according to their decreasing value), $r$ denotes the last no null singular value (i.e. the rank of $K$), $\mathbf{u}_i$ and $\mathbf{v}_i$ are the i-th column of  the semi-unitary matrices $U$ and $V$ respectively (both belonging to $\M^{P \times r}$), and $\Sigma$ is the square matrix of $\M^{r \times r}$ such that $\Sigma_{ii} = \sigma_i$ and $\Sigma_{ij} = 0$ if $i\neq j$.
Then, given $\alpha_{TSVD} \leq r$, the TSVD regularized solution of the general linear system $K \mathbf{z} = \mathbf{c}$ is
\begin{equation}
    \mathbf{z} = \sum_{i=1}^{\alpha_{TSVD}} \left(\frac{\mathbf{u}_i^T \mathbf{c} }{\sigma_i}\right) \mathbf{v}_i.
\end{equation}
This solution differs from the least square solution only in that the sum is truncated at $i = \alpha_{TSVD}$ instead of $i=r$.
In this way, we cut off the smallest singular values that are responsible of the errors propagation.
For a detailed discussion on the solution of discrete ill-posed inverse problems, we refer the reader to Hansen's monograph on the subject~\cite{Hansen2010}.

Together with the classical aforementioned regularization method, we investigate also the regularization by discretization\cite{Aster2019, Kirsch2011}.
Using this method, we exploit the regularizing properties of coarsening the time and/or space discretization to improve the heat flux estimation.
In the next section, we will test the performance of these regularization methods also by adding noise to the thermocouples measurements.

\subsection{Discretization Selection Algorithm}
\label{sec:unsteadyDiscretizationSelectionAlgorithm}

To conclude this section, we propose an algorithm for the automated selection of some of the parameters required by Algorithm~\ref{alg:inverseSolver_linear_heatNorm}. 
As will be shown in Section~\ref{section:numerical}, the numerical tests highlight that this algorithm is very sensitive to the mesh and time discretization refinement as well as to the parameter $p_g$.
We anticipate here that this inverse solver shows severe instabilities for fine discretizations.
However, these instabilities are effectively eliminated for values of $p_g$ that are above a threshold that depends on the discretization refinement.
In for these values of $p_g$, we notice a drastic decrease of the dependency of the algorithm from the discretization.

However, the uncontrolled increase in $p_g$ does not lead to a monotonic improvement of the inverse solver performances.
As can be observed in the numerical results of Section~\ref{section:numerical} (see Figures~\ref{fig:unsteadyNumericalBenchmarkInverseLinear_linear_costFunction_differentDt}, \ref{fig:unsteadyNumericalBenchmarkInverseLinear_linear_costFunction_differentMeshes}, \ref{fig:unsteadyNumericalBenchmarkInverseLinear_linear_timeSpaceRefinement_costFunc5e-11}, \ref{fig:unsteadyNumericalBenchmarkInverseNonLinear_linear_costFunction}, \ref{fig:unsteadyNumericalBenchmarkInverseNonLinear_linear_costFunction_meshes}), the dependency of the algorithm from $p_g$ is such that it is unstable for low values of $p_g$ then, increasing further $p_g$, it sharply achieves an optimum of performance before reaching a plateau at which the algorithm is stable but the term $\langle g^k(\tau^k), g^k(\tau^k) \rangle_{L^2(\Gamma_{s_{in}})}$ in (\ref{eq:sequentialUnsteadyInverseProblem_heatNorm_functional}) overcomes the measurements distance one, $S_1^k$ defined in (\ref{eq:unsteadyInverseFunctional1}).
Thus, for too high values of $p_g$, we have a stable algorithm that is almost independent from the discretization refinement but that provides poor heat flux estimations.

To allow an industrial use of the proposed  inverse solver, the objective of this section is to develop a method for automatically selecting the $\Delta t$, the mesh and the value of $p_g$ such that the algorithm is stable and accurately estimates the mold-steel heat flux.

In developing such method, we assume to have available a reliable dataset of thermocouples measurements, $\hat{T}_{train}(\Psi, \Upsilon_{train})$, that we can use to perform this tuning offline.
Moreover, we assume that, independently from the mold physical parameters and the heat flux values, this inverse solver always shows the previously described behavior with respect to $p_g$.
In particular, we assume that, for values of $p_g$ higher than a problem specific threshold, the algorithm is stable for all the discretizations and independent from them (i.e. we obtain similar solutions for any given mesh and $\Delta t$).   

We recall, that in the real industrial case, we do not have any information about the true heat flux that we want to estimate.
Thus, this selection methodology cannot be based on the heat flux estimation error.
However, Figures~\ref{fig:unsteadyNumericalBenchmarkInverseLinear_linear_costFunction_measDiscrepancy} and \ref{fig:unsteadyNumericalBenchmarkInverseNonLinear_linear_costFunction_measDiscrepancy} show that the measurement discrepancy functional $S_1^k$ and the heat flux estimation error have a similar behavior as functions of $p_g$ and we will use this quantity to determine the quality of the heat flux estimation.

All that said, we begin by selecting an ordered set of meshes $(\Delta x_1,$ $\Delta x_2,$ $\dots,$  $\Delta x_{n_M})$ and an ordered set of timestep sizes $(\Delta t_1,$$  \Delta t_2, $$ \dots, $$ \Delta t_{n_t})$.
We order them from the finest to the coarsest discretization (i.e. $\Delta x_1 <$  $\Delta x_2 < $ $\dots < $ $\Delta x_{n_M}$ and $\Delta t_1 <$  $\Delta t_2 < $ $\dots < $ $\Delta t_{n_t}$).
Then, our first objective is to identity a $p_g^0$ within the aforementioned stability region.

To do it, we start with a tentative $p_g^0$.
For this value of the parameter, we solve the inverse problem on the training measurement dataset for all $\Delta x$ and $\Delta t$.
Let us denote by $T[\Delta x, \Delta t]$ the corresponding solution.
Having done so, we compute 
\begin{equation}
    \Delta T := \max_{i,j,q,p}{\norm{T[\Delta x_i, \Delta t_j] - T[\Delta x_q, \Delta t_p]}_{L^\infty\left((0, t_f]; L^2(\Omega_s)\right)}}.
\end{equation}
If we have
\begin{equation}
    \Delta T > \Delta x_{n_M} + \Delta t_{n_t},
\end{equation}
we consider that the solution is too dependent on the discretization refinement.
Then, we increase the value of $p_g^0$ and redo the calculations until 
\begin{equation}
    \Delta T \leq \Delta x_{n_M} + \Delta t_{n_t},
    \label{eq:discrSelection_stabilityCondition}
\end{equation}
is satisfied.

Once we find a value of $p_g^0$ within the stability region, we choose the discretization setup ($\Delta x^1$, $\Delta t^1$) that corresponds to the minimum for $p_g^0$ of
\begin{equation}
    m_{S}[\Delta x, \Delta t, p_g] := mean_k \left( S_1^k[\Delta x, \Delta t, p_g] \right).
    \label{eq:discrSelection_mesurementsDiscrepancy}
\end{equation}
Once we select $\Delta x^1$ and $\Delta t^1$, we choose $p_g^1$  as the value of $p_g$ that minimizes $m_{S}[\Delta x^1, \Delta t^1, p_g]$.
Then, we fix $p_g = p_g^1$ and we solve again the inverse problem for all the considered discretization setups.
If the previously selected discretization is the one that corresponds to the lowest value of $m_{S}[\Delta x, \Delta t, p_g^1]$, we choose $\Delta x^1$, $\Delta t^1$, and $p_g = p_g^1$, and we stop the process.
Otherwise, we continue iterating by selecting $\Delta x^2$ and $\Delta t^2$ as the ones corresponding to the smallest $m_{S}[\Delta x, \Delta t, p_g^1]$ and looking for the $p_g^2$ that minimizes $m_{S}[\Delta x^2, \Delta t^2, p_g]$, and so on.
We summarize all this process in Algorithm~\ref{alg:meshSelection}.

\begin{algorithm}[!htb]
    \caption{Offline selection of the mesh, the timestep size and $p_g$ for the inverse solver in Algorithm~\ref{alg:inverseSolver_linear_heatNorm}.}
    \label{alg:meshSelection}
    \hspace*{\algorithmicindent} \textbf{Input} Ordered set of meshes, $(\Delta x_1, $$ \Delta x_2,$$  \dots,$$  \Delta x_{n_M})$; ordered set of timestep sizes, $(\Delta t_1,$$  \Delta t_2, $$ \dots, $$ \Delta t_{n_t})$; $p_g^0$; training set, $\hat{T}_{train}(\Psi, \Upsilon_{train})$
    \begin{algorithmic}[1]
        \While{$\Delta T > \Delta x_{n_M} + \Delta t_{n_t}$}\Comment{Identify stability region}
            \For{$i = 1$ to $n_M$}
                \For{$j = 1$ to $n_t$}
                    \State Solve the inverse problem on the training set, $\hat{T}_{train}$, by using Algorithm~\ref{alg:inverseSolver_linear_heatNorm} with $\Delta x_i$, $\Delta t_j$ and $p_g = p_g^0$
                    \State Compute $m_{S}[\Delta x_i, \Delta t_j, p_g^0]$ by (\ref{eq:discrSelection_mesurementsDiscrepancy})
                \EndFor
            \EndFor
            \State Compute $\Delta T$ by (\ref{eq:discrSelection_stabilityCondition})
            \If{$\Delta T > \Delta x_{n_M} + \Delta t_{n_t}$}
                \State $p_g^0 = 10 p_g^0$
            \EndIf
        \EndWhile
        \State Choose $\Delta x^0$ and $\Delta t^0$ corresponding to $\min_{i,j}m_{S}[\Delta x_i, \Delta t_j, p_g^0]$ for $1 \leq i \leq n_M$, $1 \leq j \leq n_t$ 
        \State $l = 1$, $f = 0$
        \While{$f = 0$}
            \State Find $p_g^l = \argmin_{p_g}(m_{S}[\Delta x^{l-1}, \Delta t^{l-1}, p_g])$ \label{alg:meshSelection_minimization}
            \For{$i = 1$ to $n_M$}
                \For{$j = 1$ to $n_t$}
                    \State Solve the inverse problem on the training set, $\hat{T}_{train}$, by using Algorithm~\ref{alg:inverseSolver_linear_heatNorm} with $\Delta x_i$, $\Delta t_j$ and $p_g = p_g^l$
                    \State Compute $m_{S}[\Delta x_i, \Delta t_j, p_g^l]$ by (\ref{eq:discrSelection_mesurementsDiscrepancy})
                \EndFor
            \EndFor
            \State Choose $\Delta x^l$ and $\Delta t^l$ corresponding to $\min_{i,j}m_{S}[\Delta x_i, \Delta t_j, p_g^l]$ for $1 \leq i \leq n_M$, $1 \leq j \leq n_t$\label{alg:meshSelection_chooseMeshDt}
            \If{$\Delta x^l = \Delta x^{l-1} \And \Delta t^l = \Delta t^{l-1}$}
                \State $f = 1$
            \EndIf
            \State $l = l +1$
        \EndWhile
    \end{algorithmic}
\end{algorithm}

This method allows a data-driven, automated selection of the discretization refinement and the $p_g$ parameter.
This result comes to the cost of computing $n_M \cdot n_t$ solutions to the inverse problem at each iteration.
If the available memory allows it, we can keep in the memory the results of the offline computations related to each discretization.
Otherwise, we have the recompute every time these offline phases.
However, we designed this algorithm to be used offline.
Then, even if it is computationally expensive, we can run it before the caster starts to work and it only requires the dataset of thermocouples measurements $\hat{T}_{train}$.

\section{Numerical Tests}
\label{section:numerical}

To test the previously developed methodologies, we design different benchmark cases.
Through these tests, we validate and analyze the performances of the inverse solvers that we proposed in the previous sections.
We design two benchmarks to perform different tests for the inverse problem solvers proposed in Section~\ref{section:inverse}.

Notice that all the computations are performed in ITHACA-FV\cite{Stabile2017,ithaca} which is a C++ library based on OpenFOAM\cite{Moukalled2015} developed at SISSA Mathlab.

\subsection{Benchmark 1}\label{sec:unsteadyInverseBenchmark_linear}

In this section, we test the performances of the inverse solvers proposed in  Section~\ref{section:inverse} in the reconstruction of a linear in time heat flux, which is non-linear in space.

\subsubsection{Setup of the Test Case}

To design a numerical test case for the inverse problems, we proceed as follows: we arbitrarily define a boundary heat flux, $g_{tr}(\mathbf{x},t)$, and the thermocouples positions, $\Psi$, and sampling frequency, $f_{samp}$.
Then, we solve the direct Problem~\ref{prob:directUnsteady} associated with $g_{tr}(\mathbf{x},t)$ in the time domain $(0, t_f]$, obtaining the related temperature field.
Finally, we use its values at the thermocouples points and sampling times as input measurements to the inverse problem, $\hat{\mathbf{T}}$. 
Using this approach, we are able to analyze the inverse problem performance in the reconstruction of the boundary heat flux, $g_{tr}(\mathbf{x},t)$.

Table~\ref{tab:unsteadyNumericalBenchmark_parameters} shows the geometrical and physical parameters selected for the present benchmark case.
In the attempt of mimicking the real industrial situation of estimating the boundary heat flux in a plate of a CC mold, these parameters are close to real industrial values.
We use the computational domain in Figure~\ref{fig:unsteadyAnalyticalBenchmarkDomainSchematic}(a) where $L$, $W$ and $H$ are set as in a real mold plate.
Finally, Figure~\ref{fig:unsteadyAnalyticalBenchmarkDomainSchematic}(b) shows the thermocouple locations.

\begin{figure}[!htb]
        \centering
        \begin{subfigure}[c]{.5\linewidth}
            \captionsetup{width=.85\linewidth}
            \includegraphics[width=.95\textwidth]{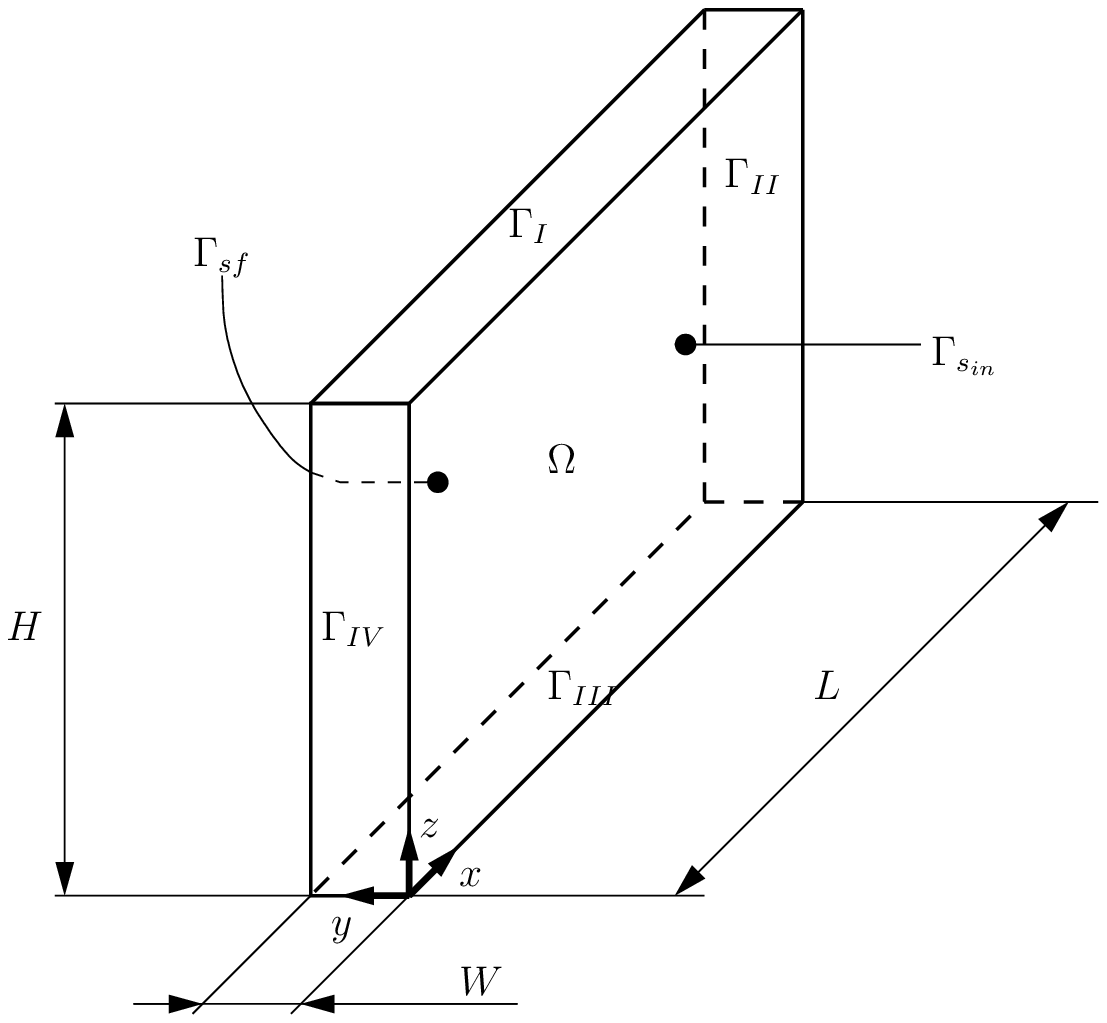}
            \caption{Domain}
        \end{subfigure}%
        \begin{subfigure}[c]{.5\linewidth}
            \captionsetup{width=.85\linewidth}
            \includegraphics[width=.95\textwidth]{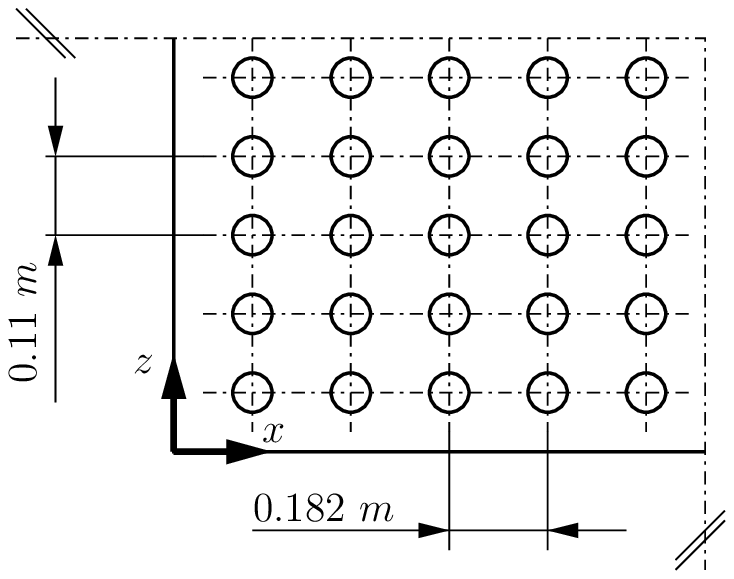}
            \caption{Thermocouples positions}
        \end{subfigure}%
    \caption{Schematic of the domain used in the benchmark test cases (a) and position of the 100 thermocouples at the plane $y=0.02~m$ (b) used for the inverse solver tests (images taken from Morelli et al.\cite{Morelli2021}).}
    \label{fig:unsteadyAnalyticalBenchmarkDomainSchematic}
\end{figure}

\begin{table}
\centering
    \caption{Geometrical and physical parameters used for the benchmark test cases.}
    \label{tab:unsteadyNumericalBenchmark_parameters}
    \begin{tabular}{ ll }
        \hline
        \textbf{Parameter}    &   \textbf{Value}\\
        \hline
        Thermal conductivity, $k_s$     & $383$ W/(m K) \\
        Density, $\rho_s$ 	        & $8940$ kg/m\textsuperscript{3}\\
        Specific heat capacity, $C_{p_s}$ 	& $390$ J/(kg\ K)\\
        Heat transfer coefficient, $h$  & $5.66e4$ W/(m\textsuperscript{2} $\cdot$ K)\\
        Water temperature, $T_f$        & $350$ K \\
        Initial condition, $T_0$        & $350$ K\\
        $L$                             & $2$ m\\
        $W$                             & $0.1$ m\\
        $H$                             & $1.2$ m\\
        Sampling frequency, $f_{samp}$  & $1$ Hz\\
        $a$	                        & $1100$ W/(m\textsuperscript{2}$\cdot$ s) \\
        $b$	                        & $1200$ W/(m\textsuperscript{2}$\cdot$ s) \\
        $c$ 	                        & $3000$ W/(m\textsuperscript{2}$\cdot$ s) \\
        Final time, $t_f$               & $50$ s\\
        \hline
    \end{tabular}
\end{table}

To test the effect of the regularization by discretization, we use different space and time discretizations.
For the time discretization, we use homogeneous time discretization with $\Delta t = 0.1s$, $0.2s$, $0.25s$ and $0.5s$.
For the space discretization, we use the uniform, structured, orthogonal, hexahedral meshes presented in Table~\ref{tab:unsteadyNumericalBenchmark_meshes}.

\begin{table}[htb]
\centering
    \caption{Summary of the different meshes used in the numerical tests.}
    \label{tab:unsteadyNumericalBenchmark_meshes}
    \begin{tabular}{ |l|c|c|c|c|c| }
        \hline
         & \textbf{Mesh 1}    &  \textbf{Mesh 2} &  \textbf{Mesh 3} &  \textbf{Mesh 4} &  \textbf{Mesh 5} \\
        \hline
        Number of elements &  $1.7e5$   & $4.5e4$   & $2.1e4$   & $7.5e3$  & $1.5e3$ \\ 
        \hline
    \end{tabular}
\end{table}

For this test case, we select the heat flux $g_{tr}$ to be linear in time and quadratic in space.
In particular, given $g_1(\mathbf{x}) = bz^2 + c$ with $b$ and $c$ as in Table~\ref{tab:unsteadyNumericalBenchmark_parameters}, we select the heat flux
\begin{equation}
    g_{tr}(\mathbf{x},t) = - k_s \left(0.5 t g_1(\mathbf{x}) + g_1(\mathbf{x}) \right).
\end{equation}

Moreover, to analyze the performance of the inverse solvers, we introduce the relative error
\begin{equation}
    e_{rel}(\mathbf{x},t) := \frac{g_{tr}(\mathbf{x},t) - g_c(\mathbf{x},t)}{g_{tr}(\mathbf{x},t)},
    \label{eq:unsteadyBenchmark_relativeError}
\end{equation}
where $g_c$ is the heat flux computed with the different methodologies described in Section~\ref{section:inverse} that will be tested in the following.

\subsubsection{Effect of Time and Space Discretization Refinement}
\label{sec:benchmark_spaceTimeRef1}

Now, provided all the details for the first benchmark setup, we can proceed presenting the results.
Firstly, we show the effects of mesh and time discretization refinement.
To do it, we do not add any noise to the temperature measurements and do not apply any regularization in the solution of the linear systems solving them by a LU factorization with full pivoting.

We start by analyzing the case in which we minimize the functional $S_1^k$ (i.e. $p_g = 0$).
We show in Figure~\ref{fig:unsteadyNumericalBenchmarkInverseLinear_constant_timeSpaceRefinement} the maximum and mean value of the $L^2$- and $L^\infty$-norm of the relative error, $e_{rel}$, in the interval $(0, t_f]$, as the time and space discretization changes for Algorithm~\ref{alg:inverseSolver_constant} (i.e. piecewise constant approximation in time of the heat flux).

\begin{figure}[!htb]
    \begin{subfigure}{\linewidth}
        \centering
        \begin{subfigure}[c]{.4\linewidth}
            \captionsetup{width=.85\linewidth}
            \includegraphics[width=.95\textwidth]{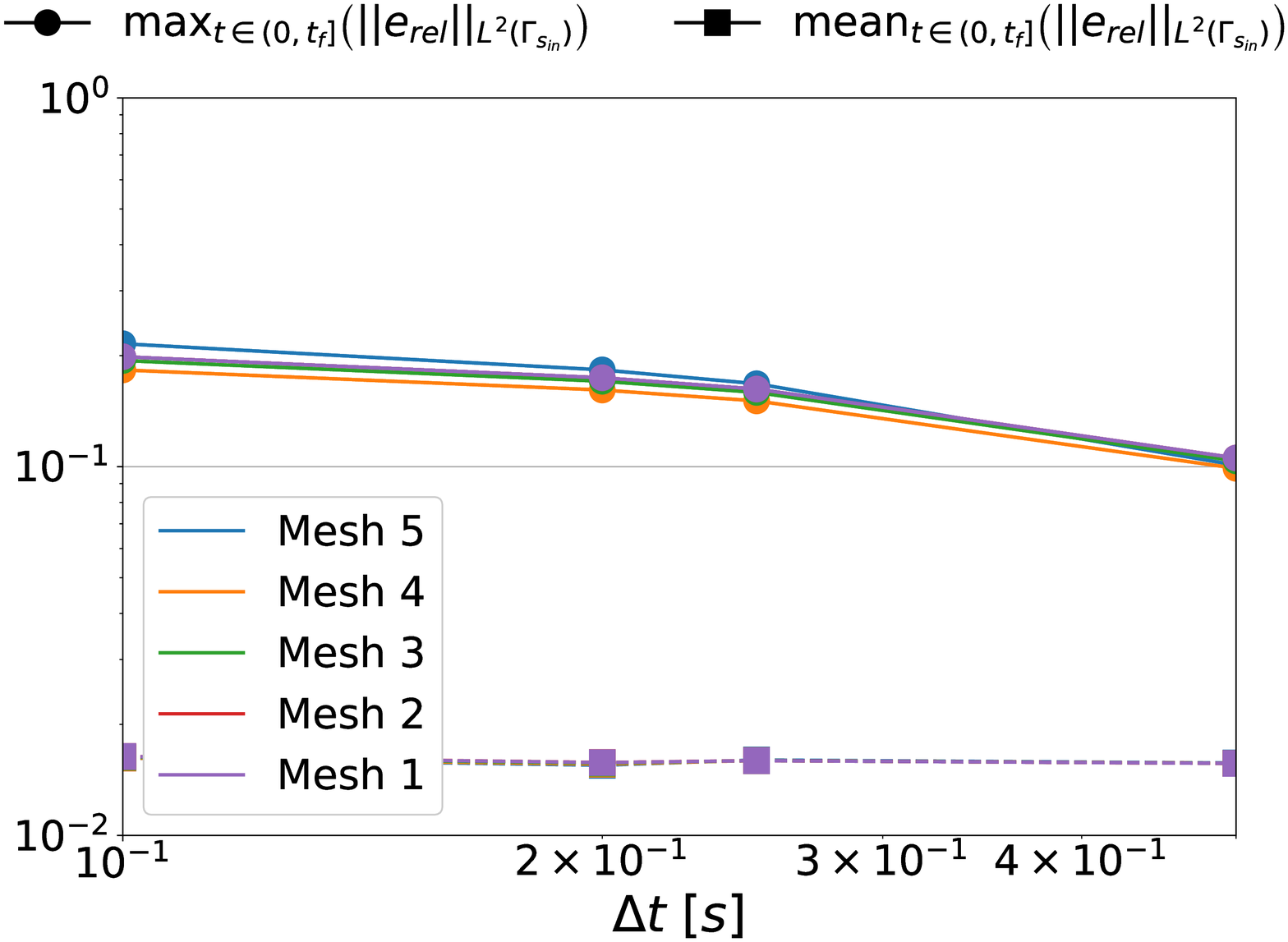}
            \caption{$L^2$-norm of the relative error as a function of $\Delta t$}
        \end{subfigure}%
        \begin{subfigure}[c]{.4\linewidth}
            \captionsetup{width=.85\linewidth}
            \includegraphics[width=.95\textwidth]{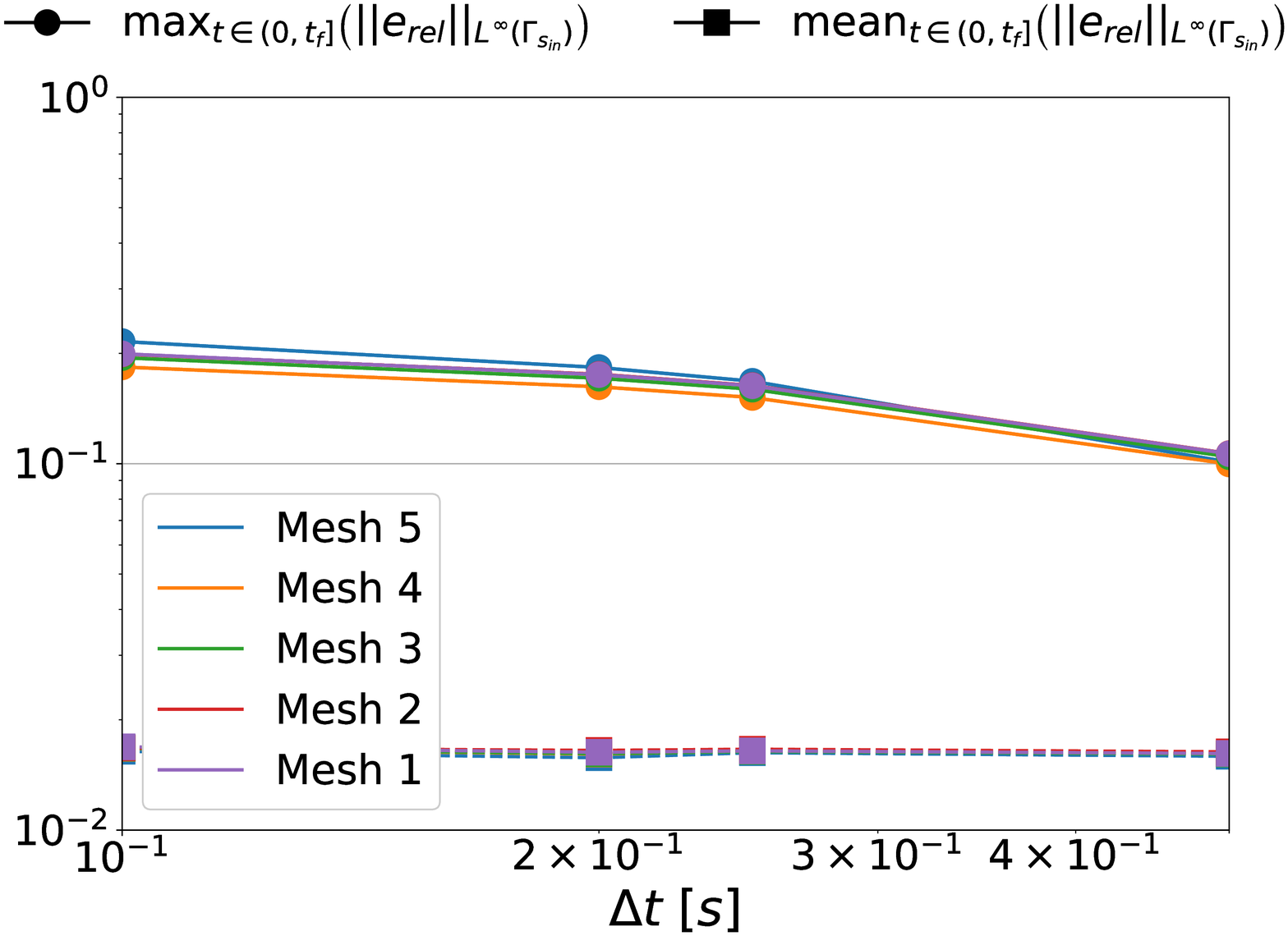}
            \caption{$L^\infty$-norm of the relative error as a function of $\Delta t$}
        \end{subfigure}%
    \end{subfigure}%
    \\
    \begin{subfigure}{\linewidth}
        \centering
        \begin{subfigure}[c]{.4\linewidth}
            \captionsetup{width=.85\linewidth}
            \includegraphics[width=.95\textwidth]{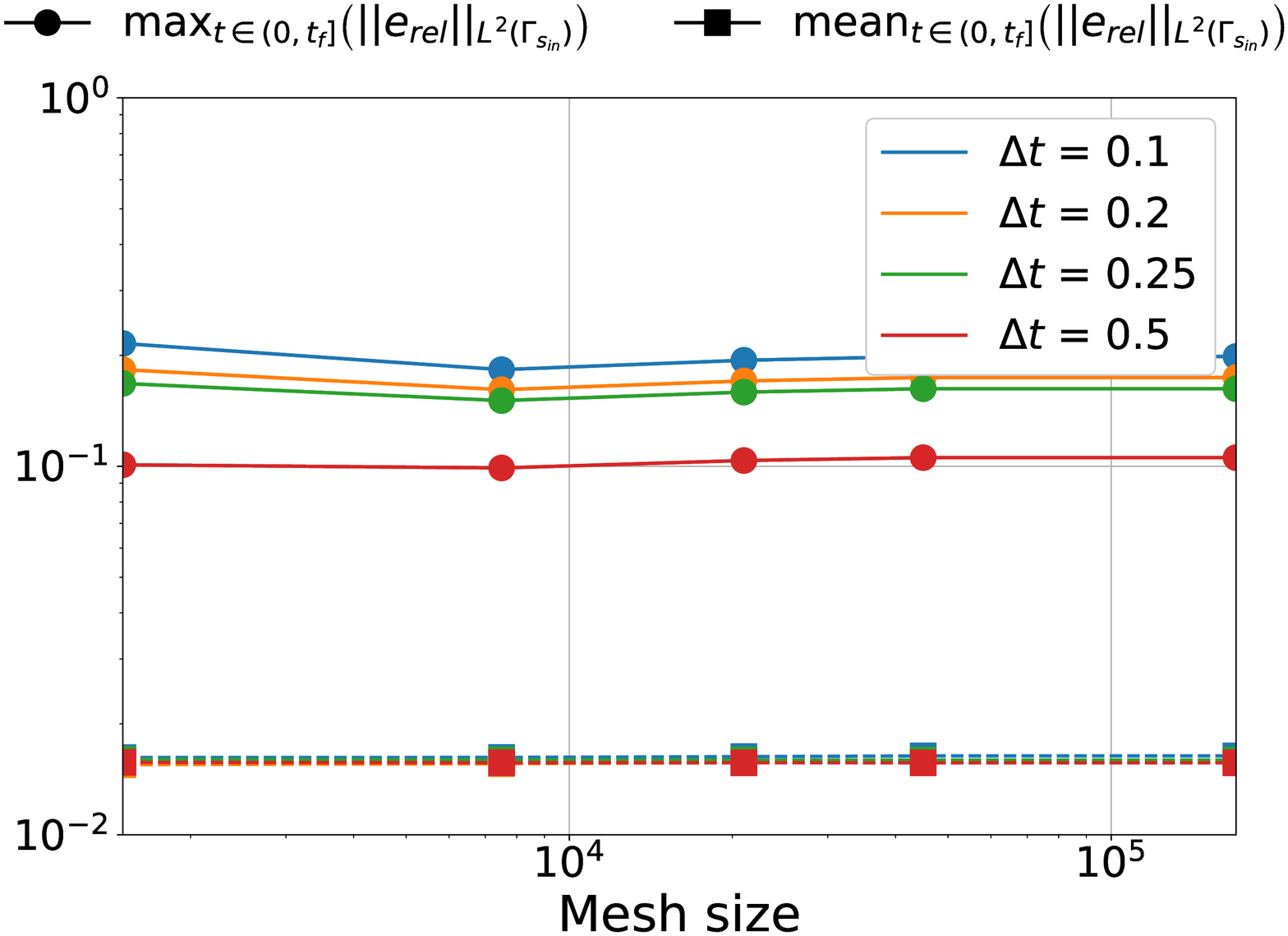}
            \caption{$L^2$-norm of the relative error as a function of the mesh size}
        \end{subfigure}%
        \begin{subfigure}[c]{.4\linewidth}
            \captionsetup{width=.85\linewidth}
            \includegraphics[width=.95\textwidth]{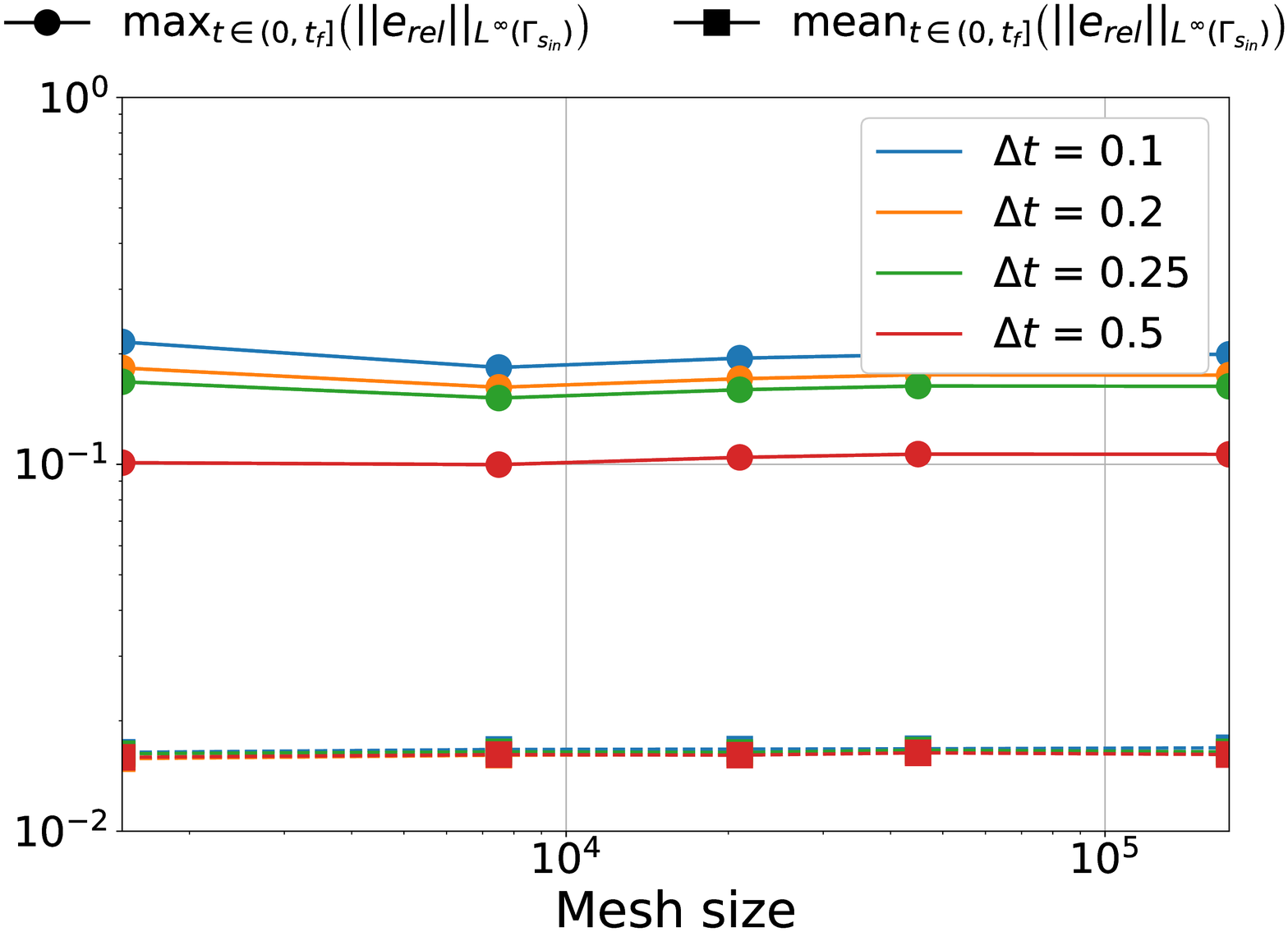}
            \caption{$L^\infty$-norm of the relative error as a function of the mesh size}
        \end{subfigure}%
    \end{subfigure}%
    \caption{Maximum (circles) and mean (squares) values of the $L^2$- and $L^\infty$-norm of the relative error, $e_{rel}$, in the interval $(0, t_f]$, for Benchmark 1 as the time and space discretization changes for Algorithm~\ref{alg:inverseSolver_constant} (piecewise constant time approximation of the heat flux and $p_g = 0 \frac{K^2}{W^2}$).}
    \label{fig:unsteadyNumericalBenchmarkInverseLinear_constant_timeSpaceRefinement}
\end{figure}

From the figures, we appreciate on one side that the time discretization coarsening has very little effects on Algorithm~\ref{alg:inverseSolver_constant} with a small decrease of the error as $\Delta t$ increases.
On the other, the space discretization does not have any effect on this inverse solver.

We now perform the same test for the piecewise linear time approximation of Algorithm~\ref{alg:inverseSolver_linear}.
Similarly, Figure~\ref{fig:unsteadyNumericalBenchmarkInverseLinear_linear_timeSpaceRefinement} shows the maximum and mean value of the $L^2$- and $L^\infty$-norm of the relative error, $e_{rel}$, in the interval $(0, t_f]$, as the time and space discretization changes.

\begin{figure}[!htb]
    \begin{subfigure}{\linewidth}
        \centering
        \begin{subfigure}[c]{.4\linewidth}
            \captionsetup{width=.85\linewidth}
            \includegraphics[width=.95\textwidth]{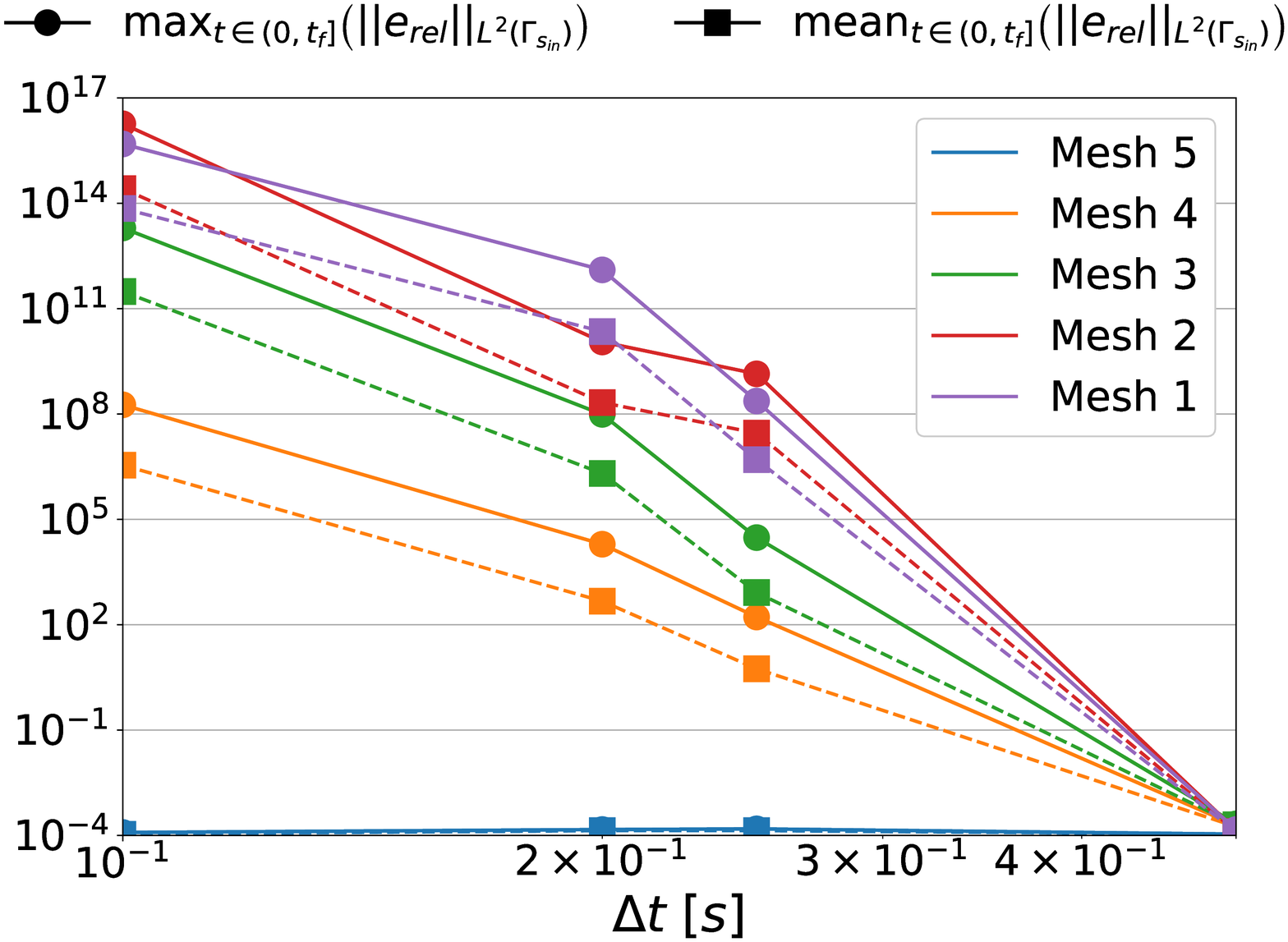}
            \caption{$L^2$-norm of the relative error as a function of $\Delta t$}
        \end{subfigure}%
        \begin{subfigure}[c]{.4\linewidth}
            \captionsetup{width=.85\linewidth}
            \includegraphics[width=.95\textwidth]{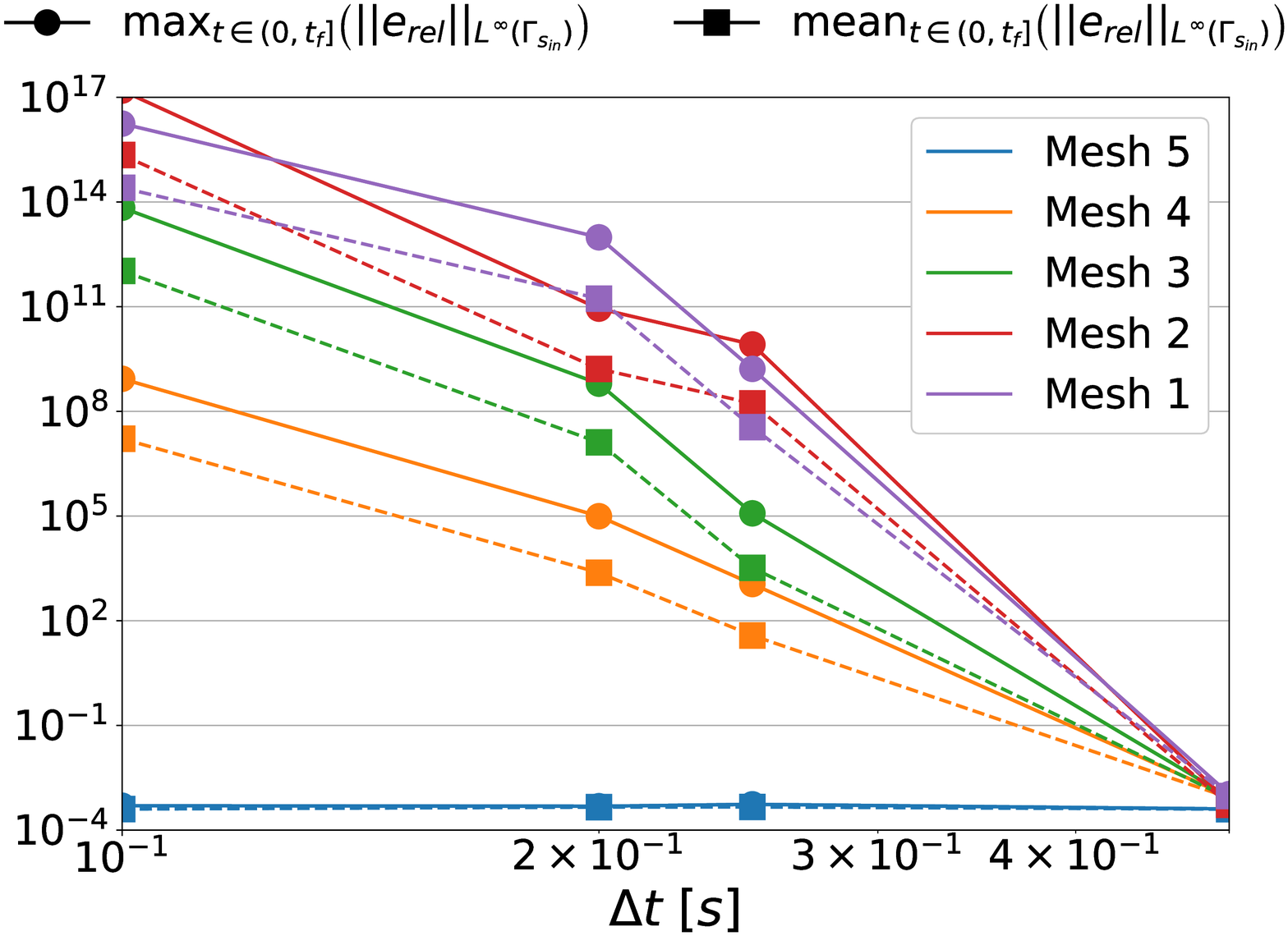}
            \caption{$L^\infty$-norm of the relative error as a function of $\Delta t$}
        \end{subfigure}%
    \end{subfigure}%
    \\
    \begin{subfigure}{\linewidth}
        \centering
        \begin{subfigure}[c]{.4\linewidth}
            \captionsetup{width=.85\linewidth}
            \includegraphics[width=.95\textwidth]{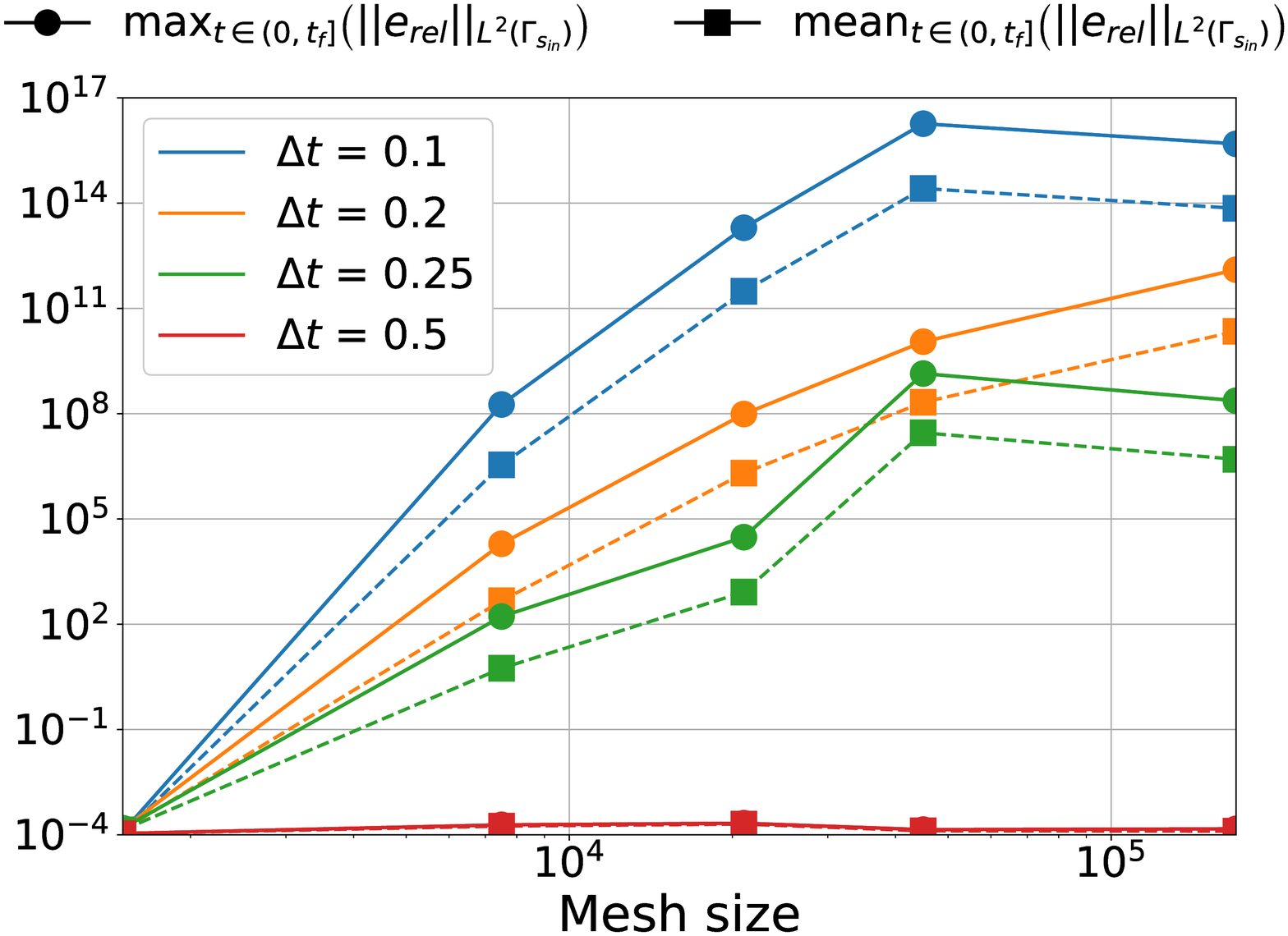}
            \caption{$L^2$-norm of the relative error as a function of the mesh size}
        \end{subfigure}%
        \begin{subfigure}[c]{.4\linewidth}
            \includegraphics[width=.95\textwidth]{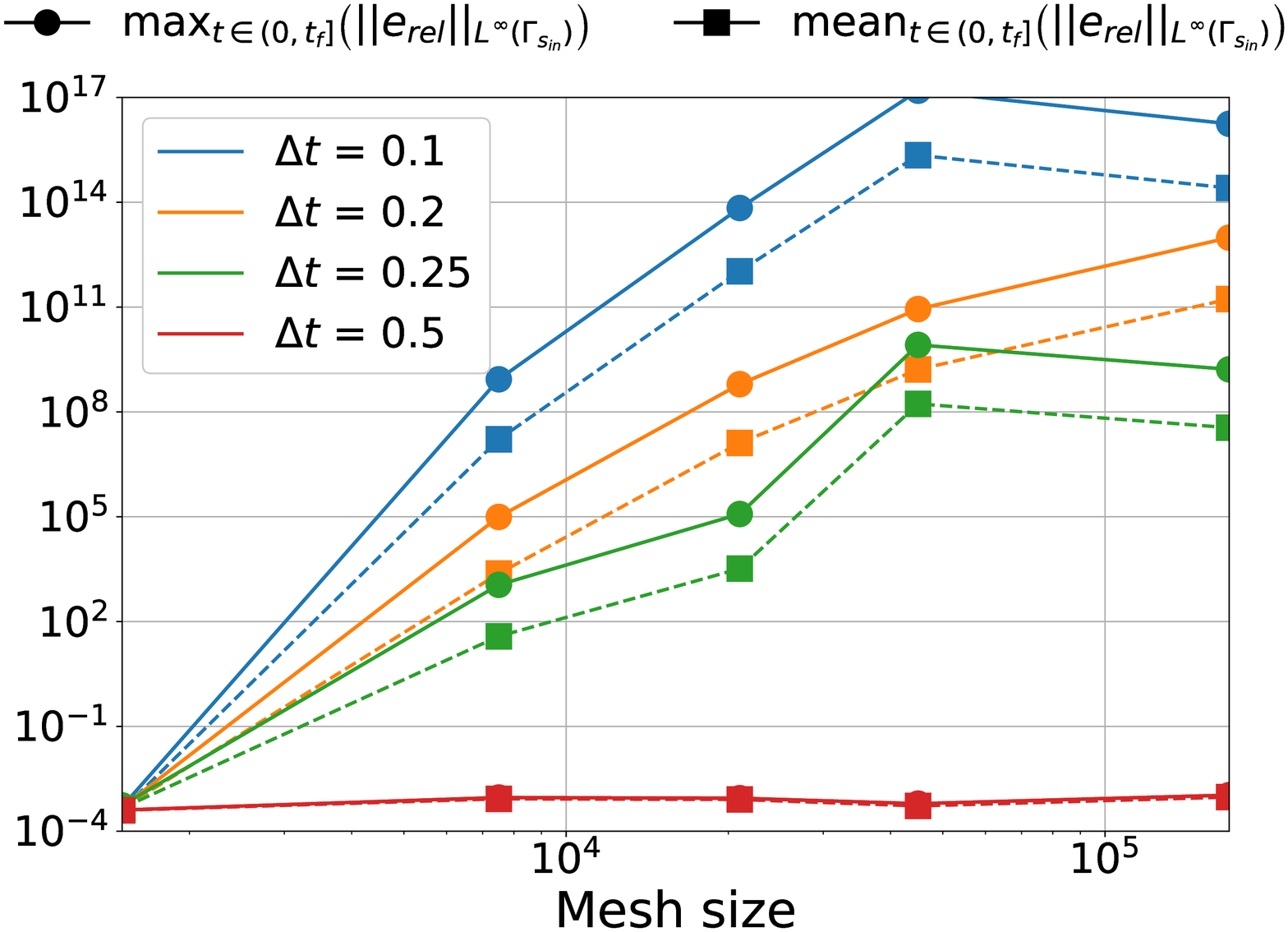}
            \captionsetup{width=.85\linewidth}
            \caption{$L^\infty$-norm of the relative error as a function of the mesh size}
        \end{subfigure}%
    \end{subfigure}%
    \caption{Maximum (circles) and mean (squares) values of the $L^2$- and $L^\infty$-norm of the relative error, $e_{rel}$, in the interval $(0, t_f]$, for Benchmark 1 as the time and space discretization changes for Algorithm~\ref{alg:inverseSolver_linear} (piecewise linear time approximation of the heat flux and $p_g = 0 \frac{K^2}{W^2}$).}
    \label{fig:unsteadyNumericalBenchmarkInverseLinear_linear_timeSpaceRefinement}
\end{figure}

In this setting, the obtained results are very different from the previous case.
First of all, we notice a massive influence of both the space and time discretization refinement on the performances of the inverse solver.
As anticipated in Section~\ref{sec:unsteadyRegularization}, the regularization by discretization plays an important role as the algorithm performances are improved by several orders of magnitude by the coarsening of the discretization.
Moreover, when comparing the results for Algorithm~\ref{alg:inverseSolver_constant} and \ref{alg:inverseSolver_linear}, we notice that the piecewise linear solver is able to outperform the constant one by three orders of magnitude but is also very unstable depending on the discretization.

To better understand the behavior of this inverse solver, Figure~\ref{fig:unsteadyNumericalBenchmarkInverseLinear_linear_errorForDifferentDeltat} illustrates the $L^2$-norm of the relative error, $e_{rel}$, as a function of time for mesh 3 with different $\Delta t$.
From these results, we see that the high errors shown in Figure~\ref{fig:unsteadyNumericalBenchmarkInverseLinear_linear_timeSpaceRefinement} are caused by diverging oscillations in the algorithm.
However, we also notice from the figure that, coarsening the time discretization, monotonically reduces such instability until achieving a stable solution, eventually.

\begin{figure}[!htb]
    \centering
    \includegraphics[width=.4\textwidth]{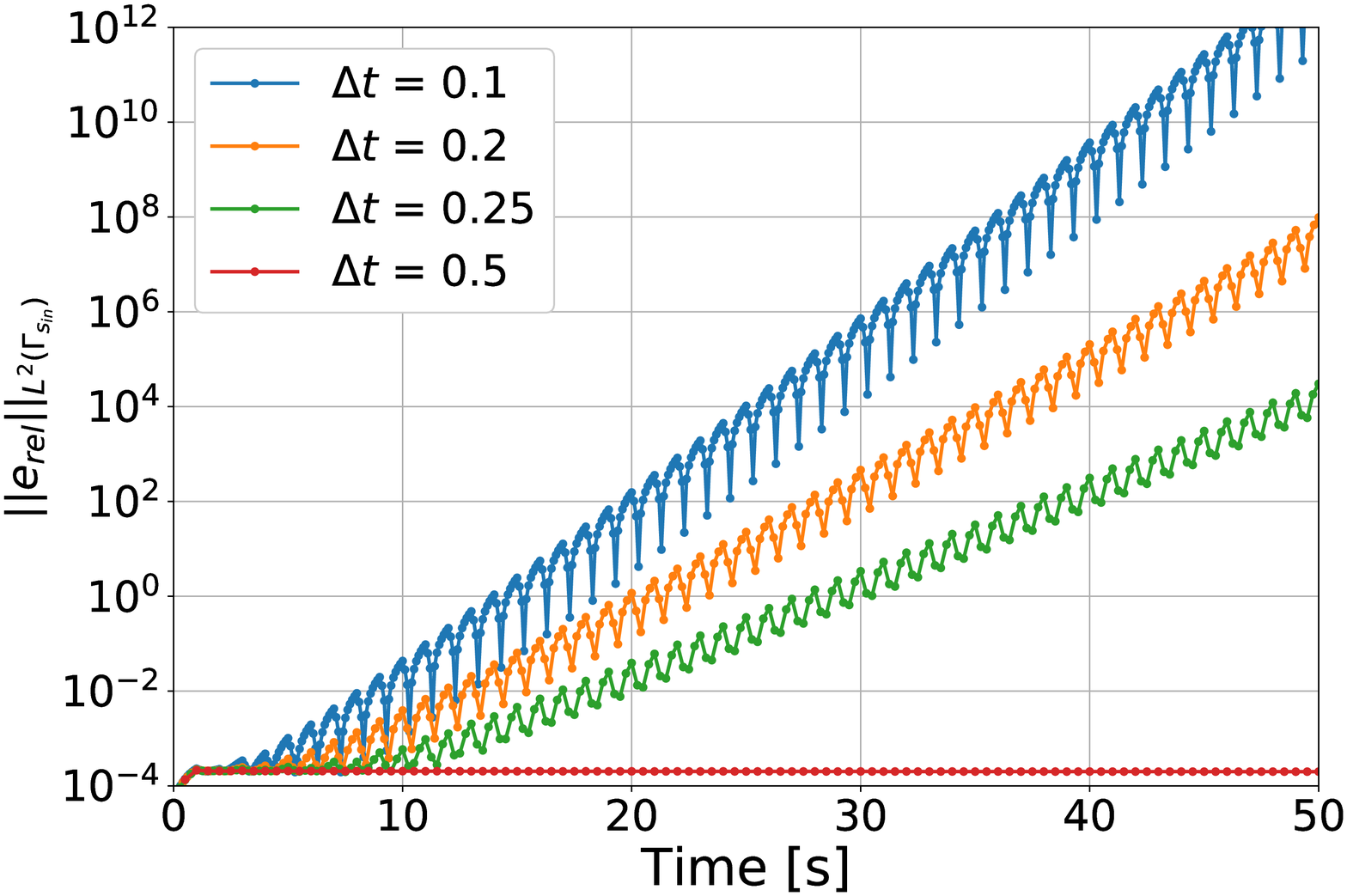}
    \caption{$L^2$-norm of the relative error, $e_{rel}$, in Benchmark 1 for Algorithm~\ref{alg:inverseSolver_linear} (piecewise linear time approximation of the heat flux and $p_g = 0 \frac{K^2}{W^2}$).
    The presented results are obtained with Mesh 3.}
    \label{fig:unsteadyNumericalBenchmarkInverseLinear_linear_errorForDifferentDeltat}
\end{figure}

\subsubsection{Effect of Cost Functional Parameter, $p_g$}
\label{sec:unsteadyBenchmarkLinear_pgEffects}

In this section, we analyze the role that the cost functional parameter, $p_g$, in (\ref{eq:sequentialUnsteadyInverseProblem_heatNorm_functional}), has on the performance of the proposed inverse solvers.
To do it, we solve several times this benchmark case using the different meshes of Table~\ref{tab:unsteadyNumericalBenchmark_meshes} and different timestep sizes.
Then, we plot the maximum and mean value of the $L^2$-norm of the relative error, $e_{rel}$, over the entire interval $t=(0, t_f]$ as a function of the cost functional parameter, $p_g$.

We start with Algorithm~\ref{alg:inverseSolver_constant_heatNorm} (i.e. piecewise constant approximation in time of the heat flux).
Figure~\ref{fig:unsteadyNumericalBenchmarkInverseconstant_constant_costFunction_differentDt} shows the obtained results for different timestep sizes and a fixed space discretization.

\begin{figure}[!htb]
    \begin{subfigure}{\linewidth}
        \centering
        \begin{subfigure}[c]{.4\linewidth}
            \captionsetup{width=.85\linewidth}
            \includegraphics[width=.95\textwidth]{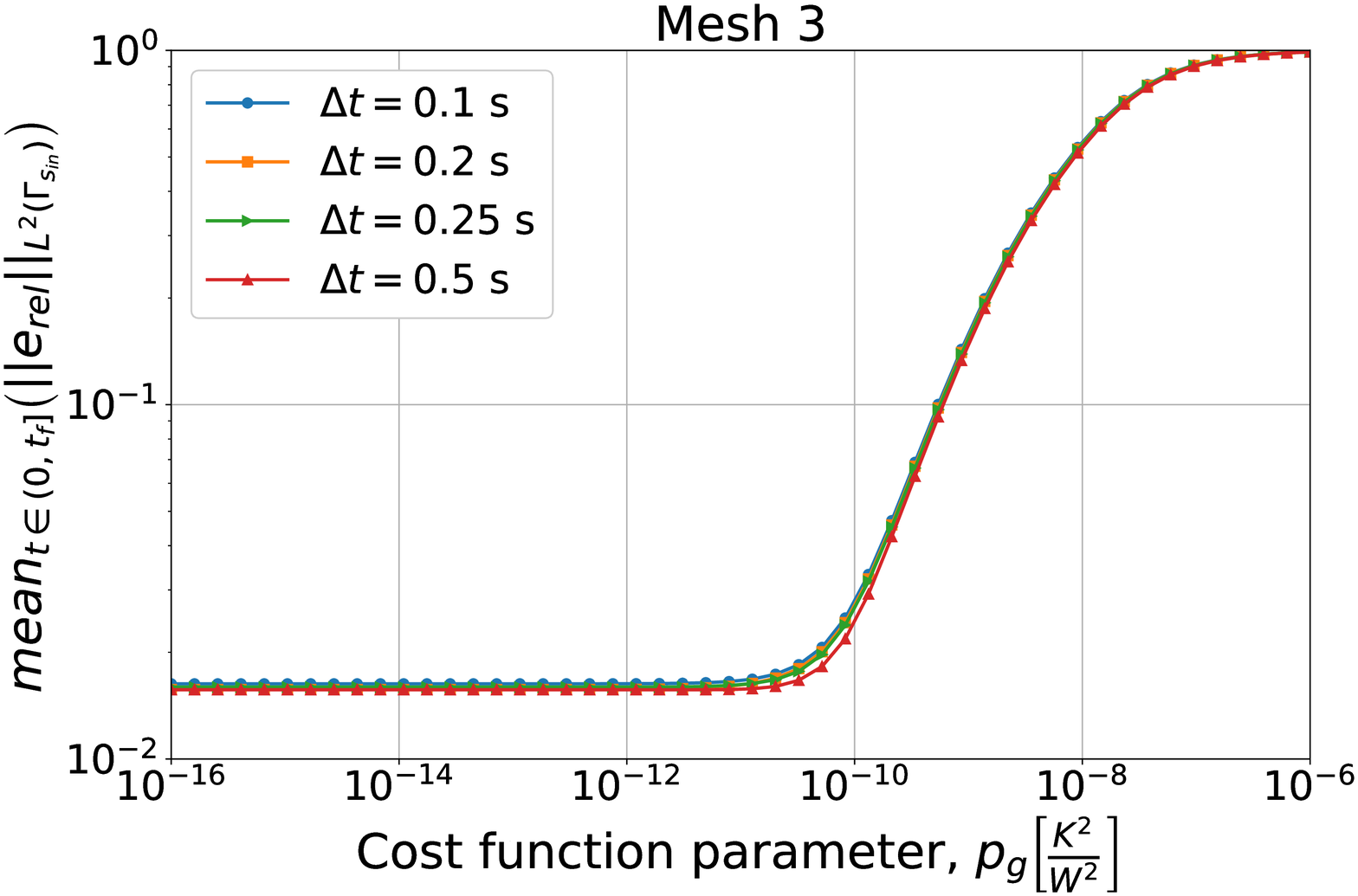}
            \caption{Mean relative error norm.}
        \end{subfigure}%
        \begin{subfigure}[c]{.4\linewidth}
            \captionsetup{width=.85\linewidth}
            \includegraphics[width=.95\textwidth]{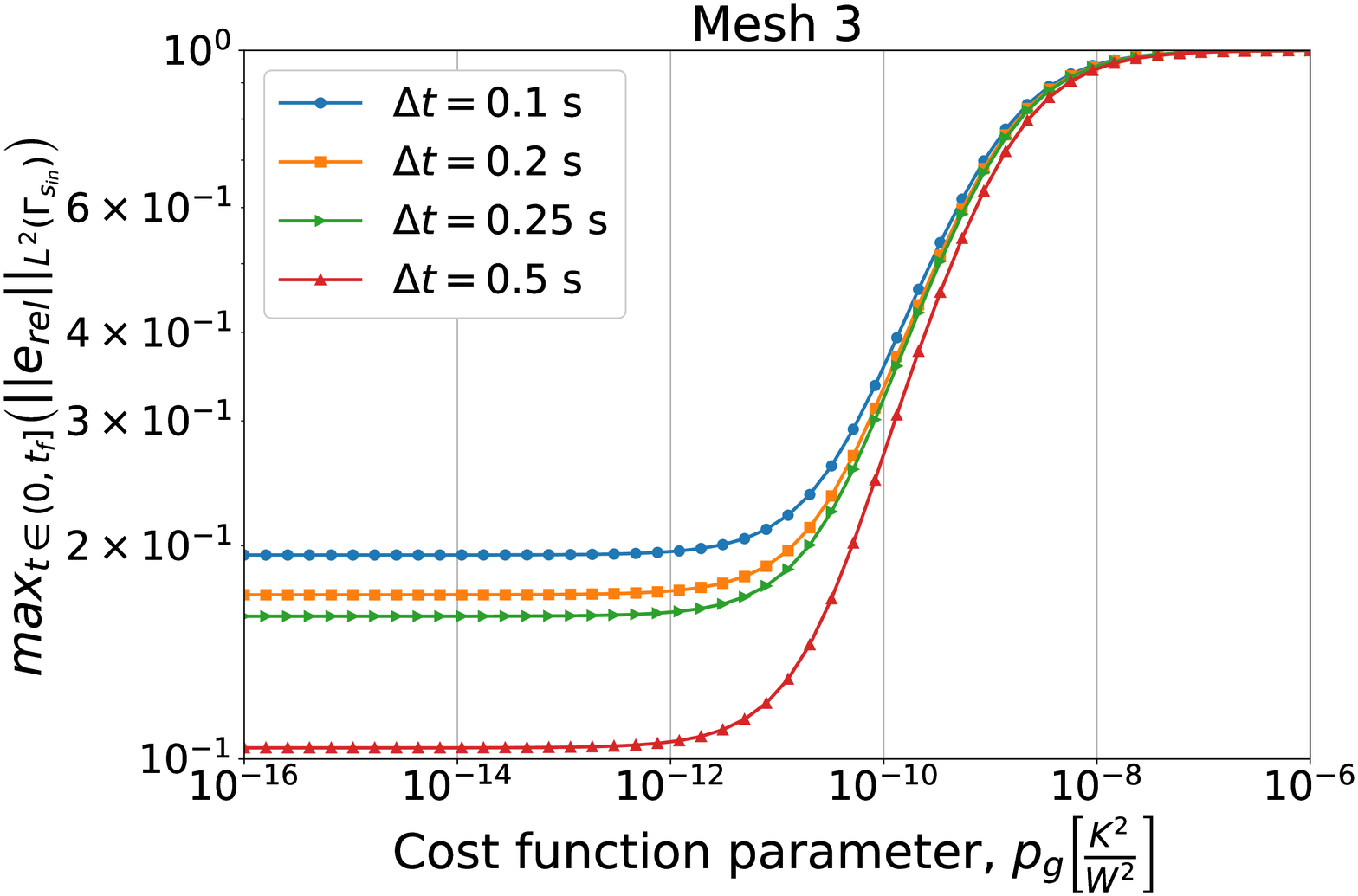}
            \caption{Max. relative error norm.}
        \end{subfigure}%
    \end{subfigure}%
    \caption{Mean (a) and maximum (b) values of the $L^2$-norm of the relative error, $e_{rel}$, in the interval $(0, t_f]$, for Benchmark 1 as the value of the cost function parameter, $p_g$, changes for Algorithm~\ref{alg:inverseSolver_constant_heatNorm} (piecewise constant time approximation of the heat flux).
    We show the results for Mesh 3 and different $\Delta t$.}
    \label{fig:unsteadyNumericalBenchmarkInverseconstant_constant_costFunction_differentDt}
\end{figure}

From the results, we notice that increasing the value of $p_g$ monotonically decreases the quality of the reconstruction.
Moreover, it is true for all considered $\Delta t$ with a slight improvement of the performances as the time discretization gets coarser.

Now, we perform a similar test but this time we keep $\Delta t = 0.25~s$ and test the different meshes of Table~\ref{tab:unsteadyNumericalBenchmark_meshes}.
We illustrate in Figure~\ref{fig:unsteadyNumericalBenchmarkInverseconstant_constant_costFunction_differentMesh} the obtained results.

\begin{figure}[!htb]
    \begin{subfigure}{\linewidth}
        \centering
        \begin{subfigure}[c]{.4\linewidth}
            \captionsetup{width=.85\linewidth}
            \includegraphics[width=.95\textwidth]{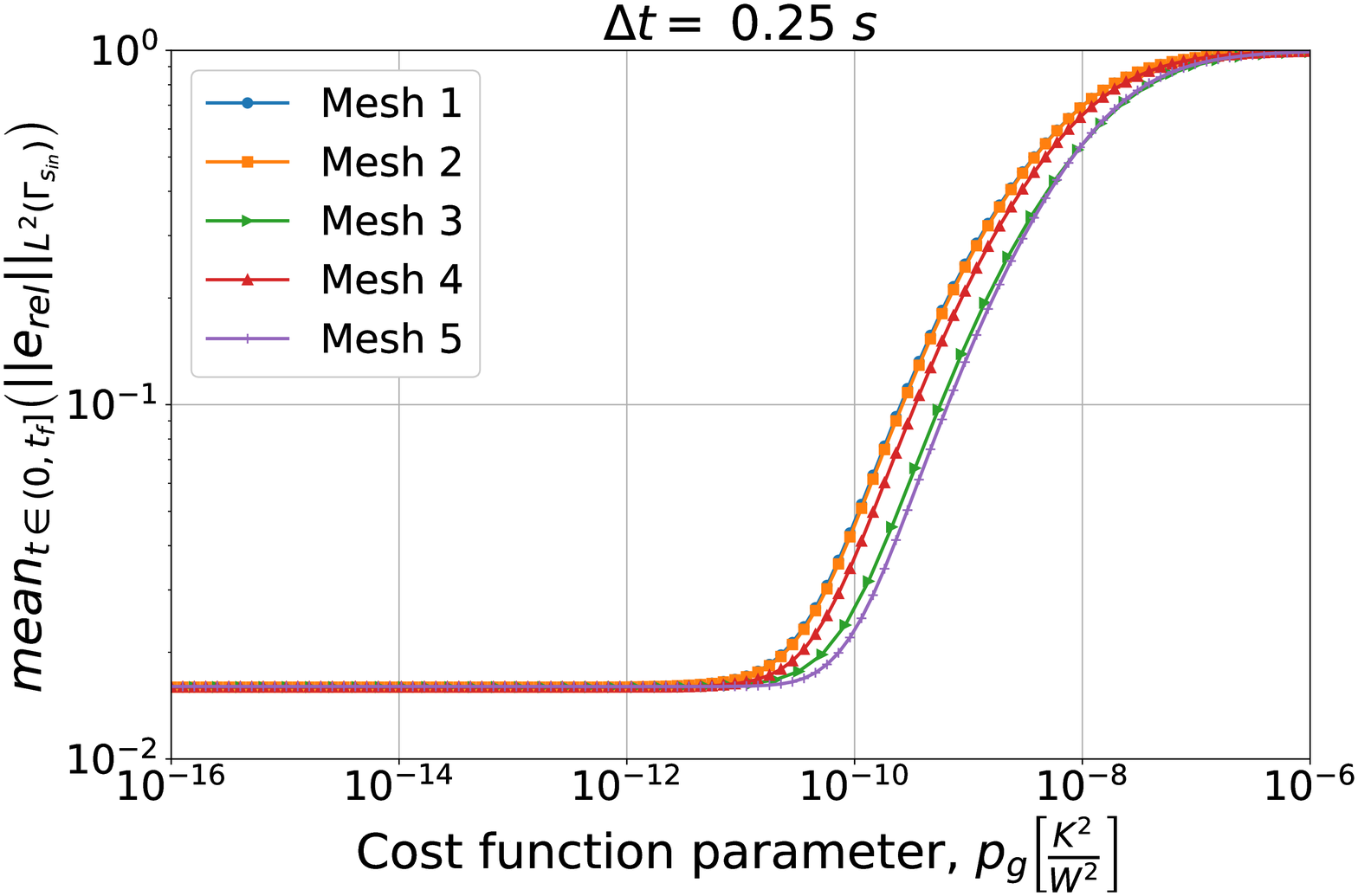}
            \caption{Mean relative error norm.}
        \end{subfigure}%
        \begin{subfigure}[c]{.4\linewidth}
            \captionsetup{width=.85\linewidth}
            \includegraphics[width=.95\textwidth]{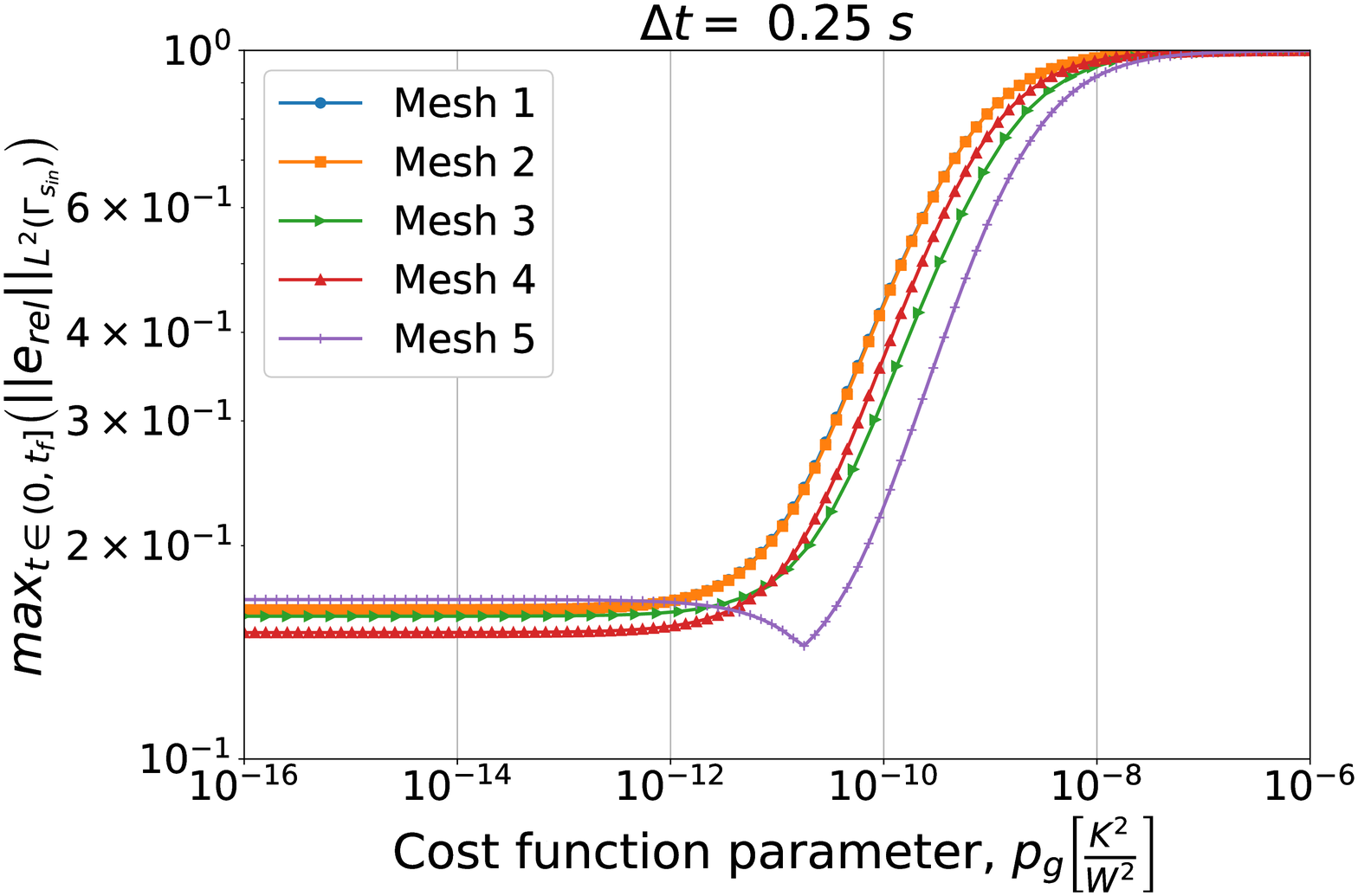}
            \caption{Max. relative error norm.}
        \end{subfigure}%
    \end{subfigure}%
    \caption{Mean (a) and maximum (b) values of the $L^2$-norm of the relative error, $e_{rel}$, in the interval $(0, t_f]$, for Benchmark 1 as the value of the cost function parameter, $p_g$, changes for Algorithm~\ref{alg:inverseSolver_constant_heatNorm} (piecewise constant time approximation of the heat flux).
    We show the results for $\Delta t = 0.25~s$ and different meshes.}
    \label{fig:unsteadyNumericalBenchmarkInverseconstant_constant_costFunction_differentMesh}
\end{figure}

This figure confirms that Algorithm~\ref{alg:inverseSolver_constant_heatNorm} is badly affected by the implementation of the second term of (\ref{eq:sequentialUnsteadyInverseProblem_heatNorm_functional}).
In fact, its performance dramatically deteriorates as soon as this term begins to play a role (i.e. $p_g \gtrsim 10^{-12} K^2/W^2$).
Moreover, the results are almost independent from the discretization refinement.
This further confirms the insensibility of this algorithm from the used discretization.

We continue by performing the same kind of tests on Algorithm~\ref{alg:inverseSolver_constant_heatNorm} (i.e. piecewise linear approximation of the heat flux in time).
We start by testing different timestep sizes while using Mesh 3 for the space discretization.
We present the results in Figure~\ref{fig:unsteadyNumericalBenchmarkInverseLinear_linear_costFunction_differentDt}.

\begin{figure}[!htb]
    \begin{subfigure}{\linewidth}
        \centering
        \begin{subfigure}[c]{.4\linewidth}
            \captionsetup{width=.85\linewidth}
            \includegraphics[width=.95\textwidth]{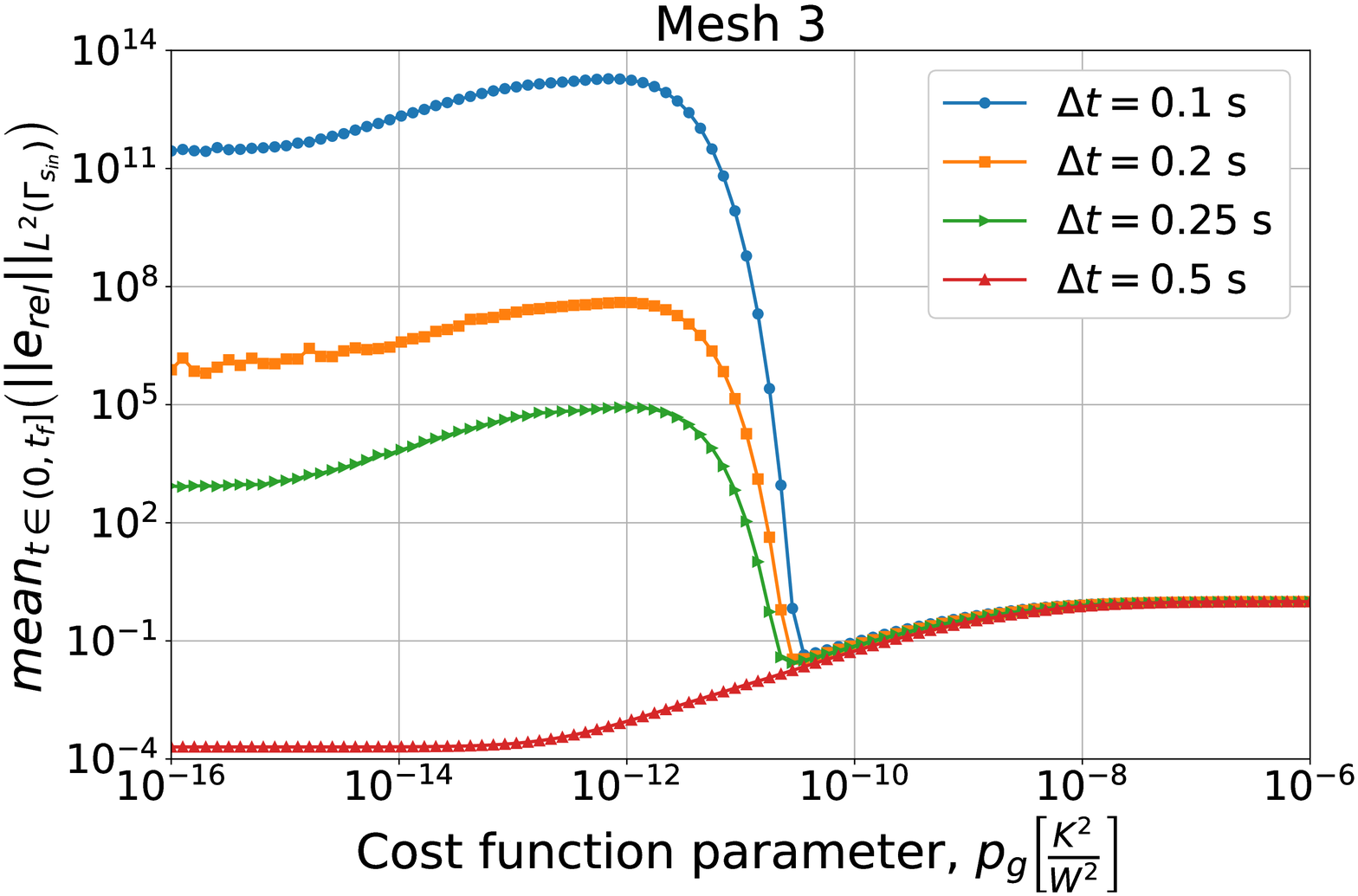}
            \caption{Mean relative error norm.}
        \end{subfigure}%
        \begin{subfigure}[c]{.4\linewidth}
            \captionsetup{width=.85\linewidth}
            \includegraphics[width=.95\textwidth]{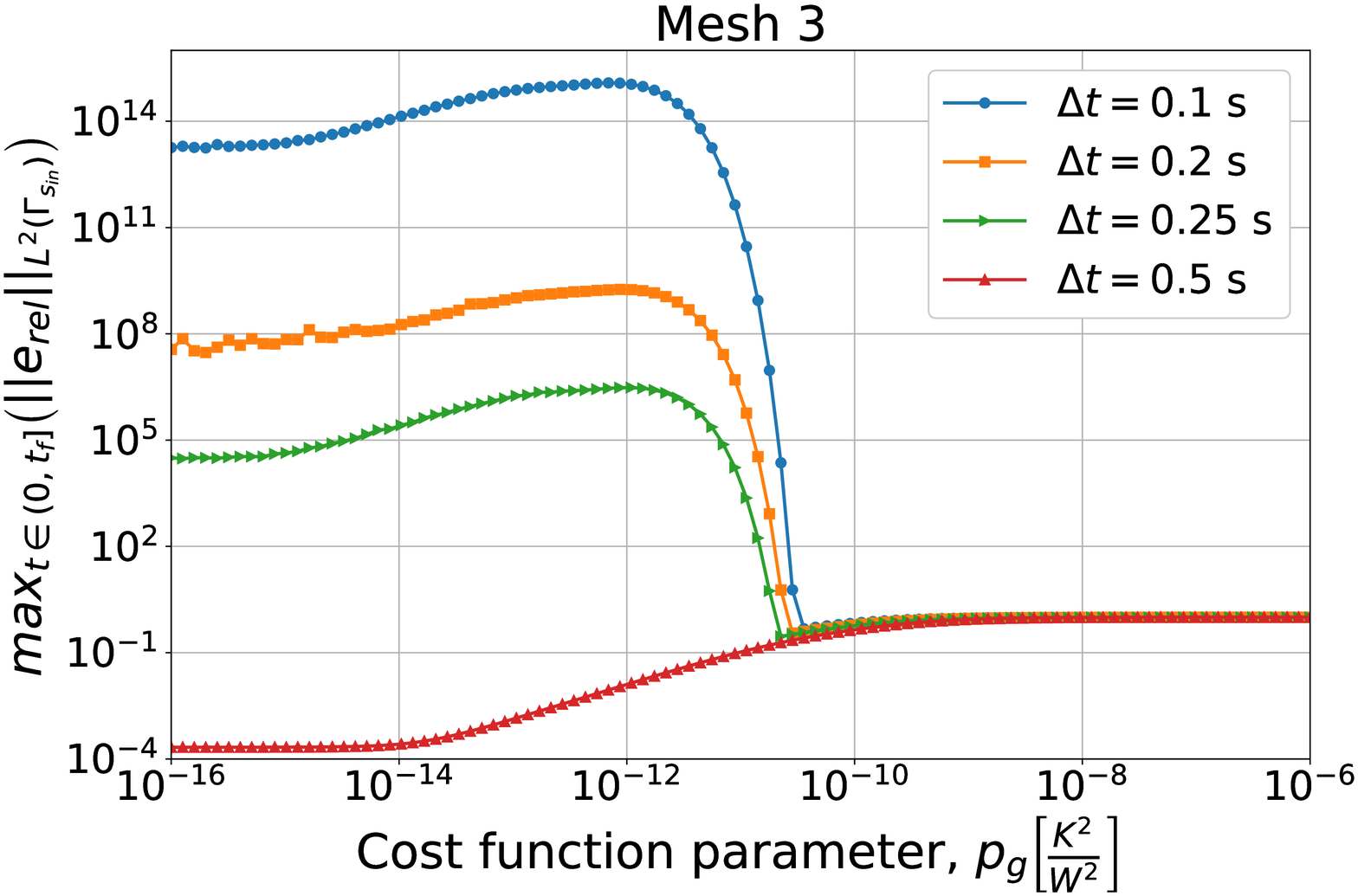}
            \caption{Max. relative error norm.}
        \end{subfigure}%
    \end{subfigure}%
    \caption{Mean (a) and maximum (b) values of the $L^2$-norm of the relative error, $e_{rel}$, in the interval $(0, t_f]$, for Benchmark 1 as the value of the cost function parameter, $p_g$, changes for Algorithm~\ref{alg:inverseSolver_linear_heatNorm} (piecewise linear time approximation of the heat flux).
    We show the results for Mesh 3 and different $\Delta t$.}
    \label{fig:unsteadyNumericalBenchmarkInverseLinear_linear_costFunction_differentDt}
\end{figure}

At first, we notice that this algorithm has a very different behavior with respect to the piecewise constant case.
In this case, the timestep size dramatically affects the results.
We can depict two different behaviors as $p_g$ changes for a chosen $\Delta t$.
In the first one (i.e. $\Delta t = 0.1~s$, $0.2~s$ and $0.25~s$), the inverse solver is very unstable and provides completely useless solutions for low values of $p_g$ (i.e. $p_g \lesssim 10^{-12} K^2/W^2$).
As $p_g$ increases, the quality of the approximation rapidly rises up until the error reaches a minimum.
Here, we have stable solutions and a good approximation of the heat flux.
For higher values of $p_g$, the error monotonically increases until it reaches a plateau at $100\%$.

On the other hand, we have a different behavior for $\Delta t = 0.5~s$.
In this case, the inverse solver performs similarly to the piecewise constant case, but the quality of the estimation is by almost two orders of magnitude better.
Then, we have stable and accurate solutions for low values of $p_g$.
For $p_g \gtrsim 10^{-12} K^2/W^2$ we have a monotonic degradation of the heat flux estimation until we reach the $100\%$ plateau.

It is interesting to notice that the second term in the functional $S_2^k$ can make the solver insensible to the discretization refinement.
In fact, after a certain value of $p_g$, the relative error norms for the different $\Delta t$ are almost coincident.

We can see a similar behavior in Figure~\ref{fig:unsteadyNumericalBenchmarkInverseLinear_linear_costFunction_differentMeshes} where we show the results obtained refining the mesh and keeping $\Delta t = 0.25~s$.
Also in this case, we notice the two previously described behaviors with the coarsest mesh being always stable and providing the best results for the lowest values of $p_g$.

\begin{figure}[!htb]
    \begin{subfigure}{\linewidth}
        \centering
        \begin{subfigure}[c]{.4\linewidth}
            \captionsetup{width=.85\linewidth}
            \includegraphics[width=.95\textwidth]{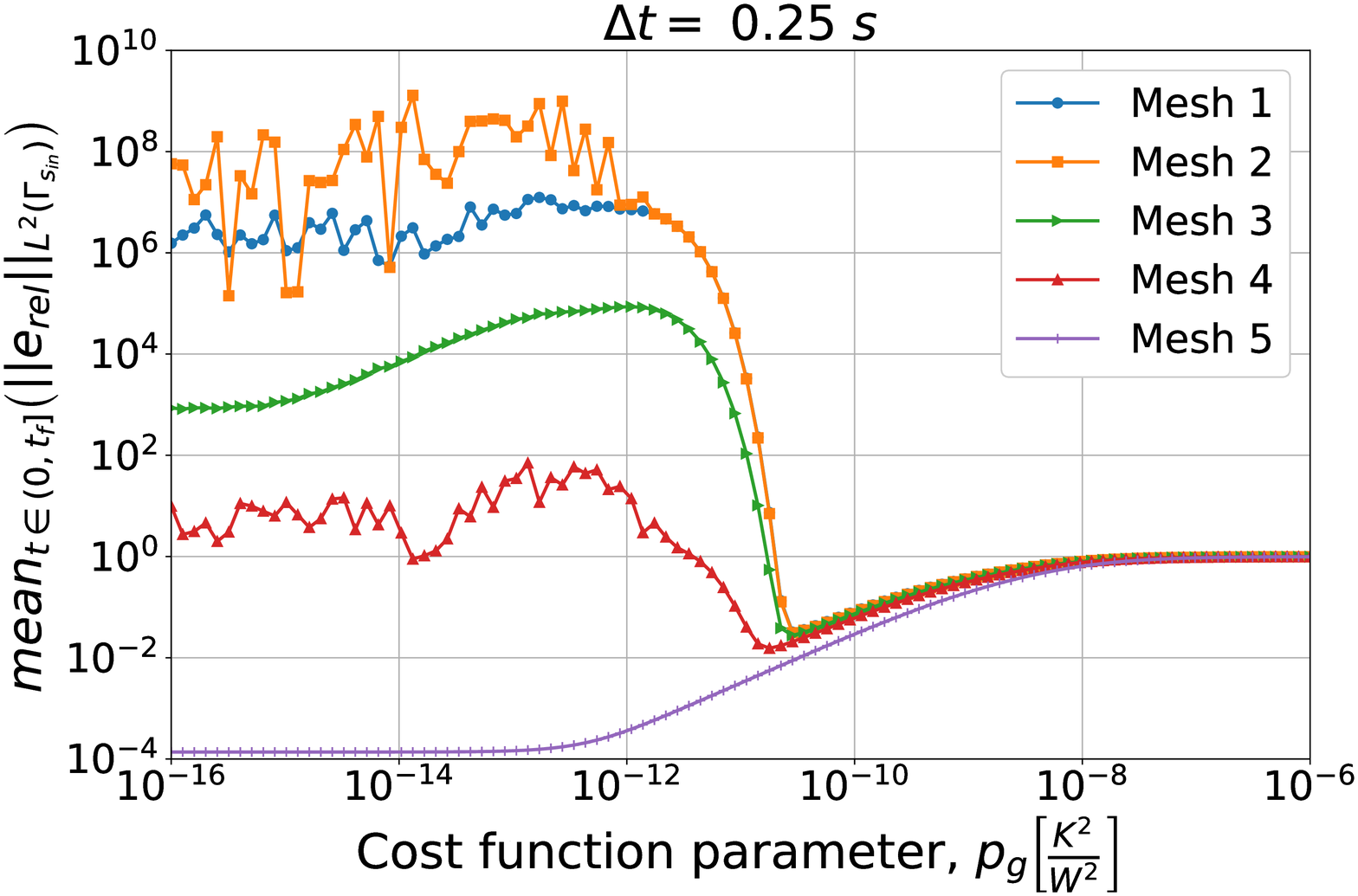}
            \caption{Mean relative error norm.}
        \end{subfigure}%
        \begin{subfigure}[c]{.4\linewidth}
            \captionsetup{width=.85\linewidth}
            \includegraphics[width=.95\textwidth]{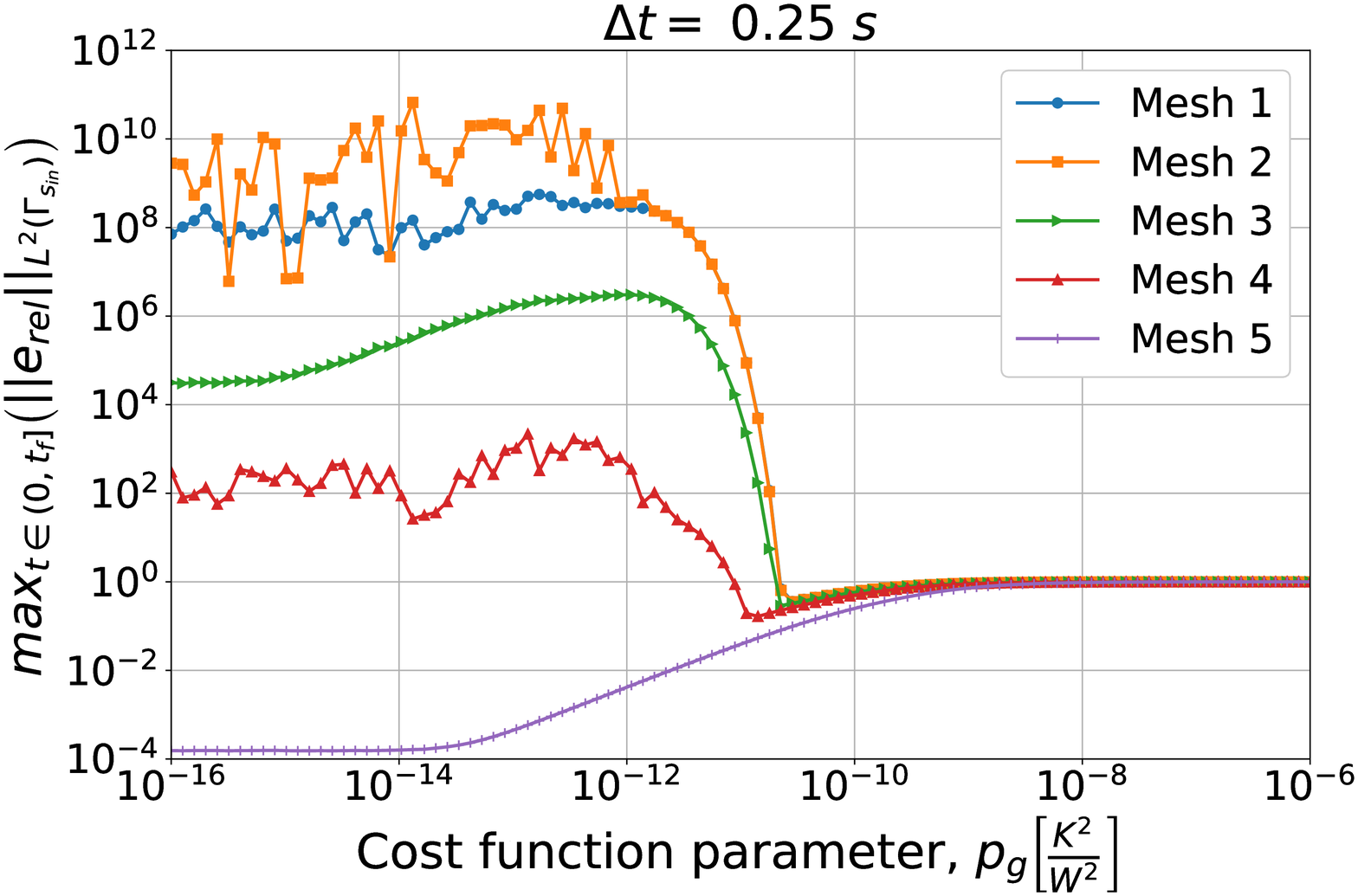}
            \caption{Max. relative error norm.}
        \end{subfigure}%
    \end{subfigure}%
    \caption{Mean (a) and maximum (b) values of the $L^2$-norm of the relative error, $e_{rel}$, in the interval $(0, t_f]$, for Benchmark 1 as the value of the cost function parameter, $p_g$, changes for Algorithm~\ref{alg:inverseSolver_linear_heatNorm} (piecewise linear time approximation of the heat flux).
    We show the results for $\Delta t = 0.25s$ and different meshes.}
    \label{fig:unsteadyNumericalBenchmarkInverseLinear_linear_costFunction_differentMeshes}
\end{figure}

These statements are remarked by the results shown in Figure~\ref{fig:unsteadyNumericalBenchmarkInverseLinear_linear_timeSpaceRefinement_costFunc5e-11}, where we show the results of the same test as in Figure~\ref{fig:unsteadyNumericalBenchmarkInverseLinear_linear_timeSpaceRefinement} but for $p_g = 5e-11 \frac{K^2}{W^2}$.
The obtained results confirm that for some values of $p_g$ we can obtain a stable solver with a moderate dependency on the discretization refinement.

\begin{figure}[!htb]
    \begin{subfigure}{\linewidth}
        \centering
        \begin{subfigure}[c]{.4\linewidth}
            \captionsetup{width=.85\linewidth}
            \includegraphics[width=.95\textwidth]{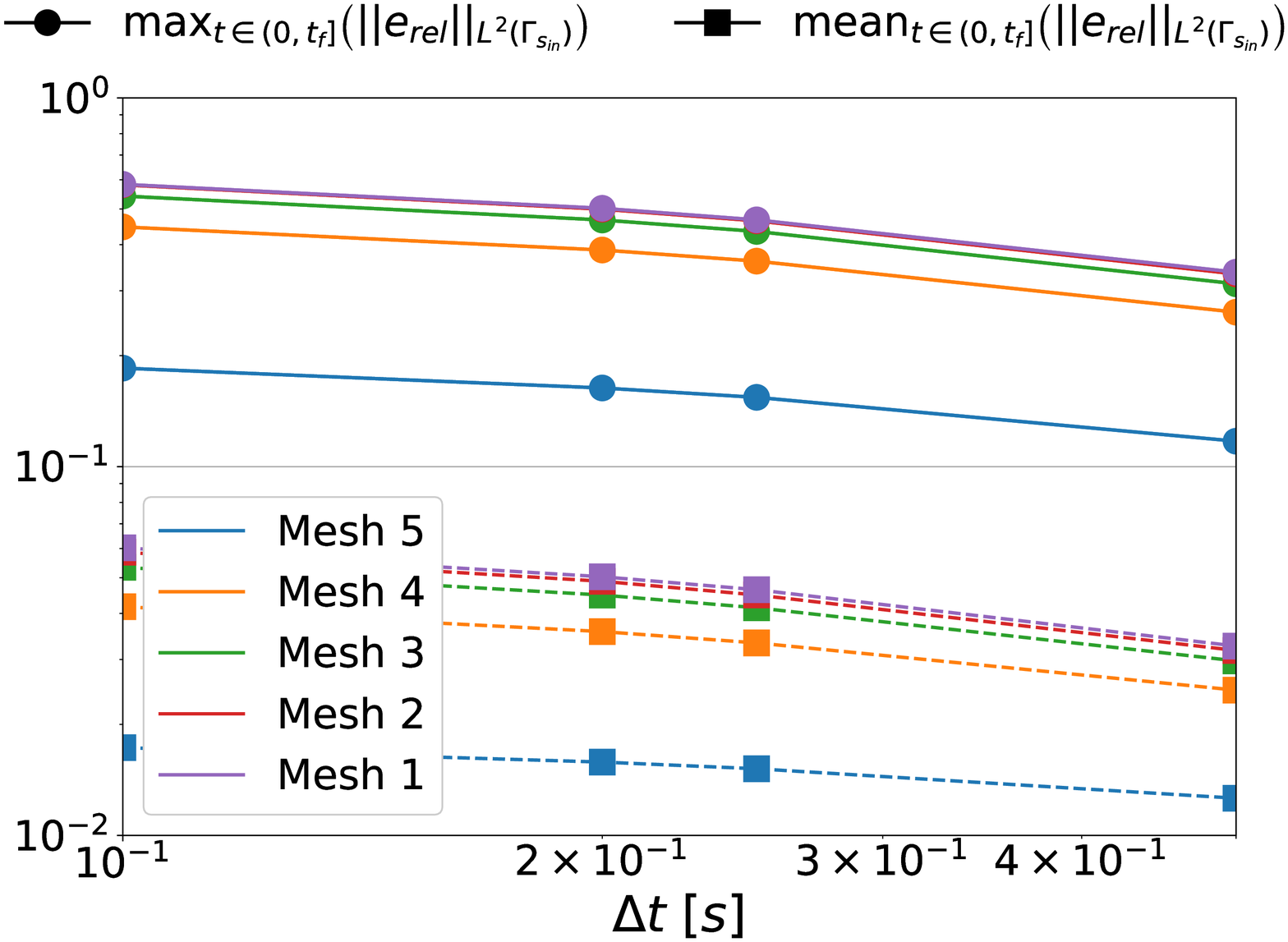}
            \caption{$L^2$-norm of the relative error as a function of $\Delta t$}
        \end{subfigure}%
        \begin{subfigure}[c]{.4\linewidth}
            \captionsetup{width=.85\linewidth}
            \includegraphics[width=.95\textwidth]{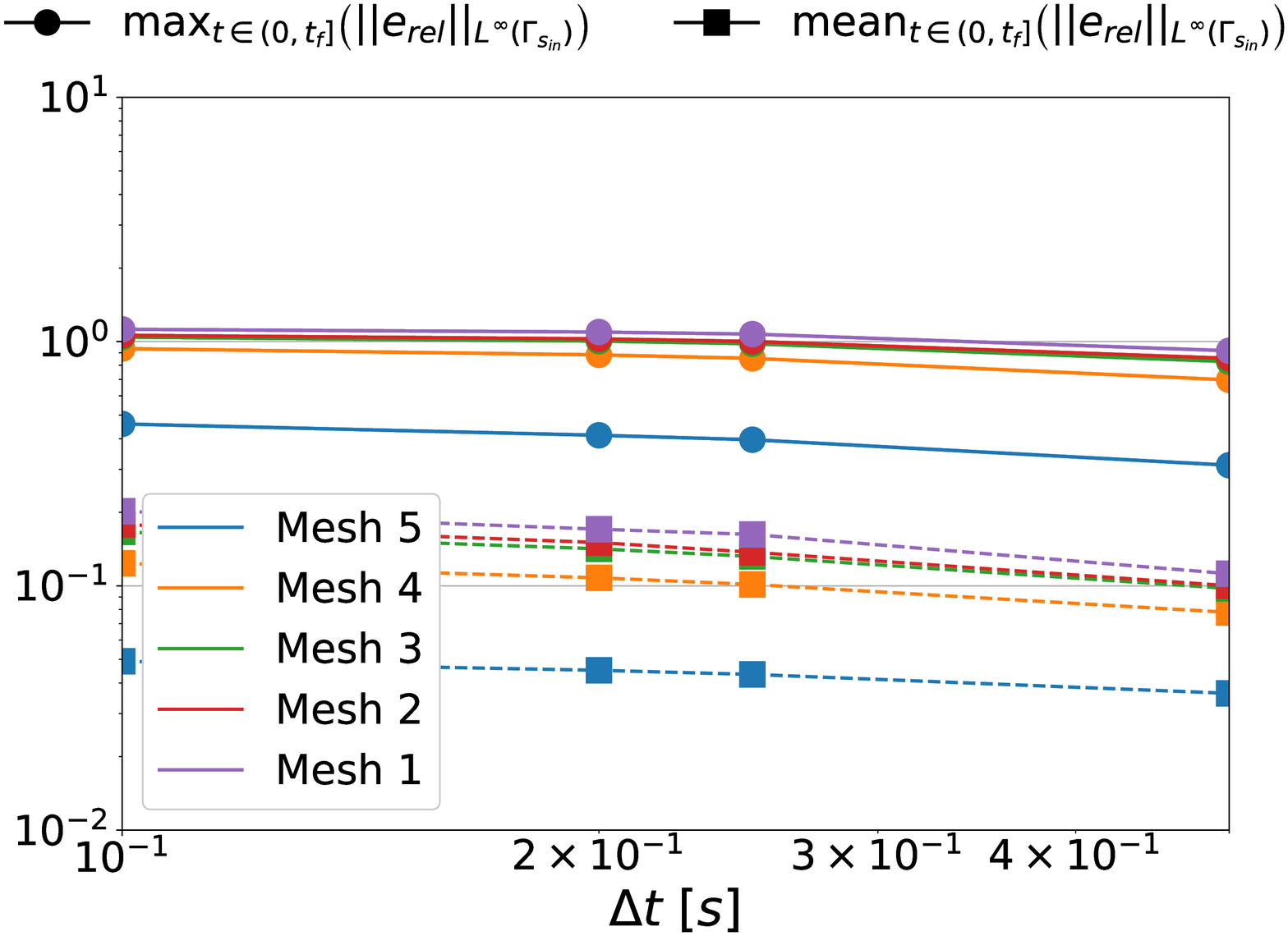}
            \caption{$L^\infty$-norm of the relative error as a function of $\Delta t$}
        \end{subfigure}%
    \end{subfigure}%
    \\
    \begin{subfigure}{\linewidth}
        \centering
        \begin{subfigure}[c]{.4\linewidth}
            \captionsetup{width=.85\linewidth}
            \includegraphics[width=.95\textwidth]{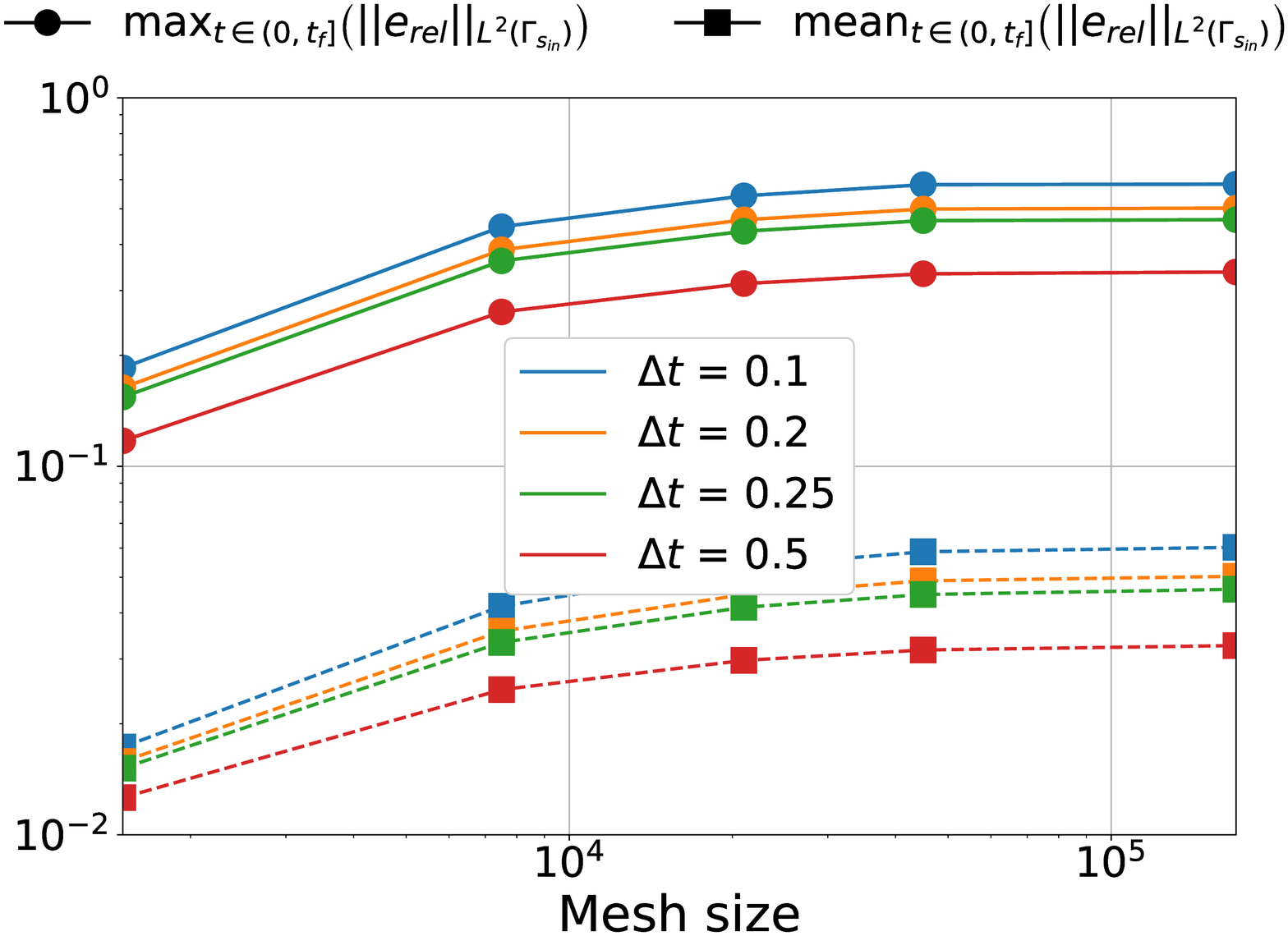}
            \caption{$L^2$-norm of the relative error as a function of the mesh size}
        \end{subfigure}%
        \begin{subfigure}[c]{.4\linewidth}
            \captionsetup{width=.85\linewidth}
            \includegraphics[width=.95\textwidth]{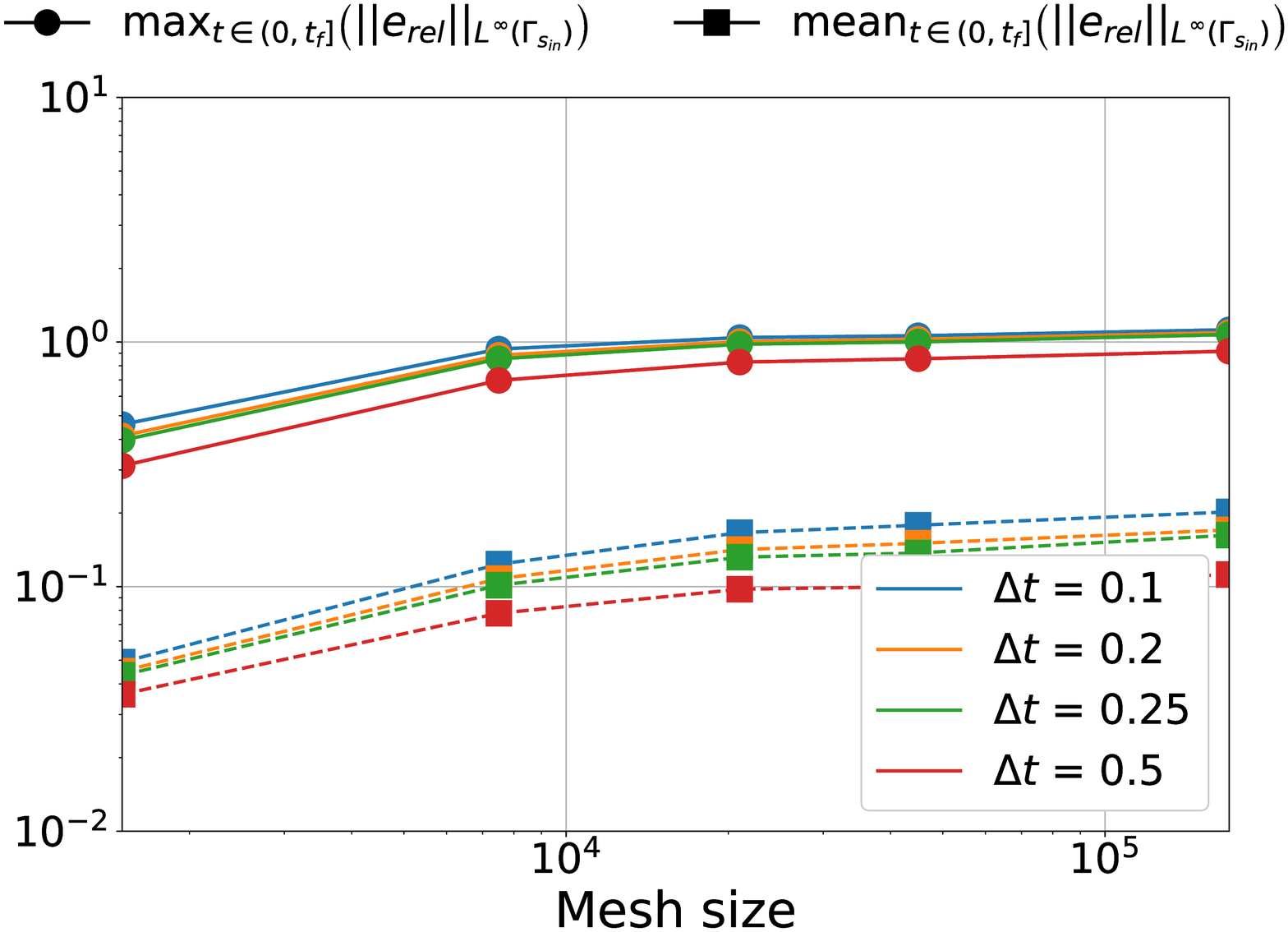}
            \caption{$L^\infty$-norm of the relative error as a function of the mesh size}
        \end{subfigure}%
    \end{subfigure}%
    \caption{Maximum (circles) and mean (squares) values of the $L^2$- and $L^\infty$-norm of the relative error, $e_{rel}$, in the interval $(0, t_f]$, for Benchmark 1 as the time and space discretization changes for Algorithm~\ref{alg:inverseSolver_linear_heatNorm} (piecewise linear time approximation of the heat flux and $p_g = 5e-11 \frac{K^2}{W^2}$).}
    \label{fig:unsteadyNumericalBenchmarkInverseLinear_linear_timeSpaceRefinement_costFunc5e-11}
\end{figure}

To conclude this analysis, we test the discretization and $p_g$ selection method of Algorithm~\ref{alg:meshSelection} in this benchmark case.
We use the virtual thermocouples measurements as input training dataset for the algorithm, $\hat{T}_{train}$.
In the test, the algorithm has to select a combination of mesh, timestep size and $p_g$ that provides stable and accurate solutions to this inverse problem.
Before presenting the results of Algorithm~\ref{alg:meshSelection}, we show in Figure~\ref{fig:unsteadyNumericalBenchmarkInverseLinear_linear_costFunction_measDiscrepancy} the mean value of the temperature discrepancy functional $S_1^k$, defined in (\ref{eq:unsteadyInverseFunctional1}), as function of $p_g$ for different meshes and $\Delta t$.

\begin{figure}[!htb]
    \begin{subfigure}{\linewidth}
        \centering
        \begin{subfigure}[c]{.4\linewidth}
            \captionsetup{width=.85\linewidth}
            \includegraphics[width=.95\textwidth]{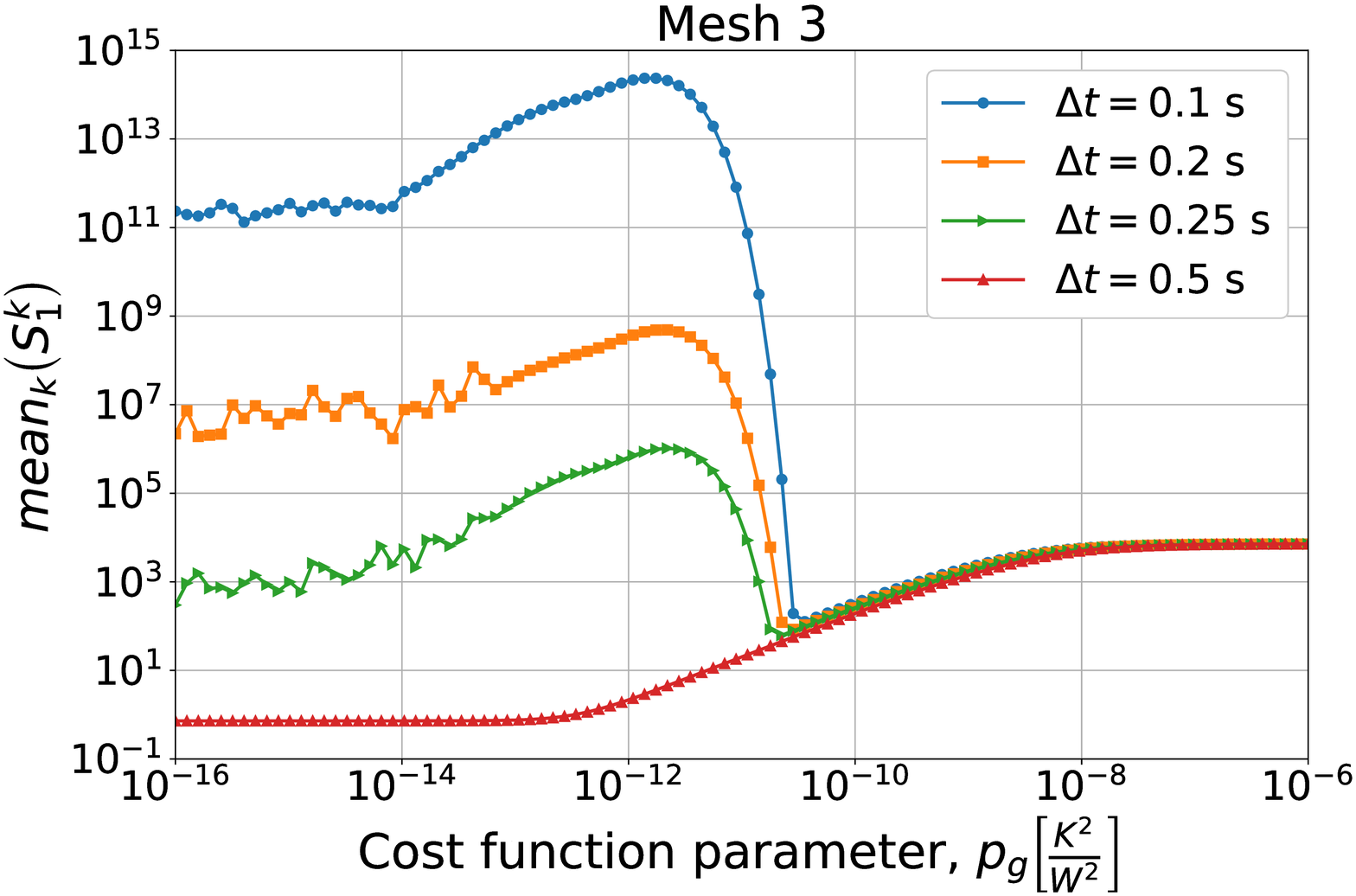}
            \caption{Mean of $S_1^k$ for Mesh 3.}
        \end{subfigure}%
        \begin{subfigure}[c]{.4\linewidth}
            \captionsetup{width=.85\linewidth}
            \includegraphics[width=.95\textwidth]{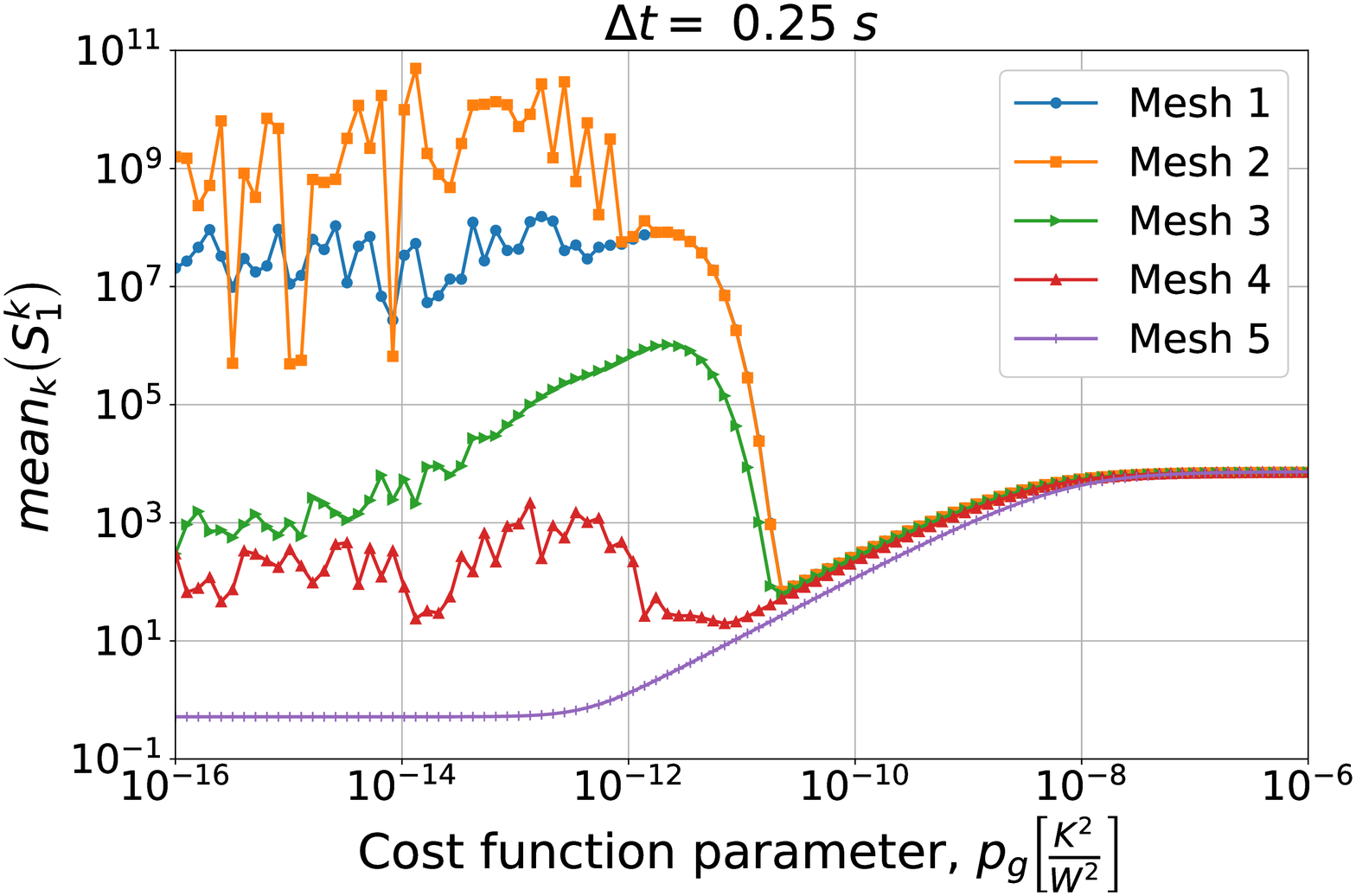}
            \caption{Mean of $S_1^k$ for $\Delta t = 0.25s$.}
        \end{subfigure}%
    \end{subfigure}%
    \caption{Mean values of $S_1^k$ for $1 \leq k \leq P_t$, for Benchmark 1 as the value of the cost function parameter, $p_g$, changes.
    The results are obtained using Algorithm~\ref{alg:inverseSolver_linear_heatNorm} (piecewise linear time approximation of the heat flux).
    We show the results for Mesh 3 and different $\Delta t$ in (a), and for $\Delta t = 0.25$ and different meshes in (b).}
    \label{fig:unsteadyNumericalBenchmarkInverseLinear_linear_costFunction_measDiscrepancy}
\end{figure}

These results show that $m_S$ in (\ref{eq:discrSelection_mesurementsDiscrepancy}) behaves similarly to the relative error (\ref{eq:unsteadyBenchmark_relativeError}) as function of $p_g$.
Notice that it presents the same two behaviors that we previously described for the relative error depending on the used discretization setup.
Moreover, the $m_S$ minima are close to those of the relative error.
For these reasons, in Section~\ref{sec:unsteadyDiscretizationSelectionAlgorithm}, we used this result in the creation of the selection criteria for the $p_g$ as well as for the mesh and the $\Delta t$.

That said, we are now ready to test Algorithm~\ref{alg:meshSelection}.
With respect to its implementation, in step~\ref{alg:meshSelection_minimization} of the algorithm, we use the Nelder-Mead method to find the $p_g$ that minimizes $S_1^k$\cite{Donald1975}.
To start the algorithm, we set $p_g^0 = 1e-7~\frac{K^2}{W^2}$.
Table~\ref{tab:unsteadyNumericalBenchmarkInverse_linear_meshSelection} summarizes the algorithm behavior.

\begin{table}[htb]
\centering
    \caption{Test of Algorithm~\ref{alg:meshSelection} for Benchmark 1.}
    \label{tab:unsteadyNumericalBenchmarkInverse_linear_meshSelection}
    \begin{tabular}{ |l|c|c|c|c|}
        \hline
        \textbf{Iteration}    &   \textbf{Mesh}    &   $\Delta t~[s]$    &   $p_g~\left[ \frac{K^2}{W^2} \right]$    &   $mean_k \left( S_1^k \right)~\left[ K^2 \right]$\\
        \hline
        0   &   5   &   $0.5$ & $1e-7$    &    $6.9e3$\\   
        1   &   5   &   $0.5$ & $3.2e-21$    &    $3.9e-1$\\   
        \hline
    \end{tabular}
\end{table}

From the results in the table, we appreciate that the algorithm chooses the coarsest discretization since the first iteration.
Then, it looks for the $p_g$ that minimizes $S_1^k$ for this discretization and, not finding  a better discretization setup for this value of $p_g$, exits the process.
Comparing the obtained results to the relative error plots of Figures~\ref{fig:unsteadyNumericalBenchmarkInverseLinear_linear_costFunction_differentDt} and \ref{fig:unsteadyNumericalBenchmarkInverseLinear_linear_costFunction_differentMeshes}, we confirm that the algorithm is selecting the best configuration in between all the available.

\subsection{Benchmark 2}\label{sec:unsteadyBenchmark_nonlinear}

In designing this benchmark case, we use the same geometrical and physical parameters as in Benchmark 1, but we choose a non linear in time true heat flux, $g_{tr}$, as in Table~\ref{tab:unsteadyNumericalBenchmarkInverse_nonlinear_parameters}.

\begin{table}[htb]
\centering
    \caption{Parameters used for the unsteady Benchmark 2.}
    \label{tab:unsteadyNumericalBenchmarkInverse_nonlinear_parameters}
    \begin{tabular}{ ll }
        \hline
        \textbf{Parameter}    &   \textbf{Value}\\
        \hline
        $g_1(\mathbf{x})$                           & $bz^2 + c$\\
        $g_2(\mathbf{x})$                           & $\frac{10 c}{1 + (x - 1)^2 + z^2}$\\
        Heat flux, $g_{tr}(\mathbf{x},t)$                & $- k_s \left[ g_1  + \frac{g_1}{2} \sin{\left(2 \pi f_{max} \frac{t^2}{t_f}\right)} + g_2 e^{-0.1t} \right]$ W/m\textsuperscript{2} \\
        Maximum frequency, $f_{max}$    & $0.1$ Hz\\
        \hline
    \end{tabular}
\end{table}

Also for this benchmark, we test both the piecewise constant and linear approximation algorithms.
In particular, we investigate the regularization properties of the discretization coarsening, the effects of the $p_g$ parameter and the robustness of the algorithms to noise in the measurements.

\subsubsection{Effect of Time and Space Discretization Refinement}

The first test we do is related to the space and time discretization refinement.
To test the effect of changing the space and time discretization sizes, we reconstruct the heat flux $g_{tr}(\mathbf{x},t)$ for all the meshes in Table~\ref{tab:unsteadyNumericalBenchmark_meshes} and $\Delta t = 0.1$, $0.2$, $0.25$ and $0.5~s$.
In these tests, we use the cost functional (\ref{eq:unsteadyInverseFunctional1}) (i.e. $p_g = 0~\frac{K^2}{W^2}$) and do not add noise to the measurements.

First, we test Algorithm~\ref{alg:inverseSolver_constant}.
Figure~\ref{fig:unsteadyNumericalBenchmarkInverseNonLinear_constant_timeSpaceRefinement} illustrates the mean and maximum values in $(0, t_f]$ of the $L^2$- and $L^\infty$-norm of the relative error (\ref{eq:unsteadyBenchmark_relativeError}).
The results show a entirely similar behavior to that of the previous benchmark as described in Section~\ref{sec:benchmark_spaceTimeRef1}.

\begin{figure}[!htb]
    \begin{subfigure}{\linewidth}
        \centering
        \begin{subfigure}[c]{.4\linewidth}
            \captionsetup{width=.85\linewidth}
            \includegraphics[width=.95\textwidth]{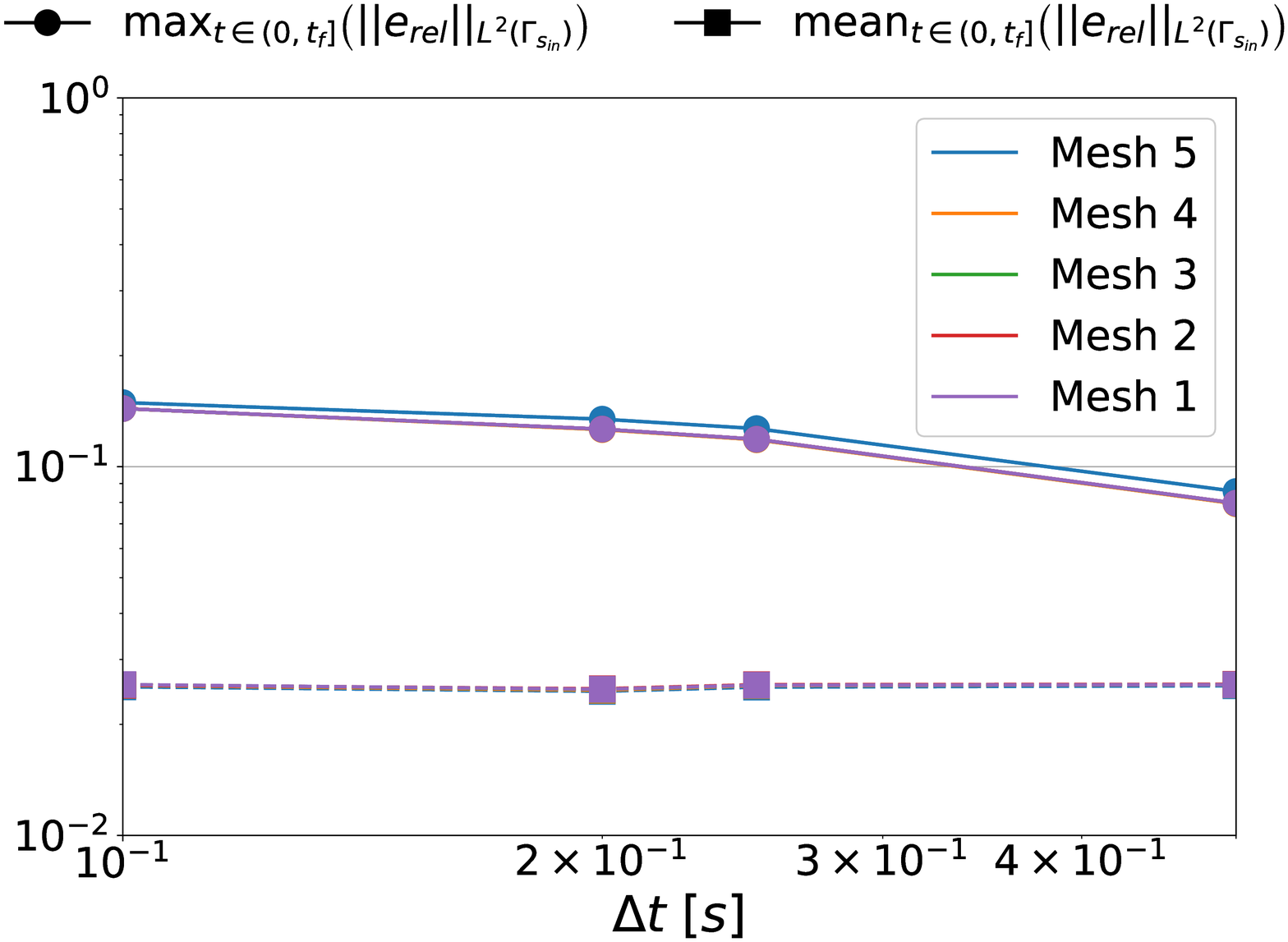}
            \caption{$L^2$-norm of the relative error as a function of $\Delta t$}
        \end{subfigure}%
        \begin{subfigure}[c]{.4\linewidth}
            \captionsetup{width=.85\linewidth}
            \includegraphics[width=.95\textwidth]{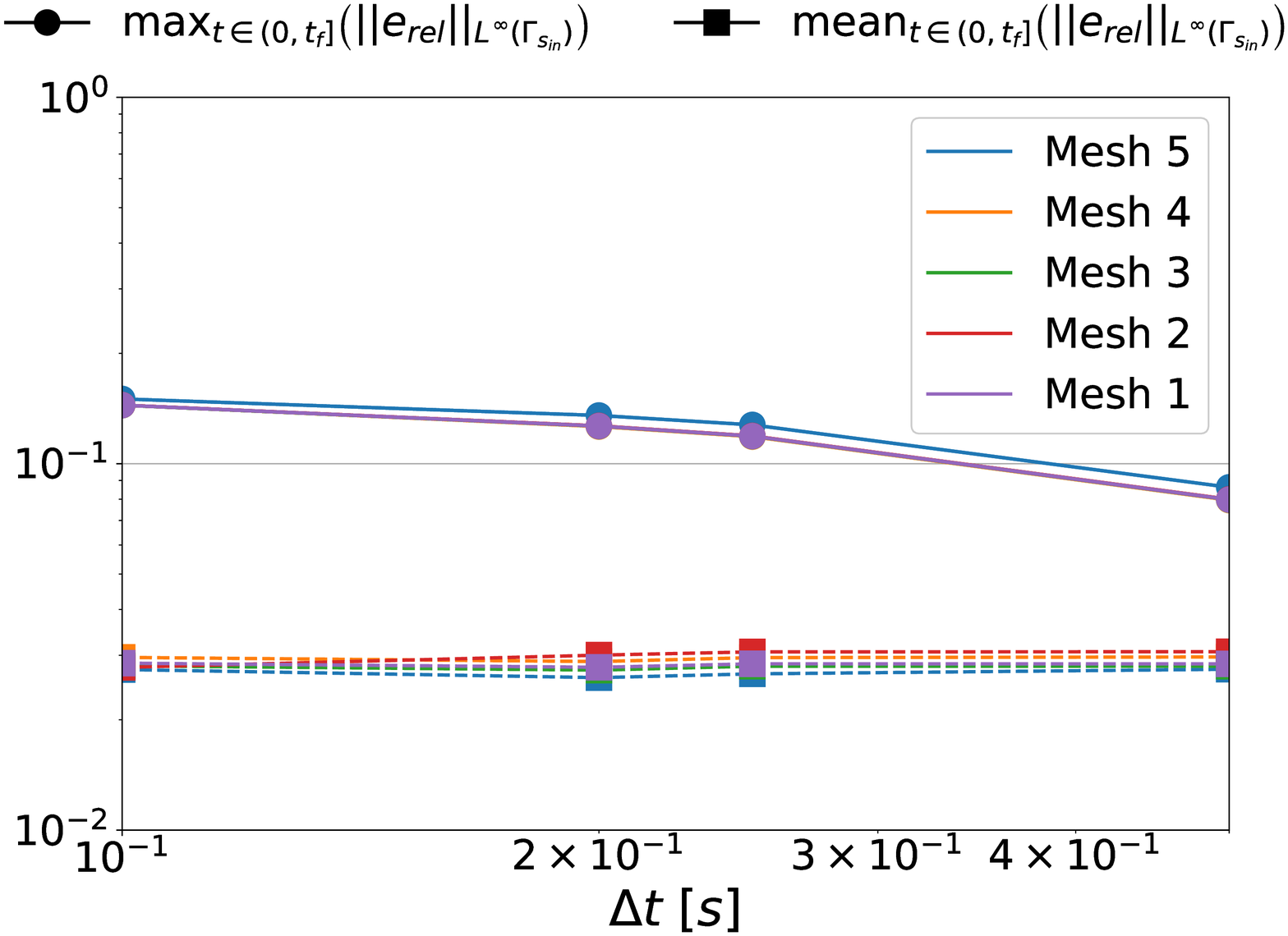}
            \caption{$L^\infty$-norm of the relative error as a function of $\Delta t$}
        \end{subfigure}%
    \end{subfigure}%
    \\
    \begin{subfigure}{\linewidth}
        \centering
        \begin{subfigure}[c]{.4\linewidth}
            \captionsetup{width=.85\linewidth}
            \includegraphics[width=.95\textwidth]{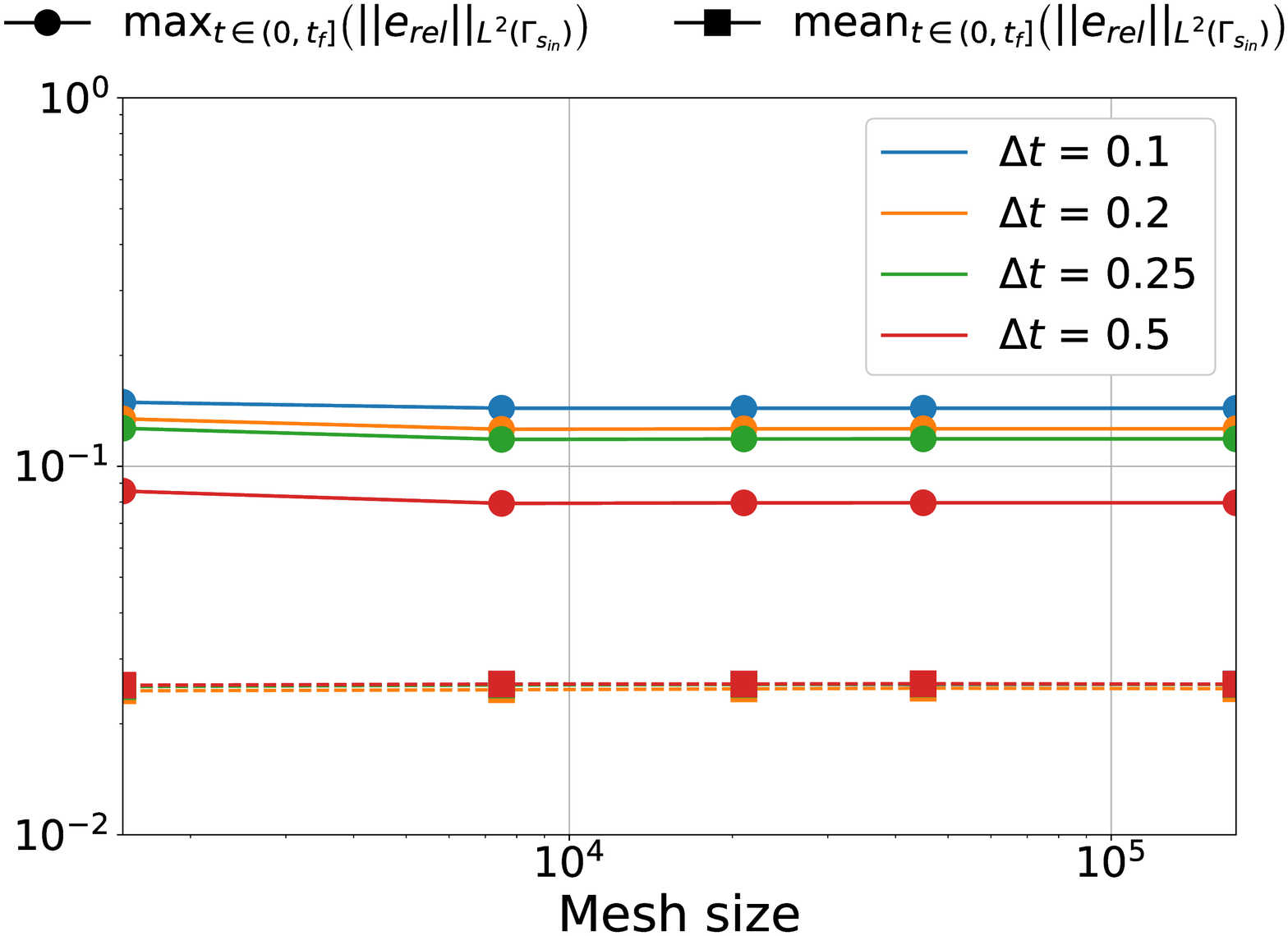}
            \caption{$L^2$-norm of the relative error as a function of the mesh size}
        \end{subfigure}%
        \begin{subfigure}[c]{.4\linewidth}
            \captionsetup{width=.85\linewidth}
            \includegraphics[width=.95\textwidth]{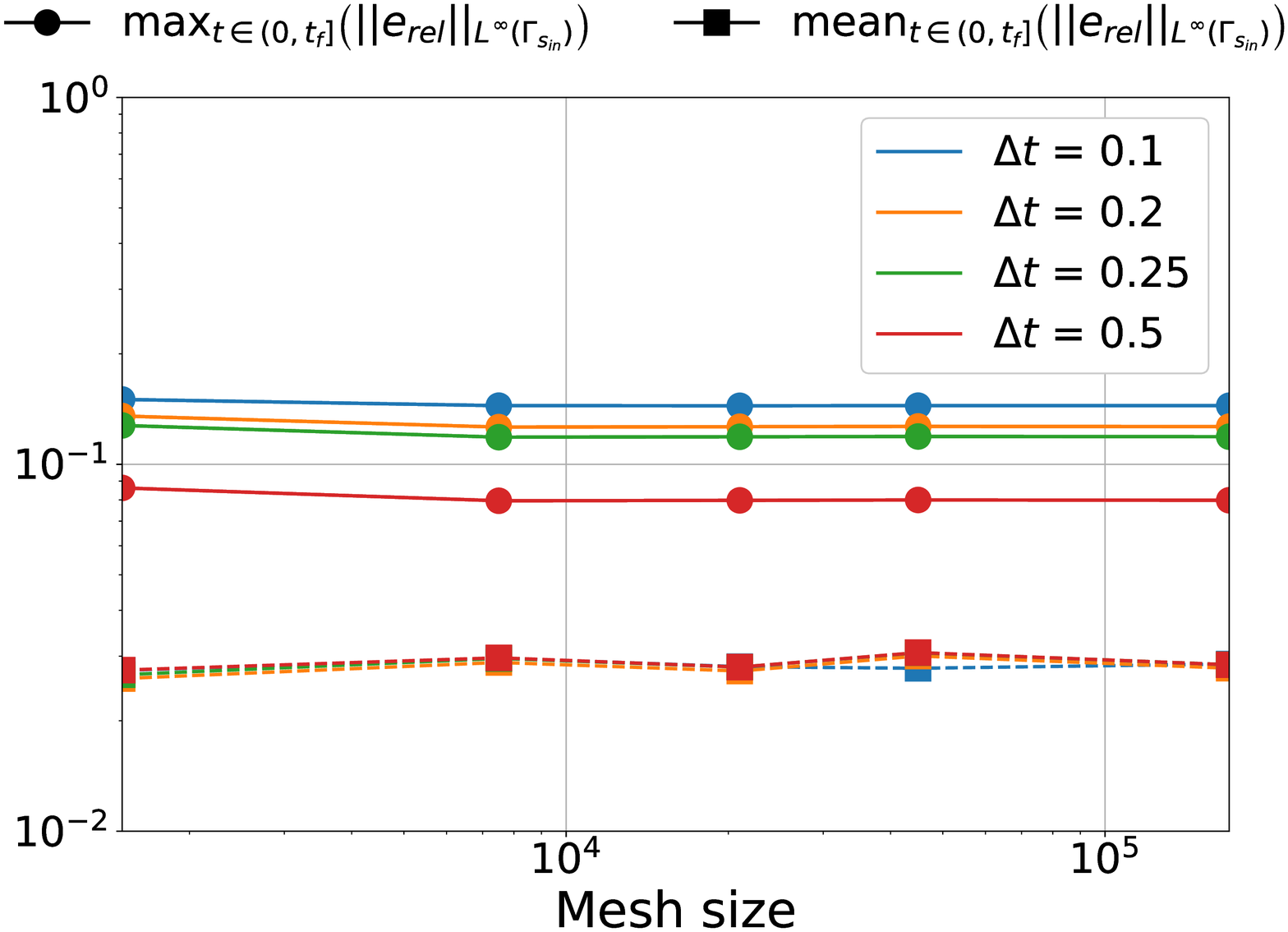}
            \caption{$L^\infty$-norm of the relative error as a function of the mesh size}
        \end{subfigure}%
    \end{subfigure}%
    \caption{Maximum (circles) and mean (squares) values of the $L^2$- and $L^\infty$-norm of the relative error, $e_{rel}$, in the interval $(0, t_f]$, for Benchmark 2 as the time and space discretization changes for Algorithm~\ref{alg:inverseSolver_constant} (piecewise constant time approximation of the heat flux and $p_g = 0 \frac{K^2}{W^2}$).}
    \label{fig:unsteadyNumericalBenchmarkInverseNonLinear_constant_timeSpaceRefinement}
\end{figure}

\begin{figure}[!htb]
    \begin{subfigure}{\linewidth}
        \centering
        \begin{subfigure}[c]{.4\linewidth}
            \captionsetup{width=.85\linewidth}
            \includegraphics[width=.95\textwidth]{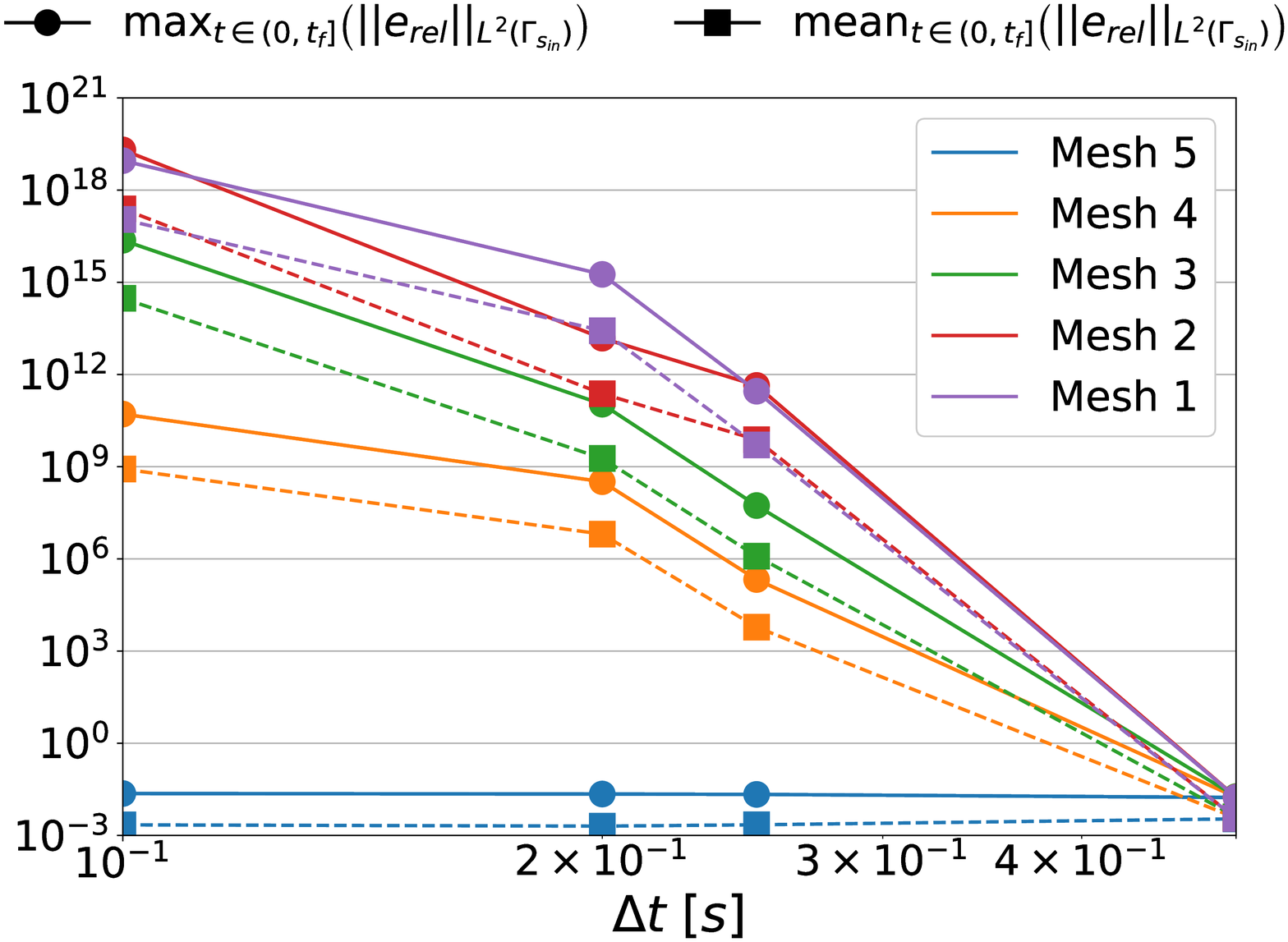}
            \caption{$L^2$-norm of the relative error as a function of $\Delta t$}
        \end{subfigure}%
        \begin{subfigure}[c]{.4\linewidth}
            \captionsetup{width=.85\linewidth}
            \includegraphics[width=.95\textwidth]{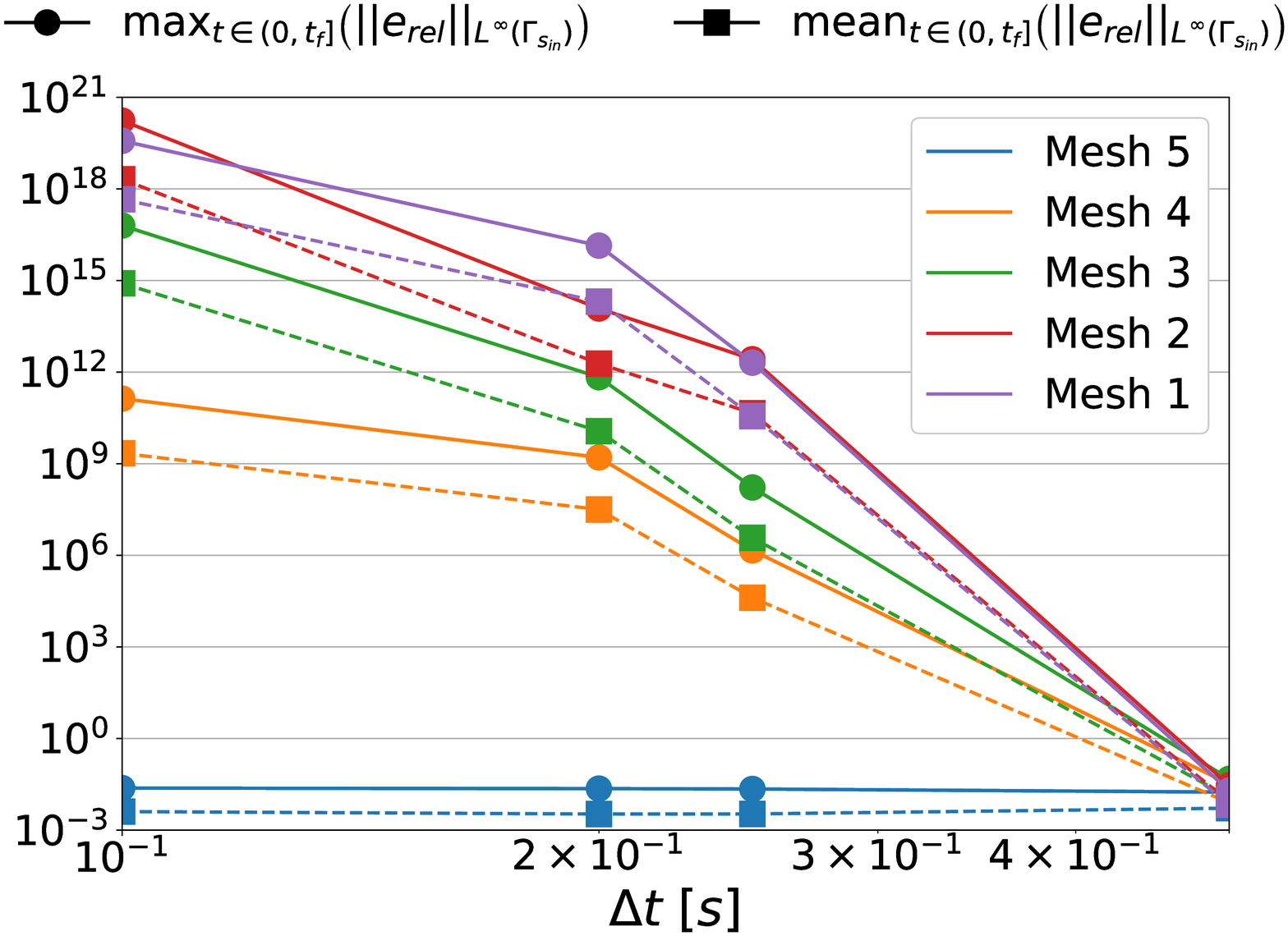}
            \caption{$L^\infty$-norm of the relative error as a function of $\Delta t$}
        \end{subfigure}%
    \end{subfigure}%
    \\
    \begin{subfigure}{\linewidth}
        \centering
        \begin{subfigure}[c]{.4\linewidth}
            \captionsetup{width=.85\linewidth}
            \includegraphics[width=.95\textwidth]{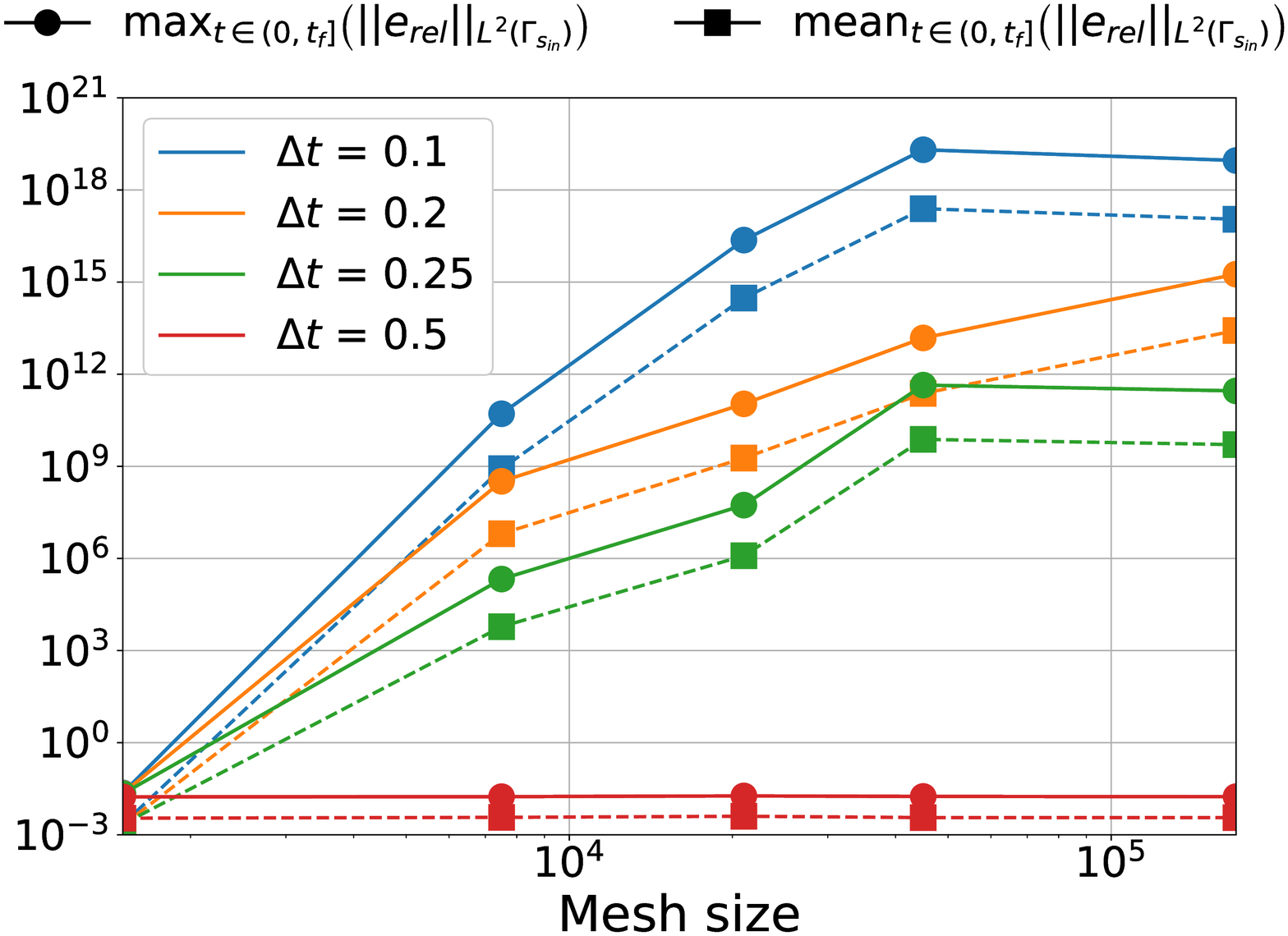}
            \caption{$L^2$-norm of the relative error as a function of the mesh size}
        \end{subfigure}%
        \begin{subfigure}[c]{.4\linewidth}
            \captionsetup{width=.85\linewidth}
            \includegraphics[width=.95\textwidth]{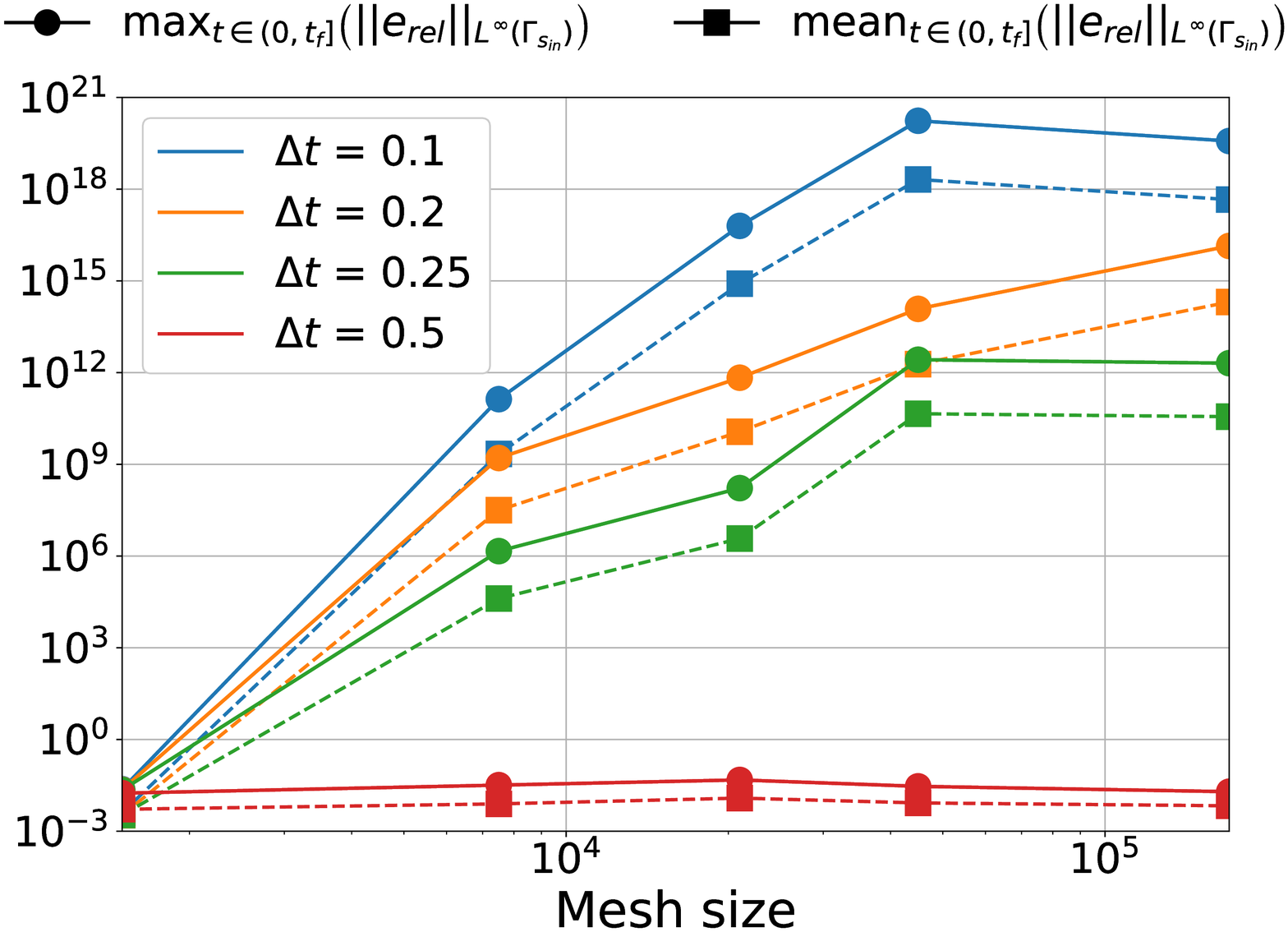}
            \caption{$L^\infty$-norm of the relative error as a function of the mesh size}
        \end{subfigure}%
    \end{subfigure}%
    \caption{Maximum (circles) and mean (squares) values of the $L^2$- and $L^\infty$-norm of the relative error, $e_{rel}$, in the interval $(0, t_f]$, for Benchmark 2 as the time and space discretization changes for Algorithm~\ref{alg:inverseSolver_linear} (piecewise linear time approximation of the heat flux and $p_g = 0 \frac{K^2}{W^2}$).}
    \label{fig:unsteadyNumericalBenchmarkInverseNonLinear_linear_timeSpaceRefinement}
\end{figure}

\subsubsection{Effect of Cost Functional Parameter, $p_g$}

Also for this benchmark case, we test the effects that the parameter $p_g$ in (\ref{eq:sequentialUnsteadyInverseProblem_heatNorm_functional}) has on the inverse solvers performances.
We do it by performing the same tests of Section~\ref{sec:unsteadyBenchmarkLinear_pgEffects} but for the present test case.
In particular, we solve this inverse problem using all the meshes in Table~\ref{tab:unsteadyNumericalBenchmark_meshes} and $\Delta t= 0.1, 0.2, 0.25$ and $0.5~s$, for $1e-16 \leq p_g \leq 1e-6 \frac{K^2}{W^2}$. 
We use both the piecewise constant and linear approximation of the heat flux in Algorithm~\ref{alg:inverseSolver_constant_heatNorm} and \ref{alg:inverseSolver_linear_heatNorm}, respectively.
Notice that in these tests, we use the full order algorithms.

First, we present in Figures~\ref{fig:unsteadyNumericalBenchmarkInverseNonLinear_constant_costFunction} and \ref{fig:unsteadyNumericalBenchmarkInverseNonLinear_constant_costFunction_meshes} the results obtained using the piecewise constant approximation algorithm.
These plots show the mean and maximum values of the $L^2$-norm of the relative error, $e_{rel}$, in the interval $(0, t_f]$ as functions of $p_g$ changing the mesh and the $\Delta t$, respectively.

\begin{figure}[!htb]
    \begin{subfigure}{\linewidth}
        \centering
        \begin{subfigure}[c]{.4\linewidth}
            \captionsetup{width=.85\linewidth}
            \includegraphics[width=.95\textwidth]{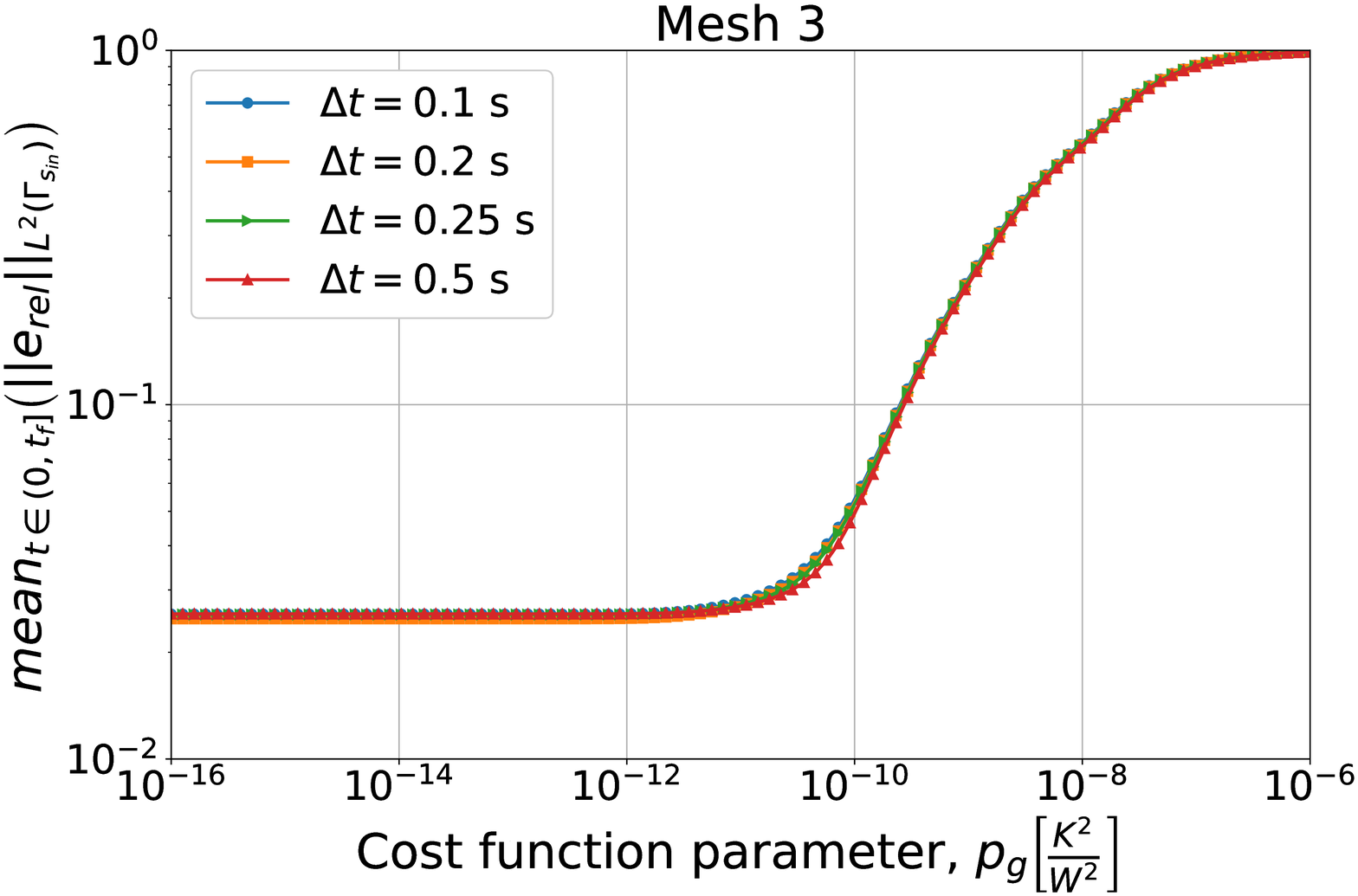}
            \caption{Mean relative error norm.}
        \end{subfigure}%
        \begin{subfigure}[c]{.4\linewidth}
            \captionsetup{width=.85\linewidth}
            \includegraphics[width=.95\textwidth]{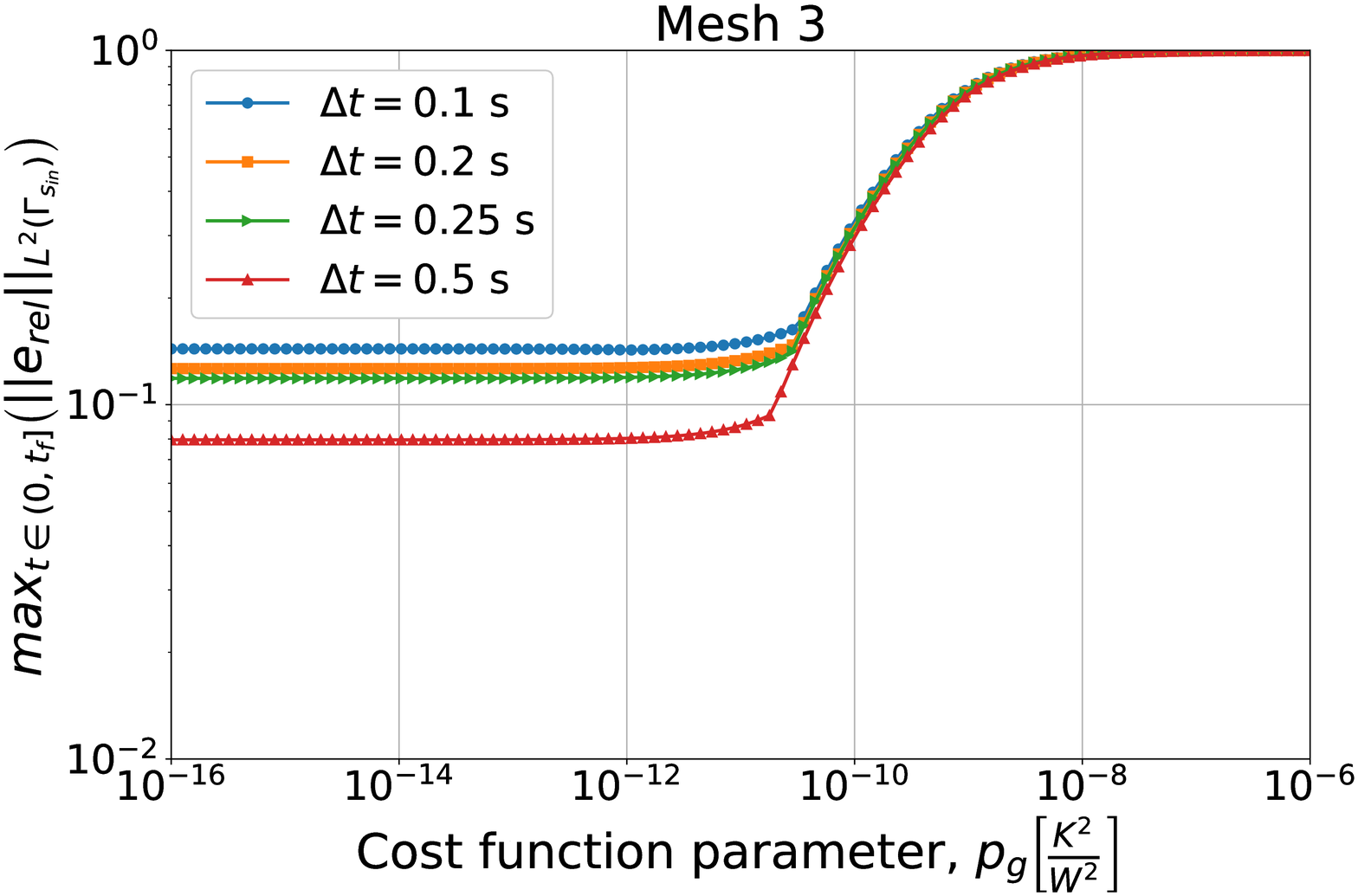}
            \caption{Max. relative error norm.}
        \end{subfigure}%
    \end{subfigure}%
    \caption{Mean (a) and maximum (b) values of the $L^2$-norm of the relative error, $e_{rel}$, in the interval $(0, t_f]$, for Benchmark 2 as the value of the cost function parameter, $p_g$, changes for Algorithm~\ref{alg:inverseSolver_constant_heatNorm} (piecewise constant time approximation of the heat flux).
    We show the results for Mesh 3 and different $\Delta t$.}
    \label{fig:unsteadyNumericalBenchmarkInverseNonLinear_constant_costFunction}
\end{figure}

\begin{figure}[!htb]
    \begin{subfigure}{\linewidth}
        \centering
        \begin{subfigure}[c]{.4\linewidth}
            \captionsetup{width=.85\linewidth}
            \includegraphics[width=.95\textwidth]{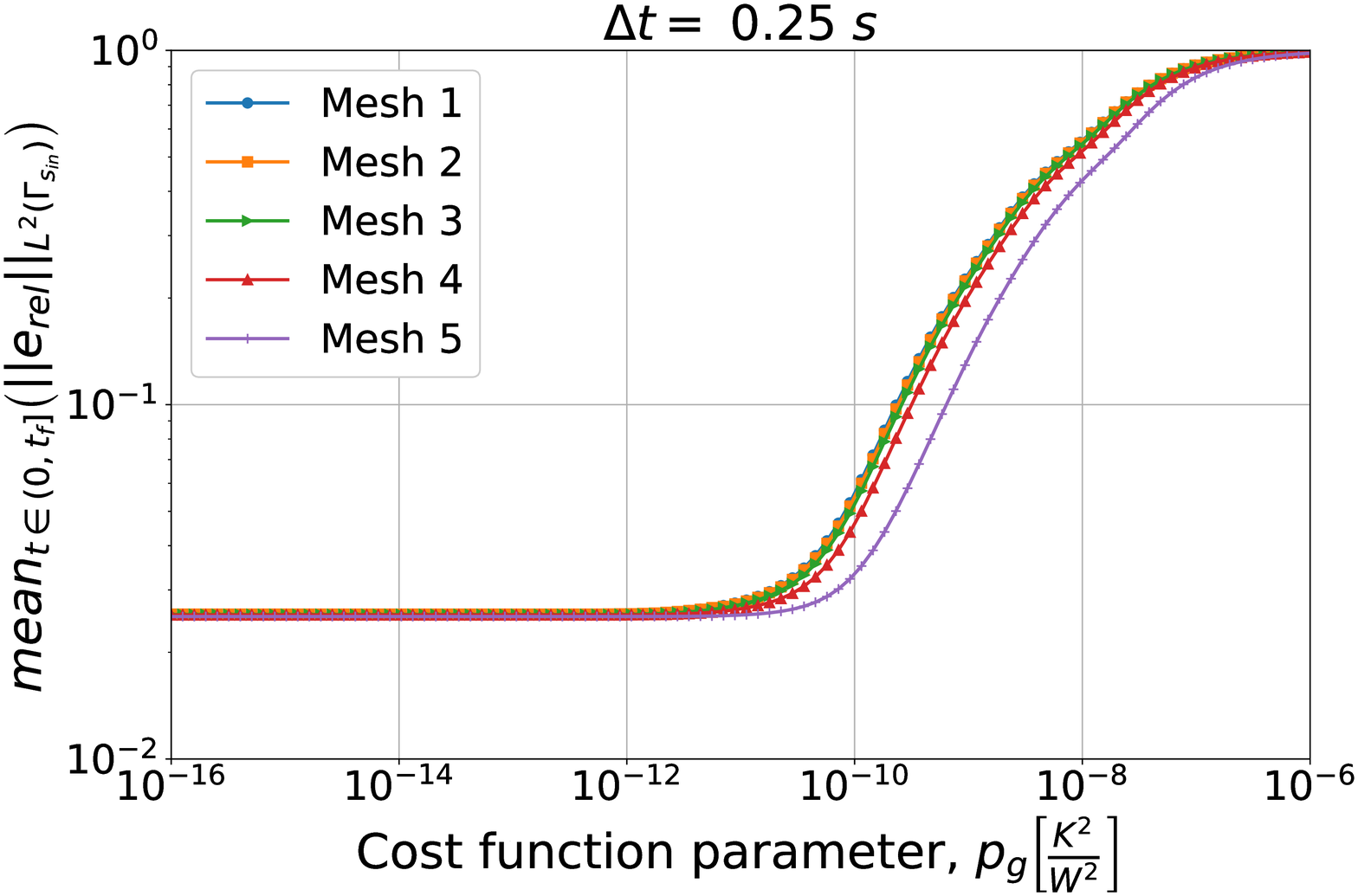}
            \caption{Mean relative error norm.}
        \end{subfigure}%
        \begin{subfigure}[c]{.4\linewidth}
            \captionsetup{width=.85\linewidth}
            \includegraphics[width=.95\textwidth]{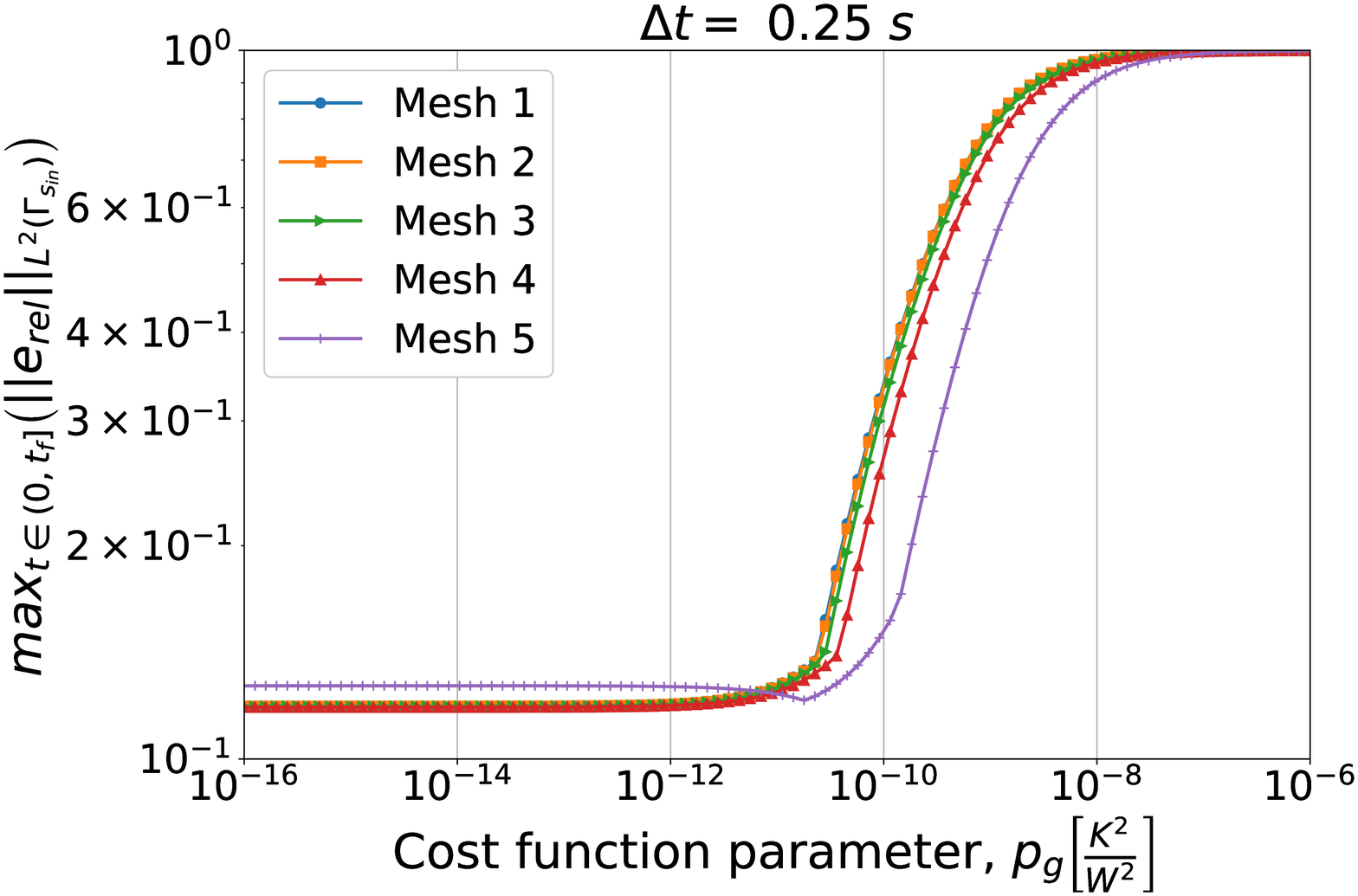}
            \caption{Max. relative error norm.}
        \end{subfigure}%
    \end{subfigure}%
    \caption{Mean (a) and maximum (b) values of the $L^2$-norm of the relative error, $e_{rel}$, in the interval $(0, t_f]$, for Benchmark 2 as the value of the cost function parameter, $p_g$, changes for Algorithm~\ref{alg:inverseSolver_constant_heatNorm} (piecewise constant time approximation of the heat flux).
    We show the results for $\Delta t = 0.25s$ and different meshes.}
    \label{fig:unsteadyNumericalBenchmarkInverseNonLinear_constant_costFunction_meshes}
\end{figure}

From the presented results, we notice that Algorithm~\ref{alg:inverseSolver_constant_heatNorm} has the same behavior shown in the previous benchmark case.
In particular, this inverse solver confirms to be badly affected by $p_g > 0 \frac{K^2}{W^2}$.
The effect of $p_g$ on its performance is very non linear with a first region of no effects for $p_g \lesssim 1e-11 \frac{K^2}{W^2}$ followed by a steep degradation and a plateau at $100\%$ relative error for $p_g \gtrsim 1e-7 \frac{K^2}{W^2}$.
Moreover, the different meshes and $\Delta t$ have the same behavior and similar values of the relative error.
These results confirm once more the almost insensibility of this inverse solver to the discretization refinement.

Now, we consider the piecewise linear approximation of Algorithm~\ref{alg:inverseSolver_linear_heatNorm}.
For this inverse solver, Figures~\ref{fig:unsteadyNumericalBenchmarkInverseNonLinear_linear_costFunction} and \ref{fig:unsteadyNumericalBenchmarkInverseNonLinear_linear_costFunction_meshes} show the mean and maximum values of the $L^2$-norm of the relative error, $e_{rel}$, in the interval $(0, t_f]$ as functions of $p_g$ changing the mesh and the $\Delta t$, respectively.

\begin{figure}[!htb]
    \begin{subfigure}{\linewidth}
        \centering
        \begin{subfigure}[c]{.4\linewidth}
            \captionsetup{width=.85\linewidth}
            \includegraphics[width=.95\textwidth]{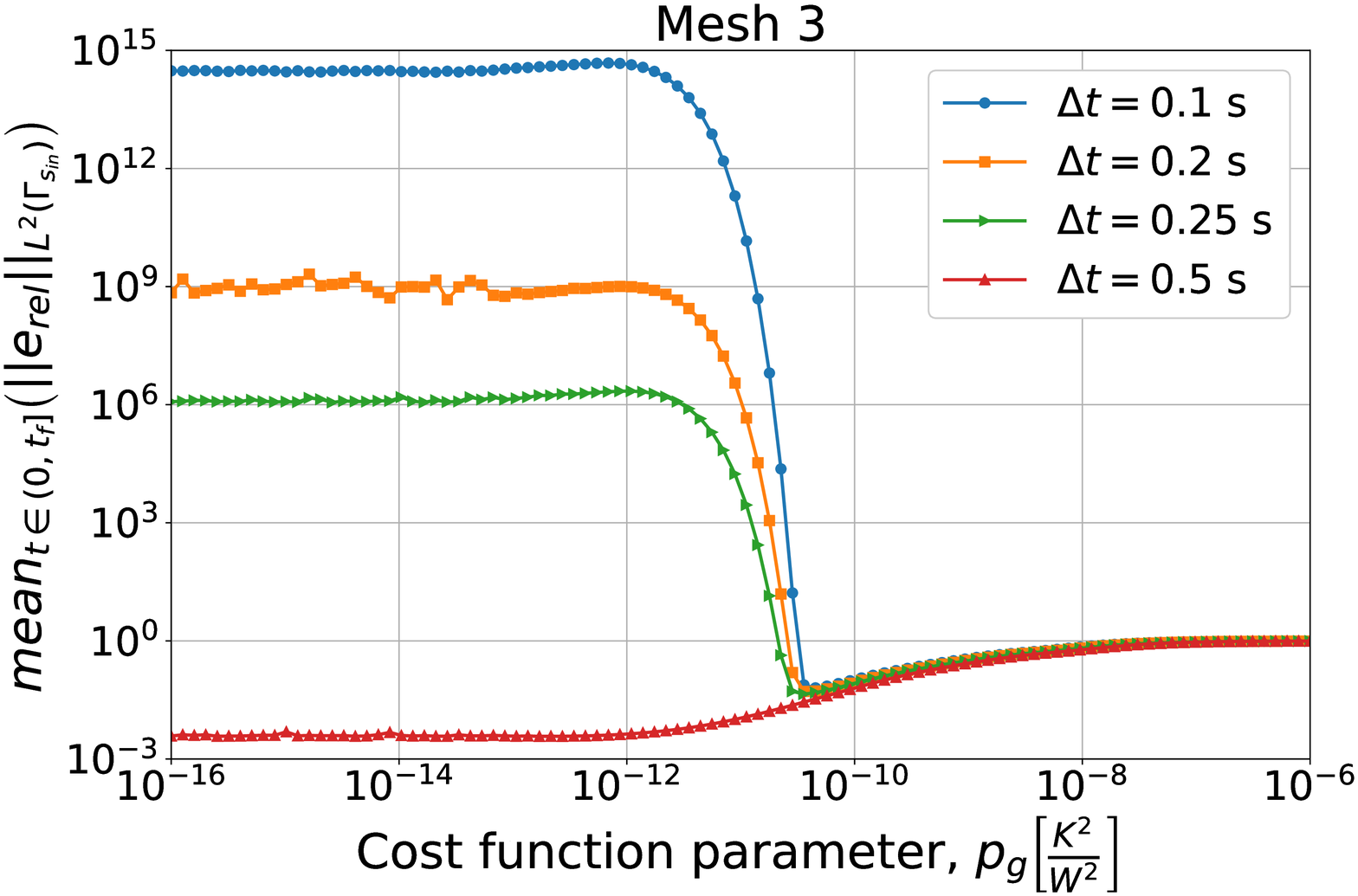}
            \caption{Mean relative error norm.}
        \end{subfigure}%
        \begin{subfigure}[c]{.4\linewidth}
            \captionsetup{width=.85\linewidth}
            \includegraphics[width=.95\textwidth]{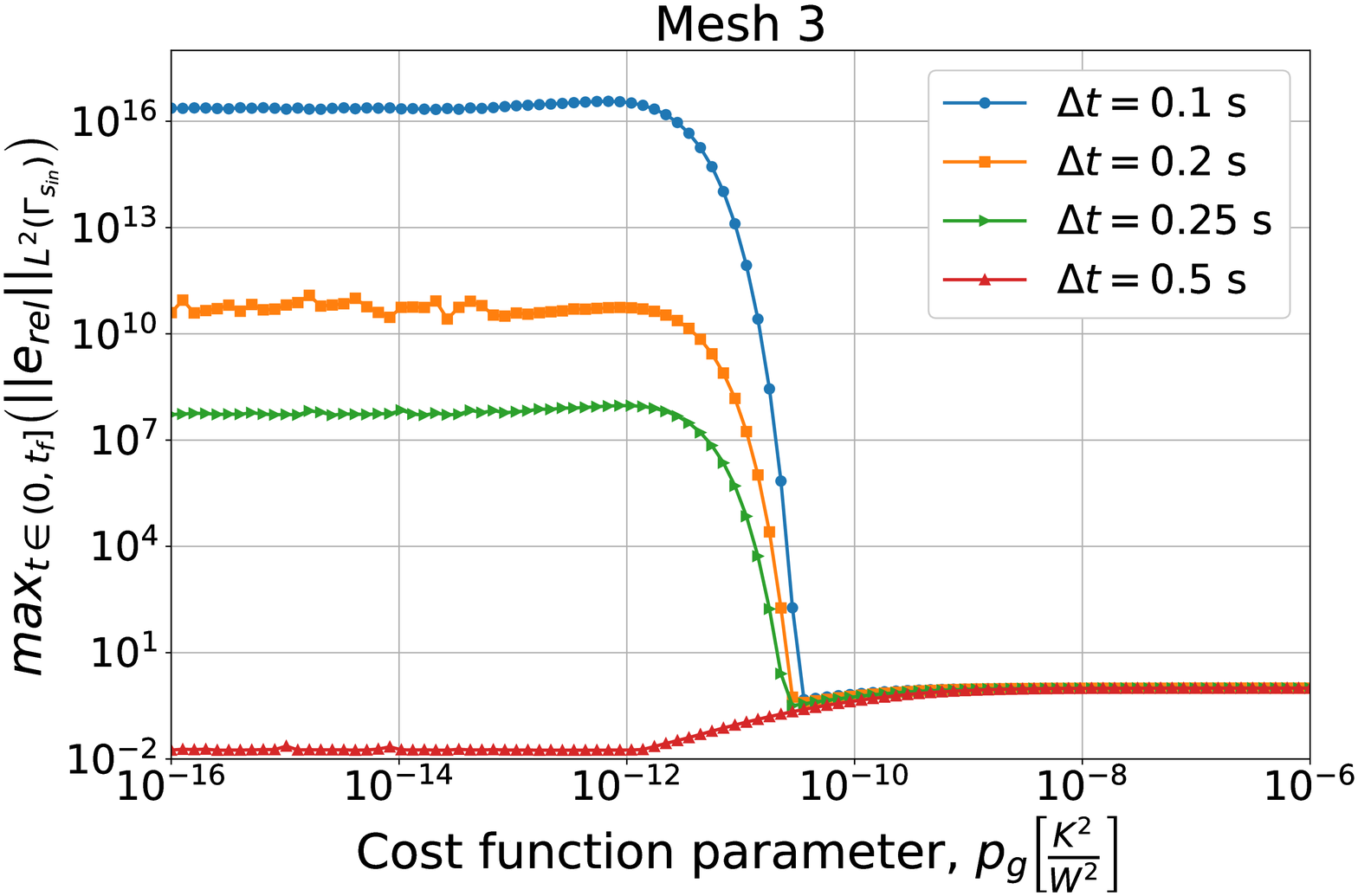}
            \caption{Max. relative error norm.}
        \end{subfigure}%
    \end{subfigure}%
    \caption{Mean (a) and maximum (b) values of the $L^2$-norm of the relative error, $e_{rel}$, in the interval $(0, t_f]$, for Benchmark 2 as the value of the cost function parameter, $p_g$, changes for Algorithm~\ref{alg:inverseSolver_linear_heatNorm} (piecewise linear time approximation of the heat flux).
    We show the results for Mesh 3 and different $\Delta t$.}
    \label{fig:unsteadyNumericalBenchmarkInverseNonLinear_linear_costFunction}
\end{figure}

\begin{figure}[!htb]
    \begin{subfigure}{\linewidth}
        \centering
        \begin{subfigure}[c]{.4\linewidth}
            \captionsetup{width=.85\linewidth}
            \includegraphics[width=.95\textwidth]{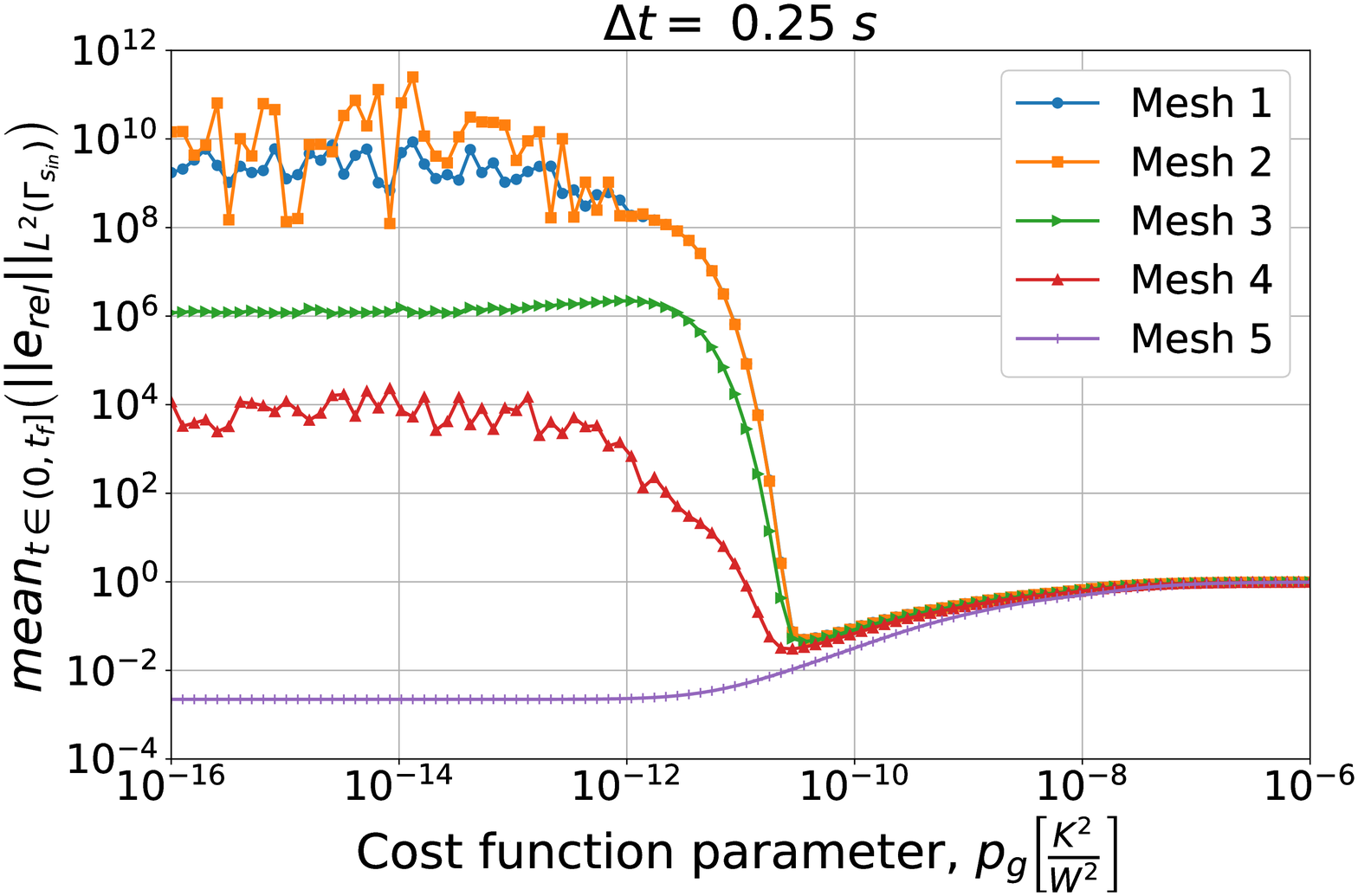}
            \caption{Mean relative error norm.}
        \end{subfigure}%
        \begin{subfigure}[c]{.4\linewidth}
            \captionsetup{width=.85\linewidth}
            \includegraphics[width=.95\textwidth]{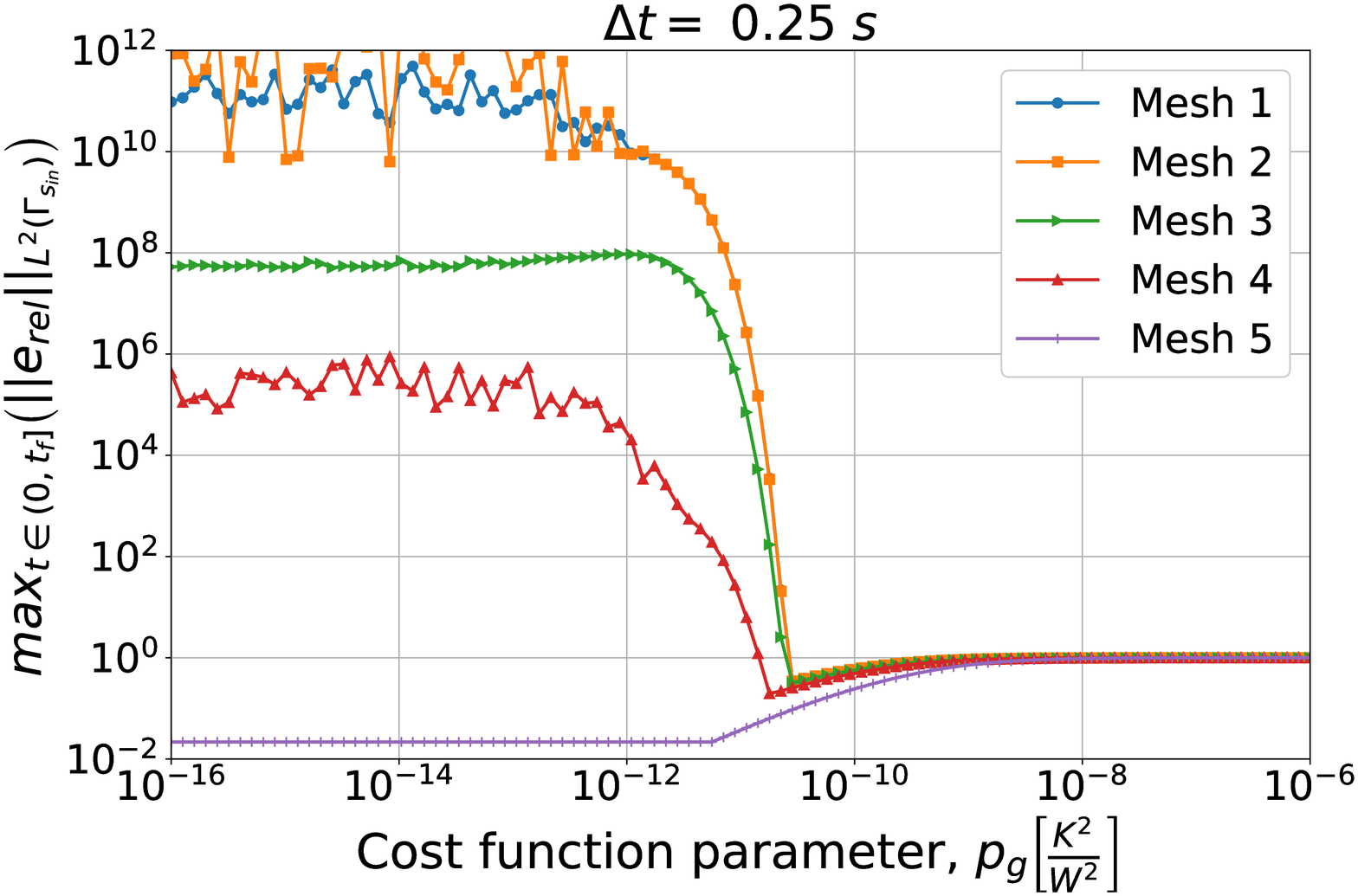}
            \caption{Max. relative error norm.}
        \end{subfigure}%
    \end{subfigure}%
    \caption{Mean (a) and maximum (b) values of the $L^2$-norm of the relative error, $e_{rel}$, in the interval $(0, t_f]$, for Benchmark 2 as the value of the cost function parameter, $p_g$, changes.
    The results are obtained using Algorithm~\ref{alg:inverseSolver_linear_heatNorm} (piecewise linear time approximation of the heat flux).
    We show the results for $\Delta t = 0.25s$ and different meshes.}
    \label{fig:unsteadyNumericalBenchmarkInverseNonLinear_linear_costFunction_meshes}
\end{figure}

Again, the results are very similar to those of the previous benchmark.
On one hand, the coarsest discretizations show a similar behavior to the piecewise constant approximation case with a monotonic degradation of the performance as $p_g$ increases.
On the other hand, we have unstable solutions for small $p_g$ that are stabilized by $p_g \gtrsim 5e-11 \frac{K^2}{W^2}$.
However, the accuracy of these solution rapidly decreases as we further increase $p_g$ until we reach the $100\%$ relative error plateau.

To conclude, we test the mesh, $\Delta t$ and $p_g$ selection method of Algorithm~\ref{alg:meshSelection}.
In this test, we input to the algorithm the virtual thermocouples measurements that we compute for this benchmark case.
Before presenting the results of this algorithm, we show in Figure~\ref{fig:unsteadyNumericalBenchmarkInverseNonLinear_linear_costFunction_measDiscrepancy} the mean value of $S_1^k$, $m_S$, as function of $p_g$ for different meshes and $\Delta t$.

\begin{figure}[!htb]
    \begin{subfigure}{\linewidth}
        \centering
        \begin{subfigure}[c]{.4\linewidth}
            \captionsetup{width=.85\linewidth}
            \includegraphics[width=.95\textwidth]{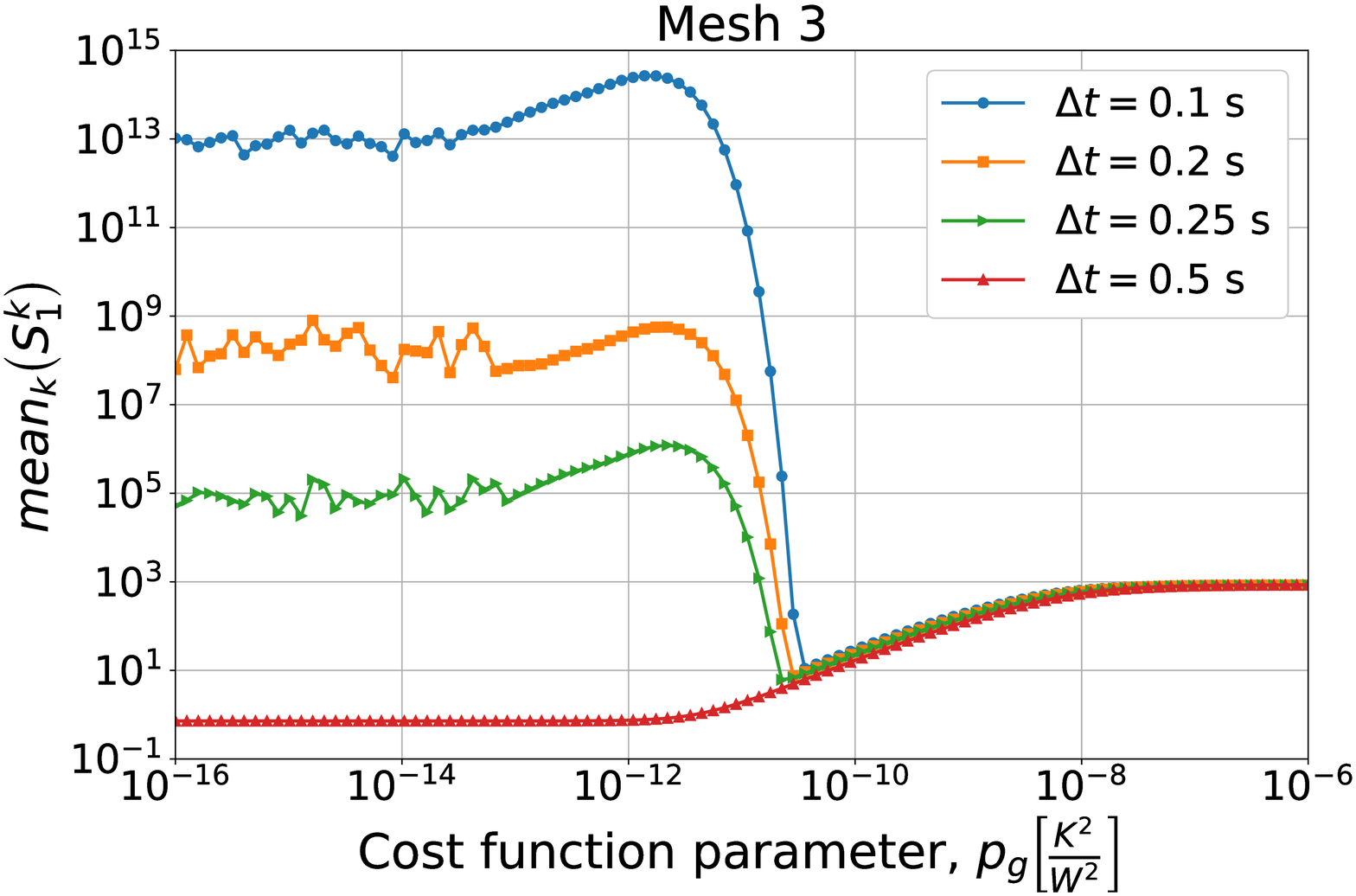}
            \caption{Mean of $S_1^k$ for Mesh 3.}
        \end{subfigure}%
        \begin{subfigure}[c]{.4\linewidth}
            \captionsetup{width=.85\linewidth}
            \includegraphics[width=.95\textwidth]{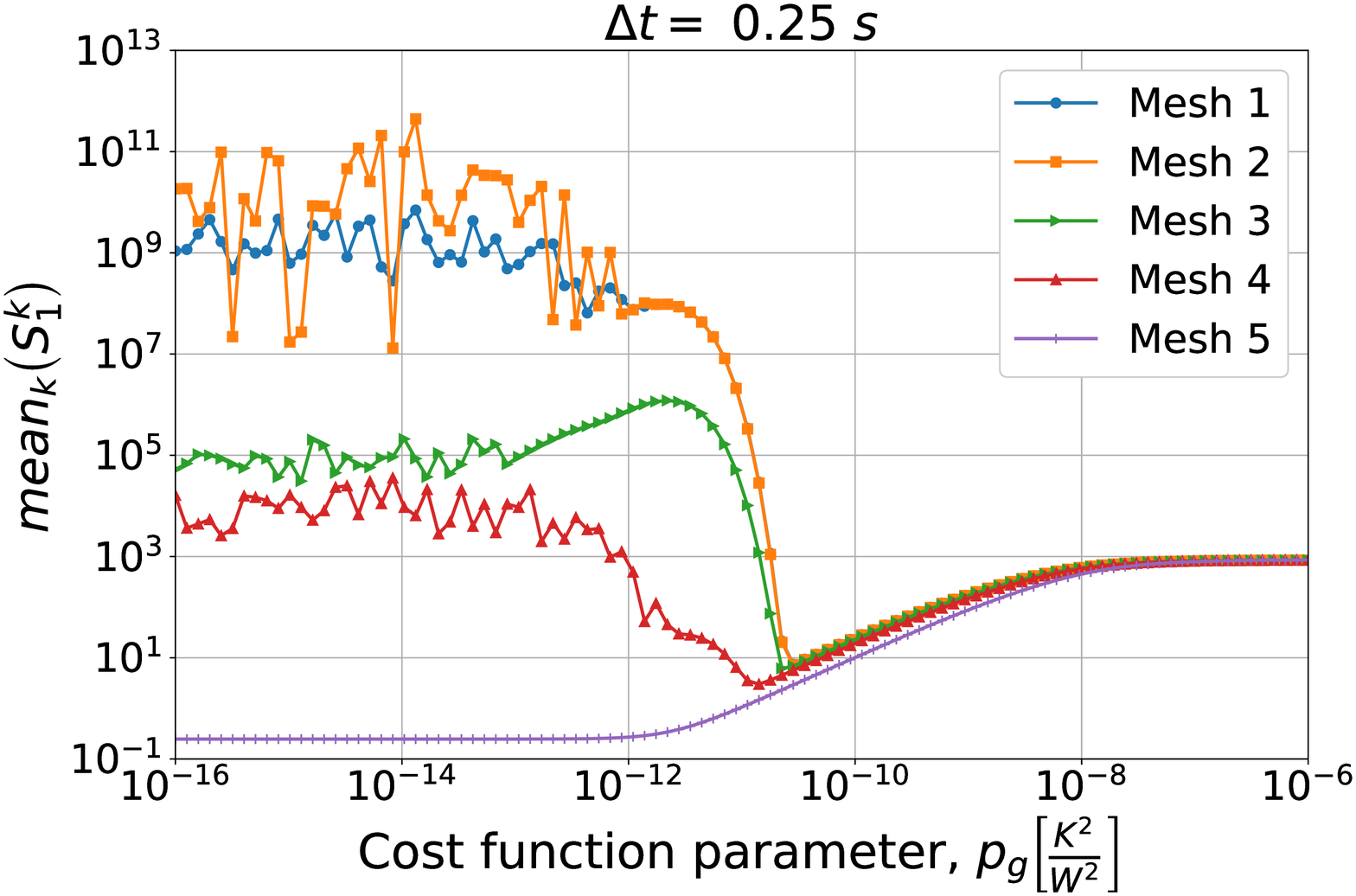}
            \caption{Mean of $S_1^k$ for $\Delta t = 0.25s$.}
        \end{subfigure}%
    \end{subfigure}%
    \caption{Mean values of $S_1^k$ for $1 \leq k \leq P_t$, for Benchmark 2 as the value of the cost function parameter, $p_g$, changes.
    The results are obtained using Algorithm~\ref{alg:inverseSolver_linear_heatNorm} (piecewise linear time approximation of the heat flux).
    We show the results for Mesh 3 and different $\Delta t$ in (a), and for $\Delta t = 0.25$ and different meshes in (b).}
    \label{fig:unsteadyNumericalBenchmarkInverseNonLinear_linear_costFunction_measDiscrepancy}
\end{figure}

Notice that $m_S$ has a behavior that is very similar to the relative error norm shown in Figures~\ref{fig:unsteadyNumericalBenchmarkInverseNonLinear_linear_costFunction} and \ref{fig:unsteadyNumericalBenchmarkInverseNonLinear_linear_costFunction_meshes}.
Moreover, its minima are almost correspondent to the relative error ones.
As already mentioned, we used these results as a guideline in developing Algorithm~\ref{alg:meshSelection}.
We present its behavior for the present benchmark case in Table~\ref{tab:unsteadyNumericalBenchmarkInverse_nonLinear_meshSelection}.

\begin{table}[htb]
\centering
    \caption{Test of Algorithm~\ref{alg:meshSelection} for Benchmark 2.}
    \label{tab:unsteadyNumericalBenchmarkInverse_nonLinear_meshSelection}
    \begin{tabular}{ |l|c|c|c|c|}
        \hline
        \textbf{Iteration}    &   \textbf{Mesh}    &   $\Delta t~[s]$    &   $p_g~\left[ \frac{K^2}{W^2} \right]$    &   $mean_k \left( S_1^k \right)~\left[ K^2 \right]$\\
        \hline
        0   &   4   &   $0.1$ & $8e-7$      &    $8.46e2 $\\   
        1   &   5   &   $0.5$ & $3.6e-11$   &    $6.2e0$\\   
        2   &   5   &   $0.5$ & $3.2e-13$   &    $5.5e-1$\\   
        \hline
    \end{tabular}
\end{table}

In this case, the algorithm does not select since the beginning the coarsest discretization.
In the first iteration, it finds the $p_g$ that minimizes $m_S$ for this setup.
Then, when comparing it to the other discretizations in step~\ref{alg:meshSelection_chooseMeshDt}, it selects again the coarsest one.
Also in this benchmark case, this algorithm showed to be able  to select the discretization setup and the value of $p_g$ corresponding to the best performance of the inverse solver.

\subsubsection{Effects of Measurements Noise and Regularization}\label{sec:unsteadyBenchmark_nonlinear_FOM_noise}

In this section, we test the effect that adding noise to the measurements vector, $\hat{\mathbf{T}}$, has in the performances of Algorithm~\ref{alg:inverseSolver_constant} and \ref{alg:inverseSolver_linear}.
From the industrial point of view, this analysis is of particular interest for our application since in the real case, thermocouples measurements are affected by noise.

We perform this analysis by adding to the measurements vector the Gaussian random noise $\pmb{\eta} = \mathcal{N}(\pmb{\mu}, \Sigma)$, where $\pmb{\mu} \in \R^M$ is the mean vector and $\Sigma \in \M^{M \times M}$ is the covariance matrix.
Then, we have
\begin{equation}
    \hat{\mathbf{T}}^k_\eta = \hat{\mathbf{T}}^k + \pmb{\eta}.
\end{equation}
In particular, we choose $\pmb{\eta}$ to be a independent and identically distributed random variable with zero mean, i.e. $\pmb{\eta} = \mathcal{N}(\mathbf{0}, \omega^2 I)$ , where $\omega$ denotes the noise standard deviation.
To study the effect of noise, we perform several solutions of the inverse problem using $\hat{\mathbf{T}}^k_\eta$ as thermocouples measurements. 
For each test, we compute 200 samples.

We show in Figures~\ref{fig:benchmark2_nonlinear_noiseEffect_constant} and \ref{fig:benchmark2_nonlinear_noiseEffect_linear} the obtained results for the piecewise constant and linear algorithm, respectively. 
In particular, we illustrate for each of them the mean values over the samples of the mean and maximum of the relative error (\ref{eq:unsteadyBenchmark_relativeError}) in $(0,t_f]$ (with 90\% quantile bars) for different values of the noise standard deviation, $\omega$.
The figure compares the results obtained using LU with full pivoting and TSVD with different values for the regularization parameter $\alpha_{TSVD}$.

\begin{figure}[!htb]
    \begin{subfigure}{\linewidth}
        \centering
        \begin{subfigure}[c]{.4\linewidth}
            \includegraphics[width=.95\textwidth]{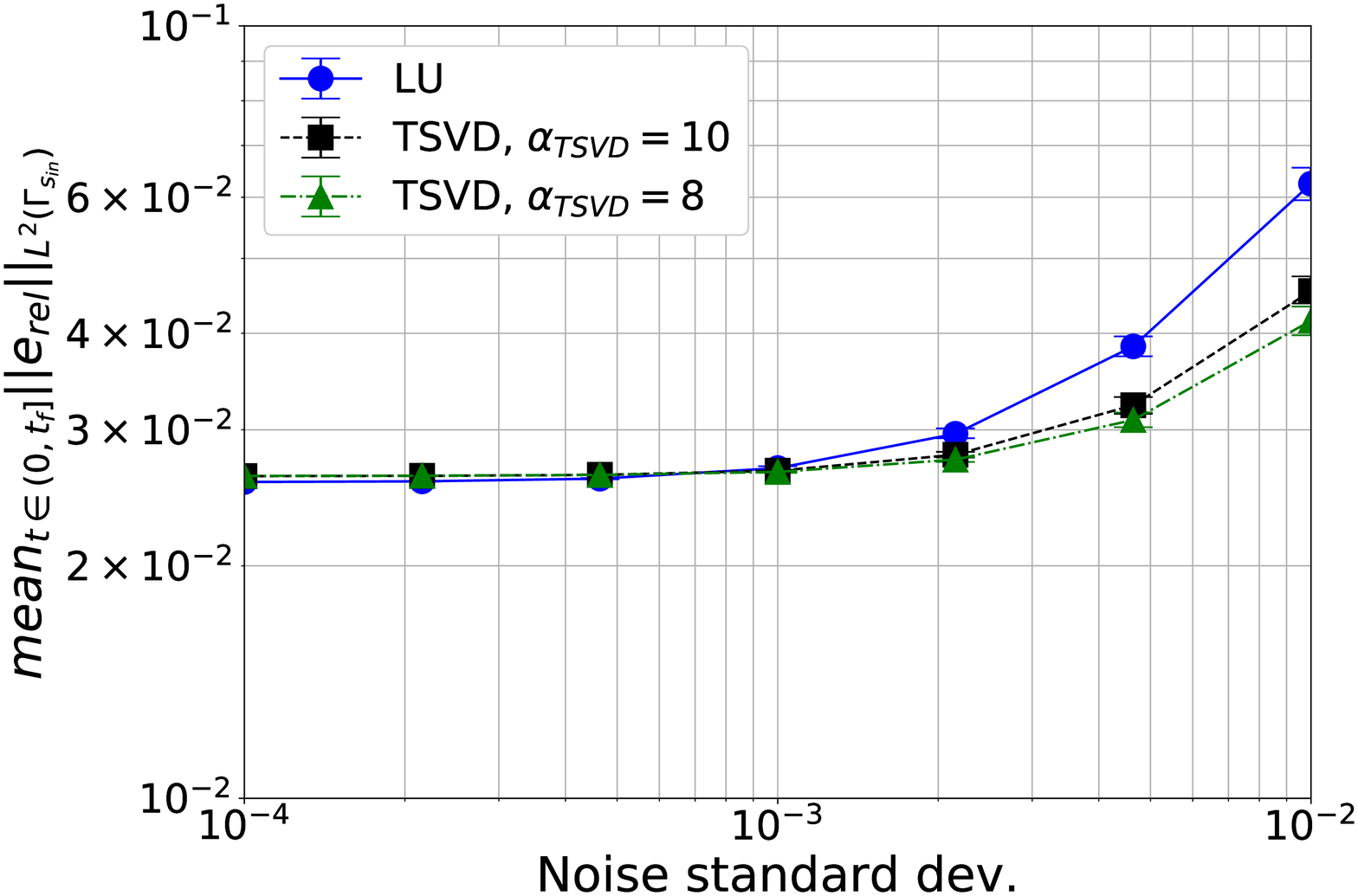}
            \caption{Mean.}
        \end{subfigure}%
        \begin{subfigure}[c]{.4\linewidth}
            \includegraphics[width=.95\textwidth]{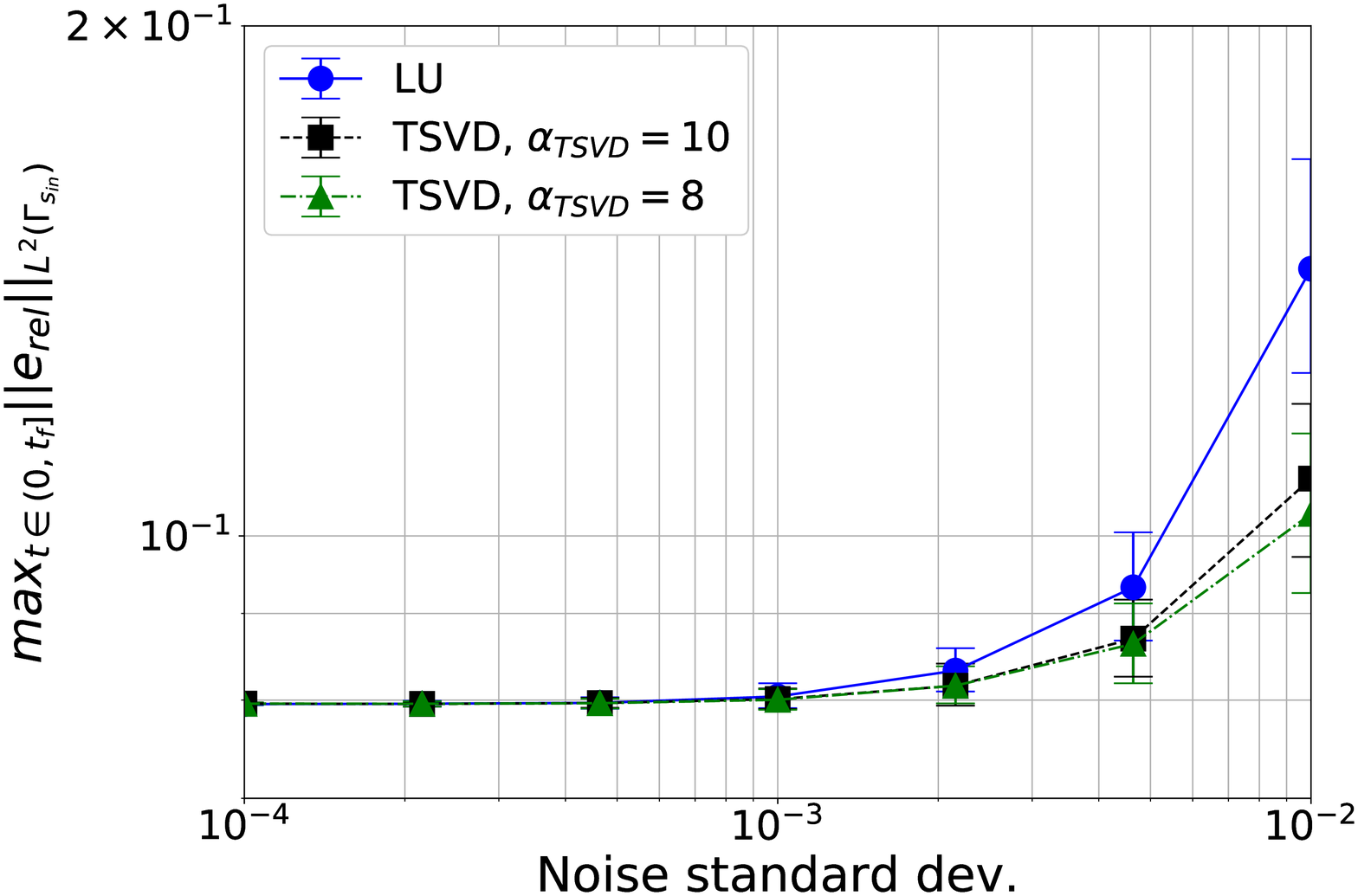}
            \caption{Maximum.}
        \end{subfigure}%
    \end{subfigure}%
    \caption{Effect of the noise in the temperature measurements for Algorithm~\ref{alg:inverseSolver_constant} in Benchmark 2.
    In the figures, we show the mean (a) and maximum (b) values of the relative error (\ref{eq:unsteadyBenchmark_relativeError}) in $(0,t_f]$ for different values of the noise standard deviation and using both LU with full pivoting and TSVD for the solution of inverse problem linear system (\ref{eq:linSys_parametrizedBC_sequential_constant}).
    For each case, we performed 200 runs.
    The markers show the mean values while the bars are the 90\% quantiles. 
    In these computations, we considered $p_g = 0 \frac{K^2}{W^2}$.}
    \label{fig:benchmark2_nonlinear_noiseEffect_constant}
\end{figure}

\begin{figure}[!htb]
    \begin{subfigure}{\linewidth}
        \centering
        \begin{subfigure}[c]{.4\linewidth}
            \includegraphics[width=.95\textwidth]{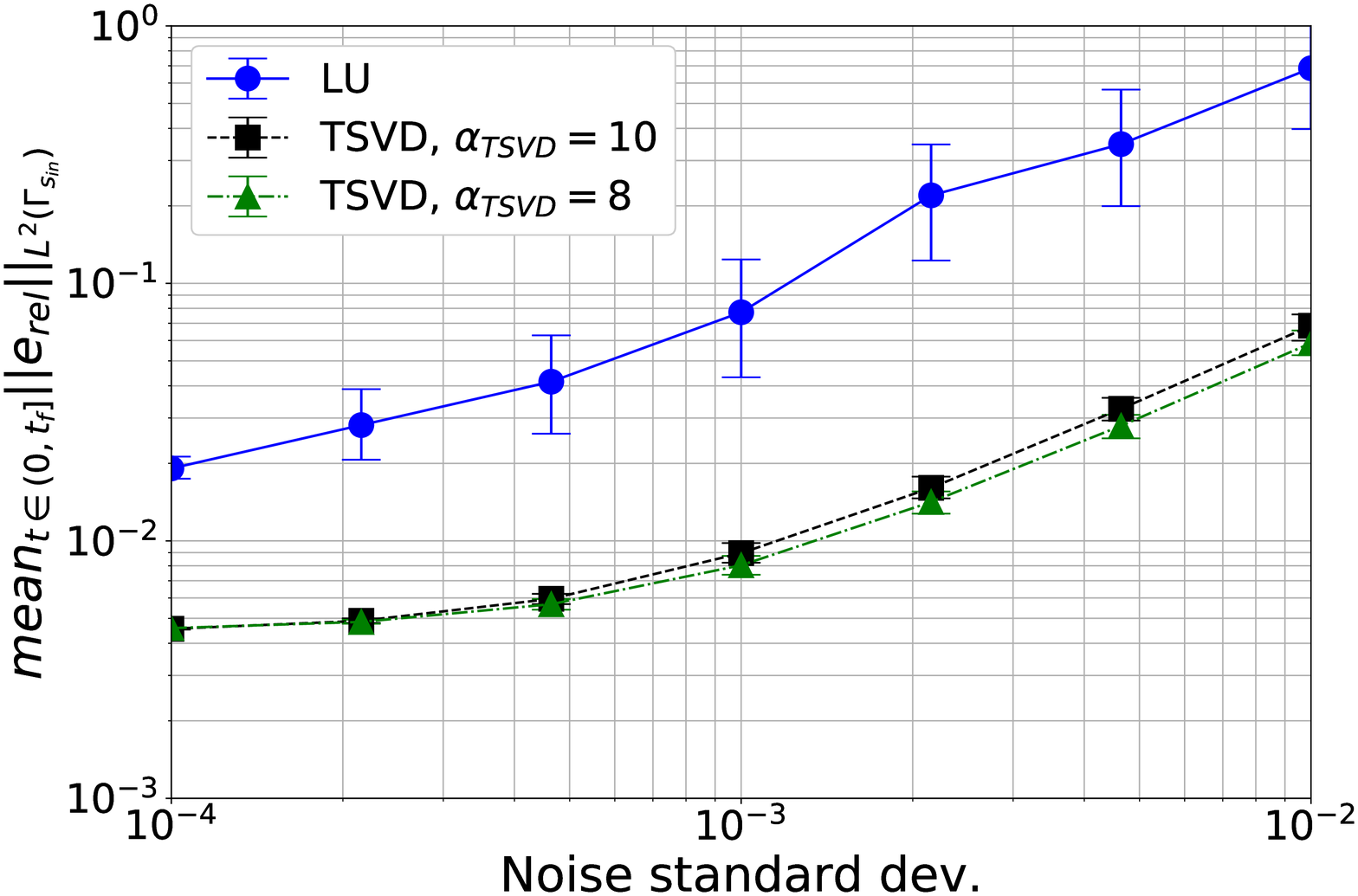}
            \caption{Mean.}
        \end{subfigure}%
        \begin{subfigure}[c]{.4\linewidth}
            \includegraphics[width=.95\textwidth]{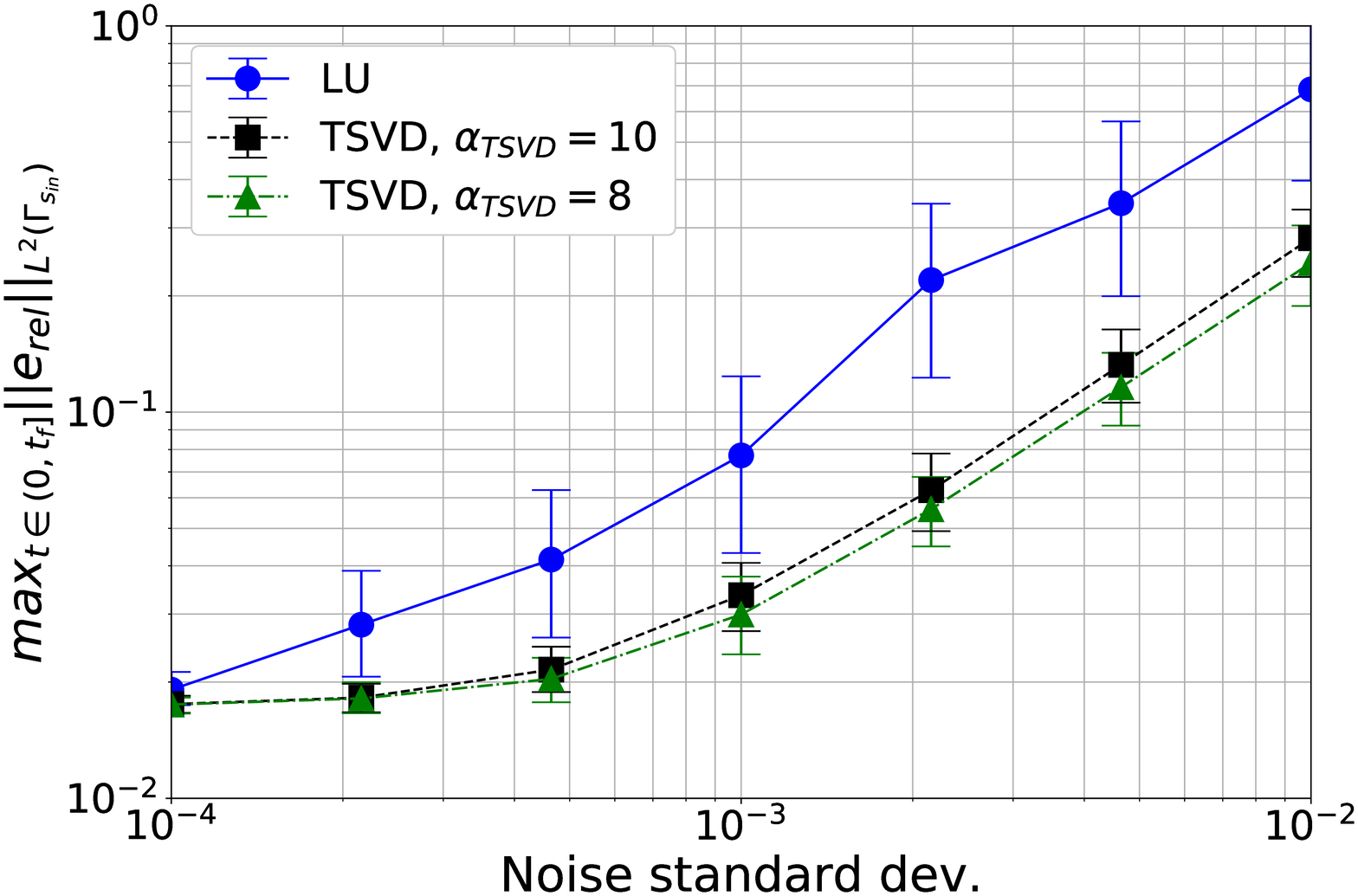}
            \caption{Maximum.}
        \end{subfigure}%
    \end{subfigure}%
    \caption{Effect of the noise in the temperature measurements for Algorithm~\ref{alg:inverseSolver_linear} in Benchmark 2.
    In the figures, we show the mean (a) and maximum (b) values of the relative error (\ref{eq:unsteadyBenchmark_relativeError}) in $(0,t_f]$ for different values of the noise standard deviation and using both LU with full pivoting and TSVD for the solution of inverse problem linear system (\ref{eq:linSys_parametrizedBC_sequential_linear}).
    For each case, we performed 200 runs.
    The markers show the mean values while the bars are the 90\% quantiles. 
    In these computations, we considered $p_g = 0 \frac{K^2}{W^2}$.}
    \label{fig:benchmark2_nonlinear_noiseEffect_linear}
\end{figure}

The results show a very different dependency from the measurement noise in the two algorithm.
The piecewise constant Algorithm~\ref{alg:inverseSolver_constant} shows in Figure~\ref{fig:benchmark2_nonlinear_noiseEffect_constant} to be quite robust with respect to these levels of noise.
Moreover, the TSVD is effective in reducing the noise propagation and we are able to keep a reasonable level of accuracy.

On the other hand, the piecewise linear Algorithm~\ref{alg:inverseSolver_linear} is much more affected by the noise.
By using the TSVD regularization, we have an improvement of the noise robustness.
However, the error rate of increase is much higher than for the piecewise constant solver.

\subsubsection{Computational Cost}

To conclude this analysis, we present in Tables~\ref{tab:computationalCostConstant} and \ref{tab:computationalCostLinear} the numerical cost of performing one iteration of the proposed inverse solvers.
Notice that all the computations were performed in serial on a Intel\textsuperscript{\textregistered} Core\textsuperscript{\texttrademark} i7-8550U CPU processor.
As expected the required CPU time increases with the refinement of the discretization.
Since we are using relative coarse meshes due to the simplified geometry, the computational cost in many cases meets the real-time requirement for this application (i.e. $1$ s).
However, the meshes required for the discretization of the real mold geometry are such that we cannot ensure real-time performances in these cases.

\begin{table}[htb]
\centering
    \caption{Average computational cost for one iteration of Algorithm~\ref{alg:inverseSolver_constant} in Benchmark 2.}
    \label{tab:computationalCostConstant}
    \begin{tabular}{ |l|c|c|c|c|c|}
        \hline
        \diagbox{$\Delta t$}{\textbf{Mesh}}   &   \textbf{Mesh 1}    &   \textbf{Mesh 2}    &   \textbf{Mesh 3}    &   \textbf{Mesh 4}    &   \textbf{Mesh 5}    \\    
        \hline
        $0.1$ s &   $5184.0$ ms    &   $1402.6$ ms   &  $815.6$ ms    &  $482.8$ ms    &  $343.4$ ms    \\   
        $0.2$ s &   $2520.1$ ms    &   $741.2$ ms    &  $427.4$ ms    &  $260.2$ ms    &  $192.6$ ms    \\
        $0.25$ s &  $2052.9$ ms    &   $600.6$ ms    &  $350.9$ ms    &  $215.8$ ms    &  $162.1$ ms    \\
        $0.5$ s &   $1115.5$ ms    &   $333.4$ ms    &  $197.6$ ms    &  $128.2$ ms    &  $101.0$ ms    \\
        \hline
    \end{tabular}
\end{table}

\begin{table}[htb]
\centering
    \caption{Average computational cost for one iteration of Algorithm~\ref{alg:inverseSolver_linear} in Benchmark 2.}
    \label{tab:computationalCostLinear}
    \begin{tabular}{ |l|c|c|c|c|c|}
        \hline
        \diagbox{$\Delta t$}{\textbf{Mesh}}   &   \textbf{Mesh 1}    &   \textbf{Mesh 2}    &   \textbf{Mesh 3}    &   \textbf{Mesh 4}    &   \textbf{Mesh 5}    \\    
        \hline
        $0.1$ s &   $5966.4$ ms    &   $1621.2$ ms   &  $1442.5$ ms   &  $517.3$ ms    &  $400.1$ ms    \\   
        $0.2$ s &   $2836.2$ ms    &   $907.4$ ms    &  $518.1$ ms    &  $293.4$ ms    &  $206.2$ ms    \\
        $0.25$ s &  $2258.8$ ms    &   $707.6$ ms    &  $382.5$ ms    &  $232.4$ ms    &  $182.9$ ms    \\
        $0.5$ s &   $1195.8$ ms    &   $435.7$ ms    &  $216.8$ ms    &  $140.2$ ms    &  $108.0$ ms    \\
        \hline
    \end{tabular}
\end{table}

\section{Conclusions and Future Works}
\label{section:conclusions}

The goal of the present investigation was to develop mathematical tools to monitor the mold behavior in CC machineries. 
At industrial experts' suggestion, we identified the mold-steel heat flux as the quantity of interest for the mold monitoring. 
Then, our objective has been to investigate a methodology for the real-time estimation of this heat flux having as data the physical properties of the mold, its geometry, the cooling
water temperature and some pointwise temperature measurements in the interior of the mold plates.

We opted for stating the problem in a data assimilation, optimal control setting in which we look for the heat flux that minimizes a functional that includes a measure
of the distance between the computed and measured temperature at the measurement points. 
However, given a mold-steel heat flux, we required a mold model to compute the temperature at these points.

In deriving the mold model, we had to take into account the real-time requirement of this mold monitoring task. 
Then, we avoided to model all the mold region, including the steel pool. 
In fact, it would require us to model several complex and coupled physical phenomena (heat transfer, steel solidification, steel and cooling water fluid flows, etc.) happening on different time and space scales. 
With this level of complexity, the computational cost of such simulation would have been unsustainable for real-time computations. 
Thus, in modeling the mold, we considered as computational domain the mold plates only. 
Hence, our first task has been the derivation of the three dimensional unsteady-state heat conduction mold model.

Once the mold model has been established, we focused on the mold-steel heat flux estimation. 
Notice that, in this setting, this flux is a Neumann BC on a portion of the boundary of our domain. 
Then, we can generalize this mathematical inverse problem as the estimation of a Neumann BC given pointwise state measurements in the interior of the domain.

In this unsteady-state setting, we used a sequential approach to the inverse problem. 
In fact, to provide a real-time solution in this setting means to stay always at the front of the time line as it stretches. 
Then, since our measurements come equally spaced in time by one second, we considered the problem of estimating the heat flux only in between the last measurement and the previous one, assuming to have already the solution for older times.

In this framework, we stated two different inverse problems. 
One looking for the heat flux that minimizes a measure of the distance between computed and measured temperature only. 
While, in the other, we want to minimize this distance plus an heat flux norm.

For both these inverse problems, we developed novel methodologies for their solution that exploit a RBFs parameterization of the heat flux in space with time dependent coefficients.
With respect to these coefficients, we considered both the piecewise constant and the piecewise linear case. 
It means that the estimated heat flux is constant or linear in between two contiguous measurement instants.

These novel methodologies are direct methods that benefit from an offline-online decomposition. 
Thanks to this decomposition, we have a first computationally expensive offline phase, in which we solve several direct problems. 
This offline phase is computed once and for all and does not require any measurement. 
Then, when the caster starts to work, we only have to collect the thermocouples measurements and run the online phase which is computationally much cheaper.

To conclude, we tested the proposed inverse solvers on some benchmark cases.
We designed two benchmark cases for the inverse problem. 
To design an inverse problem test, we arbitrarily selected a mold-steel heat flux. 
We solved the direct problem for this heat flux obtaining the corresponding temperature field in the mold. 
Finally, we located some virtual thermocouples and used the computed temperature at these points as input data for the inverse solvers. 
Then, we tested their ability to reconstruct the previously selected heat flux.
The two benchmark cases share the same domain and physical parameters which were chosen to resemble the industrial setting. 
The difference is in the selected heat fluxes.

From the obtained results, we noticed a great difference in the behavior of the piecewise constant and linear inverse solvers. 
The former showed a very stable behavior and insensitivity to the time and space discretization used. 
The latter, rather, is very much influenced by the discretization used. 
In particular, it can be very unstable when using fine discretizations but this instability is reduced by coarsening the time and/or space discretization. 
In fact, for some discretizations, we achieved very stable and accurate solutions, eventually.

We also tested the effects that adding the heat flux norm to the minimization functional has on these inverse solvers. 
We implemented this new term multiplying it by a parameter. 
Then, we tested the effect that its value has on the solvers performance.

We noticed that the piecewise constant algorithm performance monotonically deteriorates as this parameter increases. 
The same goes for the piecewise linear solver when using the coarsest, stable discretizations. 
However, the unstable configurations showed to be positively affected by the addition of this new term and, for some values of this parameter, we were able to obtain stable and accurate solutions for all the tested discretizations. 
While, for too high values of the parameter, the solution is stable but inaccurate for all the meshes and timestep sizes. 
Moreover, we showed that, for values of the parameter above a threshold, the inverse solver performance is almost independent from the discretization.

Due to this dependency from the discretization and the functional parameter, we developed an algorithm for the automatic selection of these quantities.
In the numerical tests, it proved to be able of a selection that corresponds to a stable and accurate inverse solver.

Testing the inverse solvers for several noise levels showed again a different behavior between the piecewise constant and linear approximations.
The former is much less sensitive to the measurements noise than the latter.
For both, the TSVD regularization proved to be able to mitigate the noise propagation and we were able to obtain accurate and stable solutions also in the presence of noise.

To conclude, we recall that the online phases of the proposed algorithms require the solution of a full order problem whose computational cost depends on the mesh and timestep size.
As shown in the numerical tests section, it means that we cannot ensure real-time performance for these algorithms as they are.
Then, in our future work, we will develop model order reduction techniques that will allow us to reduce the online phase computational time and make it independent from the discretization.\cite{Strazzullo2022, Negri2013}

As a final remark, we discuss the application of the new proposed methodology to other problems. 
Recalling that the presented continuous casting problem is a Neumann BC estimation problem in a unsteady linear setting with pointwise state measurements in the interior of the domain, we can apply the proposed methodologies to any problem sharing these features. 
An example can be a boundary stress estimation problem in linear elasticity with pointwise deformation measurements.

Other possible future works on the subject could be related to the study of theoretical results that can ensure a priori the stability and accuracy of these inverse solvers with respect to the used discretization.
It would increase the potential of the proposed methodologies as well as their reliability. 
In particular, it would be useful for the final user to know a priori the time and space discretization to select as well as the minimization functional parameter. 
Notice that it is needed for the piecewise linear inverse solver because the piecewise constant one is almost insensible to the discretization refinement.

In the future, it would also be interesting to investigate the use of a completely different approach in the solution of this inverse problem. 
In particular, thinking about a more proper handling of the measurement noise, we could think of using a Bayesian approach.\cite{Stabile2020b} 
Techniques such as ensemble Kalman filter could be suitable for this problem given the sequentiality of the measurements. 
Moreover, considering the real-time requirement of the application, it would probably require an effective use of model order reduction techniques to reduce the demanding computational cost of these techniques.

\section{Acknowledge}
We would like to acknowledge the financial support of the European Union under the Marie Sklodowska-Curie Grant Agreement No. 765374.
We also acknowledge the partial support by the Ministry of Economy, Industry and Competitiveness through the Plan Nacional de I+D+i (MTM2015-68275-R), by the Agencia Estatal de Investigacion through project [PID2019-105615RB-I00/ AEI / 10.13039/501100011033], by the European Union Funding for Research and Innovation - Horizon 2020 Program - in the framework of European Research Council Executive Agency:  Consolidator Grant H2020 ERC CoG 2015 AROMA-CFD project 681447 "Advanced Reduced Order Methods with Applications in Computational Fluid Dynamics" and  INDAM-GNCS project "Advanced intrusive and non-intrusive model order reduction techniques and applications", 2019.
Moreover, we gratefully thank Gianfranco Marconi, Federico Bianco and Riccardo Conte from Danieli \& C.Officine Meccaniche SpA for helping us in better understanding the industrial problem and for the fruitful cooperation.

\bibliographystyle{ama}
\bibliography{references}%

\end{document}